\DeclareMathOperator{\Aut}{Aut}
\DeclareMathOperator{\id}{id}
\DeclareMathOperator{\lsp}{span}
\newtheorem{thm}{Theorem}[section]
\newtheorem{cor}[thm]{Corollary}
\newtheorem{lm}[thm]{Lemma}
\newtheorem{prop}[thm]{Proposition}
\newtheorem{defn}[thm]{Definition}
\newtheorem{rem}[thm]{Remark}
\newtheorem{exam}[thm]{Example}
\newtheorem{conj}[thm]{Conjecture}
\numberwithin{equation}{section}
\newcommand{\Ima}{\textup{Im}\:}
\newcommand{\F}{{\mathcal F}}
\newcommand{\G}{{\mathcal G}}
\newcommand{\N}{{\mathbb N}}
\newcommand{\Za}{{\mathrm Z}}
\newcommand{\A}{\mathbf{A}}
\newcommand{\Ba}{\mathbf{B}}
\newcommand{\zd}{C_2}
\DeclareMathOperator{\Spec}{Spec}
\DeclareMathOperator{\Ann}{Ann}
\DeclareMathOperator{\charac}{char}
\DeclareMathOperator{\mispan}{span}
\def\a{\alpha}
\def\l{\lambda}
\def\b{\beta}
\def\g{\gamma}
\begin{document}
	
	\author[Kaygorodov]{Ivan Kaygorodov}
	\address{[I.\ Kaygorodov] 
		Centro de Matemática e Aplicações, Universidade da Beira Interior, Covilh\~{a}, Portugal; \ 
		 Moscow Center for Fundamental and Applied Mathematics,      Russia; \
		Saint Petersburg  University, Russia.}	
	\email{kaygorodov.ivan@gmail.com}
	
	\author[Mart\'{i}n]{C\'{a}ndido Mart\'{i}n Gonz\'{a}lez}
	\address{[C. Mart\'{\i}n Gonz\'{a}lez] 
		Departamento de \'Algebra Geometr\'{\i}a y Topolog\'{\i}a, Fa\-cultad de Ciencias, Universidad de M\'alaga, Campus de Teatinos s/n, 29071 M\'alaga, Spain.} 
	\email{candido\_m@uma.es}

	\author[P\'aez-Guill\'an]{Pilar P\'aez-Guill\'an}
	\address{[P.\ P\'aez-Guill\'an] Fakultät für Mathematik, Universität Wien, Oskar-Morgenstern-Platz 1, 1090 Wien, Austria}
	\email{maria.pilar.paez.guillan@univie.ac.at}

	\title[Central extensions of axial algebras]{Central extensions of axial algebras}

	\begin{abstract}
		In this article, we develop a further adaptation of the method of Skjelbred-Sund to construct central extensions of  axial algebras.
		We use our method to prove that all axial central extensions (with respect to a maximal set of axes) of complex simple finite-dimensional Jordan algebras are split, and that all non-split axial central extensions of dimension $n\leq 4$ over an algebraically closed field of characteristic not $2$ are Jordan. Also, we give a classification of $2$-dimensional axial algebras and describe some important properties of these algebras.
	\end{abstract}
	
	\subjclass[2020]{17A36,17A60,17C20,17C27,17D99}
	\keywords{Axial algebra; central extension; Jordan algebra; classification.}
	
	\maketitle

	\section*{Introduction}
	Axial algebras are a recent class of non-associative commutative algebras introduced by Hall,
	Rehren and Shpectorov~\cite{hrs} in 2015. They can be seen as a certain generalisation of commutative, associative algebras, and as a common frame for Majorana algebras~\cite{hrs,why}, Jordan algebras~\cite{hrs2,hss,ms22} and other types of algebras appearing in mathematical physics. They are also related to code algebras~\cite{cmi}.

	The relevance of Majorana and axial algebras lies on the fact that they provide an axiomatic approach to some properties of vertex operator algebras (VOAs), complex algebraic structures rooted in theoretical physics. In mathematics, the best-known VOA is the moonshine $V^{\#}$, constructed by  Frenkel, Lepowsky and
	Meurman in~\cite{flm}, and whose automorphism
	group is the Monster $M$, the largest sporadic finite simple group. This object shows a link to the theory of modular functions, and was key in the proof of Borcherds~\cite{bor92} of the monstrous moonshine conjecture on the connection between the Monster and modular functions. The rigorous development of the theory of VOAs, an important tool for the proof, is also due to Borcherds~\cite{bor}.
	
	After the cited paper of Hall, Rehren and Shpectorov~\cite{hrs}, it began a systematic study of axial algebras. 
	An interesting and active direction in this study is the description of $n$-generated axial algebras of a certain type.
	So,  two-generated primitive axial algebras of Jordan type $\eta$ over fields of characteristic different from  $2$ were classified in~\cite{hrs2} by Hall, Rehren and Shpectorov.
	Rehren proved in \cite{reh1,reh2} that the dimension of  two-generated primitive axial algebras of Monster type $\left(\alpha, \beta\right)$ does not exceed $8$ if the characteristic of the ground field is not $2$ and $\alpha \notin \left\{2\beta, 4\beta\right\}$.
	Later, Franchi, Mainardis  and  Shpectorov constructed  an infinite dimensional two-generated primitive axial algebra of Monster type $\left(2, \frac{1}{2}\right),$ today  known as {\it Highwater algebra} \cite{HW},
	and they classified all  two-generated primitive axial algebras of Monster type $\left(2\beta,\beta\right)$ over a field of characteristic not $2$ in \cite{HW2}. Also, a classification of primitive symmetric two-generated axial algebras of Monster type was given between Yabe's~\cite{Y}, Franchi and Mainardis'~\cite{fm5} and Franchi, Mainardis and McInroy's~\cite{fmm}.
	On the other hand, Gorshkov and Staroletov 
	showed that a three-generated primitive axial algebra of Jordan type has dimension at most $9$ \cite{gs}; Khasraw,  McInroy and Shpectorov enumerated all the three-generated primitive axial algebras of Monster type $\left(\alpha, \beta\right)$ of a certain subclass, the so-called $4$-algebras~\cite{kmis2}.

	We cite some other directions in the research on axial algebras. Khasraw,  McInroy and Shpectorov described the structure of axial algebras \cite{axial}. 
	De Medts and Van Couwenberghe   introduced axial representations of groups and modules over axial algebras as new tools to study axial algebras \cite{dmvc}. 
	Axial algebras have been also studied from a computational approach in McInroy and Shpectorov's~\cite{mis} (see also~\cite{seress}), and from a categorical point of view in De Medts, Peacock, Shpectorov and Van Couwenberghe's~\cite{dmpsvc}. Also, a non-commutative version of axial algebras was considered by 
	Rowen and Segev \cite{rs1,rs2,rs3}.
	
	On the other hand, 
	the study of algebras generated by idempotents has a proper interest.
	Rowen and Segev described all associative and Jordan algebras generated by two idempotents~\cite{RS};
	Brešar proved that a finite-dimensional (unital) algebra is zero product determined if and only
	if it is generated by idempotents~\cite{br}, and so on.

	The paper is organised as follows. The introductory Section~\ref{s:prel} provides some basic definitions about axial algebras. We also give a classification of complex two-dimensional axial algebras and describe some of the main properties of these algebras.
	Section~\ref{s:ce} is devoted to a detailed explanation of an adaptation of  the Skjelbred-Sund method~\cite{ss78} for constructing central extensions of axial algebras:
	we describe the conditions that ensure that a given central extension of an axial algebra will also be axial (Theorem~\ref{th:gen}) and prove that 
	an axial algebra with non-zero annihilator is a central extension of an axial algebra of smaller dimension
	(Theorem~\ref{th:imp}).
	In Section~\ref{s:jordan}, we apply the methods developed in Section~\ref{s:ce} to prove
	that  a complex  finite-dimensional simple Jordan algebra  does not have  non-split axial central extensions with respect to a  maximal set of axes, and also that all the axial central extensions of Jordan algebras of dimension $n\leq 4$ over an algebraically closed field of characteristic not $2$ are again Jordan algebras.
	
	Although all the examples throughout the paper are already known in the literature, it is our hope that this technique will allow us to find new examples in the near future.
	
	\
	
	Unless otherwise stated, all algebras and vector spaces throughout the paper are assumed to be of arbitrary dimension and over an arbitrary field $F$. The generating sets for the algebras are assumed to be finite. Also, we will denote the direct sum of vector spaces by $\dot{+}$, while $\oplus$ will be reserved to denote the direct sum of algebras. Finally, we will employ $\subseteq$ for denoting non-strict inclusions, and $\subset$ for the strict ones.

	\section{Preliminaries on axial algebras}\label{s:prel}

	Let $F$ be a field, $\mathcal{F}\subseteq F$ a subset, and $\star\colon \mathcal{F}\times \mathcal{F}\to 2^{\mathcal{F}}$ a symmetric binary operation. The pair $(\mathcal{F},\star)$ is called a fusion law over $F$, and will be denoted simply by $\mathcal{F}$ whenever there is not possibility of confusion. We say that the fusion law $(\mathcal{F},\star)$ is contained in the fusion law $(\mathcal{G},\odot)$ if $\mathcal{F}\subseteq \mathcal{G}$ and, for every $\lambda,\mu\in\mathcal{F}$, it holds that $\lambda\star \mu \subseteq \lambda\odot \mu$. Also, if $\mathcal{F}\subseteq\mathcal{G}$, we will denote by $(\mathcal{G},\star)$ the fusion law resulting from setting $\lambda\star\mu=\emptyset$ for every $\lambda\in\mathcal{G}\setminus\mathcal{F}$ and every $\mu\in\mathcal{G}$.
	
	The values of any fusion law $(\mathcal{F},\star)$ can be displayed in a symmetric square table. This is the most common way to represent them; we employ it in Tables~\ref{t:m} and~\ref{table:j12}. Following the conventions in the literature (e.g. \cite{hrs},\cite{hrs2}), we will abuse notation in the tables by not writing the set symbols in unitary sets, or using a blank entry to mean the empty set. On other occasions, we will limit to write explicitly the relevant products of the fusion laws, as in Tables~\ref{t:flax2} and~\ref{t:eax2}.
	
	Let $\A$ be a commutative algebra. For any element $x\in \A$, we denote by $\Spec(x)$ the spectrum of the endomorphism $L_x \colon \A\to\A$, $y\mapsto xy$, 
	and by $\A_{\lambda}^x$ the eigenspace associated with an eigenvalue $\lambda \in \Spec(x)$. If $\mu\notin \Spec(x)$, we assume $\A_{\mu}^x=0$. Also, given a subset $\mathcal{S}\subseteq \Spec(x)$, we denote $\A_{\mathcal{S}}^x=\dot{+}_{\lambda \in \mathcal{S}}\A_{\lambda}^x$ and $\A_{\emptyset}^x=\{0\}$.
	
	Let $(\mathcal{F},\star)$ be a fusion law over $F$. An element $a\in \A$ is called an $\mathcal{F}$-axis if the following conditions hold:
	
	\begin{enumerate}
		\item $a$ is idempotent;
		\item $a$ is semisimple;
		\item $\Spec(a)\subseteq\mathcal{F}$ and $\A_{\lambda}^a\A_{\mu}^a\subseteq \A_{\lambda\star \mu}^a$, for all $\lambda, \mu \in \Spec(a)$.
	\end{enumerate}
	
	Recall that, if $\A$ has finite dimension, by $a$ being semisimple we just mean that  $ L_a$ is diagonalisable. In the infinite-dimensional case, $a$ must satisfy the next two conditions:
	
	\begin{enumerate}
		\item[(i)] For every $x\in\A$, there exists a finite-dimensional subspace $W_x\subset \A$ stable by $ L_a$ such that $x\in W_x$.
		\item[(ii)] For every subspace $W\subset \A$ of finite dimension stable by $ L_a$, the restriction $ L_a\vert_W$ is  diagonalisable.
	\end{enumerate}
	
	An $\mathcal{F}$-axial algebra over $F $ is a pair $(\A,X)$, where $\A$ is a commutative algebra over $F$ and $X$ is a finite set of $\mathcal{F}$-axes that generate $\A$.
	If the fusion law is clear, we will simply refer to axes and axial algebras.

	We will now recall some basic definitions regarding axial algebras. For more information, see for instance \cite{hrs,hrs2,hss,axial,ms23}. 
	
	Note that, from conditions $(1)$ and $(3)$ above, any axis $a \in \A$ satisfies $\A_1^a\neq 0$. An axis $a\in \A$ is called primitive if $\dim\A_1^a=1$. If an axial algebra $(\A,{\rm X})$ is generated by primitive axes, then it is called primitive. In this case, $\A_{1}^a\A_{\lambda}^a\subseteq \A_{\lambda}^a$, for all $\lambda\in \Spec(a)$.
	
	A two-generated axial algebra $(\A,\{a_1,a_2\})$ is called symmetric if it admits a flip, i.e. if there exists an automorphism switching the generating axes $a_1$ and $a_2$.
	
	An axial algebra $(\A,{\rm X})$ is said to be $m$-closed if $\A$ is spanned by products of axes of ${\rm X}$ of length at most $m$.
	
	Also, we say that an axial algebra $(\A,{\rm X})$ admits a Frobenius form if there exists a (non-zero) bilinear form $(\cdot,\cdot)\colon \A\times\A\to F$ which associates with the product of $\A$, i.e. \[(x,yz)=(xy,z)\] for all $x,y,z\in\A$. Note that this form is necessarily symmetric  \cite[Proposition 3.5]{hrs}.
	
	The radical $R(\A,{\rm X})$ of a primitive axial algebra $(\A,{\rm X})$ is the unique largest ideal of $\A$ containing no axes from ${\rm X}$. If $(\A,{\rm X})$ admits a Frobenius form, the radical of the form and $R(\A,{\rm X})$ are closely related (see~\cite{axial}).
	
	Sometimes, the fusion law $(\F,\star)$ is graded by a finite abelian group $T$, in the sense that there exists a partition $\{\F_t\mid t\in T\}$ such that for all $s,t\in T$, \[\F_s\star\F_t\subseteq \F_{st}.\] In these cases, it is induced a $T$-grading on $\A$ for each axis $a$, namely $\A=\dot{+}_{t\in T}\A^a_{\F_t}$. 
	
	Let $T^*$ be the group of linear characters of $T$. For each axis $a$, there exists a group homomorphism, $\tau_a\colon T^* \to \Aut(\A)$, where $\tau_a(\chi)\colon \A \to \A $, with $\chi\in T^*$, is defined by the linear extension of
	\begin{longtable}{rcl}
		$\tau_a(\chi)\colon \A$ & $\to$&$ \A$\\
		$u$ & $\longmapsto$&$ \chi(t)u,$ 
	\end{longtable}
	\noindent for $u\in \A_t^a$.
	
	The automorphisms of the type $\tau_a(\chi)$ are called Miyamoto automorphisms, and the image $T_a\coloneqq \Ima \tau_a$ is called the axial subgroup of $\Aut(\A)$ corresponding to $a$. If $\Za$ is a set of axes of $\A$, the subgroup \[G(\Za)\coloneqq \langle T_a \mid a \in \Za\rangle \subseteq \Aut(\A)\]
	is known as the Miyamoto group of $\A$ with respect to $\Za$.
	
	In this paper, we will restrict to dealing with gradings by the cyclic group $C_2$ of order two.  To avoid pathological situations, on many occasions we will assume that $\charac F \neq 2$. Note that, in this setting, $T_a\subseteq \Aut (\A)$ has order two for every axis $a$. Let us write $T_a=\{\id_{\A},\phi_a\}$.

	A set of axes $\Za$ is called closed if $\phi(\Za)=\Za$ for any $\phi\in G(\Za)$. The minimal closed set of axes containing $\Za$ exists and it is called the closure of $\Za$, $\overline{\Za}$.

	As an example, we select here the algebras of dimension $2$ over $\mathbb{C}$ which are axial from the classification in \cite{kv16}, and provide some information about their basic features.
	
	\begin{exam}\label{ex:ax2}
		Consider the set $\left\{\left(\a,\b,\frac{\a\b-(\a-1)(\b-1)}{4\a\b-1}\right) \mid \a,\b\in\mathbb{C}, \a,\b\neq\frac{1}{2}, \a\b\neq\frac{1}{4}\right\}$. The symmetric group $S_3$ of permutations of three elements 
		acts canonically on ${\mathbb C}^3$ and induces an action on 
		\[\left\{\left(\a,\b,\frac{\a\b-(\a-1)(\b-1)}{4\a\b-1}\right) \mid \a,\b\in\mathbb{C}, 
		\a,\b\neq\frac{1}{2}, \a\b\neq\frac{1}{4}\right\}.\] We can choose a set of representatives of 
		the orbits, $\Delta$, such that $\b\neq 0,1$, $\a+\b\neq 1$ and $\a\neq\b\neq \frac{\a\b-(\a-1)(\b-1)}{4\a\b-1}\neq \a $. Set \[\kappa=\left\{(\a,\b)\in \mathbb{C}^2\mid \a,\b\neq\frac{1}{2}, \a\b\neq\frac{1}{4},\left(\a,\b,\frac{\a\b-(\a-1)(\b-1)}{4\a\b-1}\right)\in\Delta \right\}.\] 
		Also, the cyclic group $C_2$  acts on $\mathbb{C}\setminus\{0,1\}$ by taking $\leftidx^{{-1}}(\a)=\a^{-1}$. We will fix a certain set of representatives of the orbits under this action and denote it by $\mathbb{C}^*_{>1}$.

		\begin{longtable}{lllll}
			$A$ & $E_1(0,0,0,0)$ & $e_1e_1=e_1$ & $e_1e_2= 0$ & $e_2e_2=e_2;$  \\
			$B$ & $E_1(-1,-1,-1,-1)$ & $e_1e_1=e_1$ &  $e_1e_2= - e_1 - e_2$ & $e_2e_2=e_2;$ \\
			$C(\a)_{\a\neq 0,\pm \frac{1}{2},\pm 1}$ & $E_1(\a,\a,\a,\a)_{\a\neq 0,\pm \frac{1}{2},\pm 1}$ & $e_1e_1=e_1$ &  $e_1e_2= \a (e_1 + e_2)$ & $e_2e_2=e_2;$ \\
			$D(\b)_{(0,\b)\in \kappa}$ & $E_1(0,\b,0,\b)_{(0,\b)\in \kappa}$ & $e_1e_1=e_1$ & $e_1e_2= \b e_2$  & $e_2e_2=e_2;$ \\
			$E(\a,\b)_{(\a,\b)\in \kappa,\a\neq 0}$ & $E_1(\a,\b,\a,\b)_{(\a,\b)\in\kappa,\a\neq 0}$ & $e_1e_1=e_1$ &  $e_1e_2= \a e_1 +\b e_2$ & $e_2e_2=e_2;$ \\
			$F$ & $E_2(\frac{1}{2},0,0)$ & $e_1e_1=e_1$ & $e_1e_2=\frac{1}{2} e_1 $  & $e_2e_2=e_2;$ \\
			$G(\b)_{\b\neq 0, \frac{1}{2}, 1}$ & $E_2(\frac{1}{2},\b,\b)_{\b\neq 0, \frac{1}{2}, 1}$ & $e_1e_1=e_1$ & $e_1e_2=\frac{1}{2} e_1 + \b e_2 $  & $e_2e_2=e_2;$ \\
			$H(\g)_{\g\in\mathbb{C}^*_{>1}, \g\neq 2}$ & $E_3(\frac{1}{2},\frac{1}{2},\g)_{\g\in\mathbb{C}^*_{>1}, \g\neq 2}$ & $e_1e_1=e_1$ &  $e_1e_2=\frac{1}{2} \g e_1 + \frac{1}{2\g} e_2 $ & $e_2e_2=e_2;$ \\
			$I$ & $E_5(\frac{1}{2})$ & $e_1e_1=e_1$ & $e_1e_2= \frac{1}{2}(e_1 + e_2)$  & $e_2e_2=e_2.$
		\end{longtable}
		
		All the previous algebras, alongside with certain sets of generating axes, are two-closed axial algebras. The following Table~\ref{t:ax2} summarises information about some of their basic features.
		
		We limit to provide minimal set of axes ${\rm X}$ such that $(\A, {\rm X})$ is axial. The fusion laws are displayed in Table~\ref{t:flax2}, with the following conventions: we only write the non-zero products, and we assume that $1\star \lambda=\lambda$ for all $\lambda\in\F$, $\lambda\neq 0$. Write 
		\begin{longtable}{ccr}
			$a_3$ & $=$ & $e_1+e_2;$\\
			$a_4$ & $=$ & $-(e_1+e_2);$\\
			$a_5$ & $=$ & $\frac{1}{1+2\a}(e_1+e_2);$\\
			$a_6$ & $=$ & $e_1+(1-2\b)e_2;$\\
			$a_7$ & $=$ &$\frac{1-2\a}{1-4\a\b} e_1+\frac{1-2\b}{1-4\a\b}e_2;$\\
			$a_{\a}$ & $=$ & $\a e_1 + (1-\a)e_2,$
		\end{longtable} \noindent where $\a,\b$ are elements of $\mathbb{C}$ whose requirements vary from case to case, and denote by $\mathfrak{F}_2$ the free group generated by two involutions. Note that, regarding Frobenius forms, we just offer an example for each axial algebra. 
		
		{
			\begin{longtable}{|c|c|c|c|l|}
				\hline
				$\A$  & ${\rm X}$ & $(\F,\star)$ & Symmetric   & \multicolumn{1}{c|}{$(\cdot,\cdot)$} \\
				\hline \hline \multirow{3}{*}{$A$} & $\{e_1,e_2\}$ & \multirow{3}{*}{$(\F_A,\star_A)$} & Yes & \multirow{3}{*}{$\begin{array}{l}
						(e_1,e_1)=1\\
						(e_1,e_2)=0\\
						(e_2,e_2)=1
					\end{array}$} \\ \cline{2-2} \cline{4-4}
				& $\{e_1,a_3\}$ & & \multirow{2}{*}{No}    & \\ \cline{2-2}
				& $\{e_2,a_3\}$ & &  &   \\
				
				\hline \hline  \multirow{3}{*}{$B$} & $\{e_1,e_2\}$ & \multirow{3}{*}{$(\F_B,\star_B)$} & \multirow{3}{*}{Yes} &  \multirow{3}{*}{$\begin{array}{l}
						(e_1,e_1)=-2\\
						(e_1,e_2)=1\\
						(e_2,e_2)=-2
					\end{array}$} \\ \cline{2-2}
				& $\{e_1,a_4\}$ &  &  &   \\ \cline{2-2}
				& $\{e_2,a_4\}$ &  &  & \\
				
				\hline \hline
				\multirow{3}{*}{$C(\a)$} & $\{e_1,e_2\}$ & $(\F_{C1},\star_{C1})$ & Yes   & \multirow{3}{*}{$\begin{array}{l}
						(e_1,e_1)=(1-\a)/\a\\
						(e_1,e_2)=1\\
						(e_2,e_2)=(1-\a)/\a
					\end{array}$} \\ \cline{2-4}
				& $\{e_1,a_5\}$ & \multirow{2}{*}{$(\F_{C2},\star_{C2})$} & \multirow{2}{*}{No}  &    \\ \cline{2-2}
				& $\{e_2,a_5\}$ & &  &  \\
				
				\hline \hline
				\multirow{3}{*}{$D(\b)$} & $\{e_1,e_2\}$ & $(\F_{D1},\star_{D1})$ & \multirow{3}{*}{No} &  \multirow{3}{*}{$\begin{array}{l}
						(e_1,e_1)=1\\
						(e_1,e_2)=0\\
						(e_2,e_2)=0
					\end{array}$} \\ \cline{2-3}
				& $\{e_1,a_6\}$ & $(\F_{D2},\star_{D2})$ &   &    \\  
				\cline{2-3}  & $\{e_2,a_6\}$ & $(\F_{D3},\star_{D3})$ & & \\
				
				\hline \hline
				\multirow{3}{*}{$E(\a,\b)$} & $\{e_1,e_2\}_{(\a\neq 1)}$ & $(\F_{E1},\star_{E1})$ & \multirow{3}{*}{No}& \multirow{3}{*}{$\begin{array}{l}
						(e_1,e_1)=(1-\b)/\a\\
						(e_1,e_2)=1\\
						(e_2,e_2)=(1-\a)/\b
					\end{array}$} \\ \cline{2-3} 
				& $\{e_1,a_7\}$ & $(\F_{E2},\star_{E2})$ &   &    \\ \cline{2-3}
				& $\{e_2,a_7\}_{(\a\neq 1)}$  & $(\F_{E3},\star_{E3})$ &  &   \\
				
				\hline \hline $F$ & $\{e_1,e_2\}$ & $(\F_{F},\star_{F})$ & No  & $\begin{array}{l}
					(e_1,e_1)=0\\
					(e_1,e_2)=0\\
					(e_2,e_2)=1
				\end{array}$\\
				
				\hline \hline $G(\b)$ & $\{e_1,e_2\}$ & $(\F_{G},\star_{G})$ & No  & $\begin{array}{l}
					(e_1,e_1)=2(1-\b)\\
					(e_1,e_2)=1\\
					(e_2,e_2)=1/2\b
				\end{array}$ \\
				
				\hline \hline $H(\g)$ & $\{e_1,e_2\}$ & $(\F_{H},\star_{H})$ & $ \g=-1$ &  $\begin{array}{l}
					(e_1,e_1)=(2\g -1)/\g^2\\
					(e_1,e_2)=1\\
					(e_2,e_2)=(2-\g)\g
				\end{array}$ \\
				
				\hline \hline $I$ & $\{a_{\a},a_{\b}\}_{\a\neq\b}$ & $(\F_{I},\star_{I})$ & $ \a\neq-\b$  & $\begin{array}{l}
					(e_1,e_1)=1\\
					(e_1,e_2)=1\\
					(e_2,e_2)=1
				\end{array}$ \\
				\hline 
				
				\caption{Complex axial algebras of dimension $2$.}
				\label{t:ax2}
		\end{longtable}}
		\begin{longtable}{|c|c|ccc|}
			\hline
			Fusion law & $\F$ &	 \multicolumn{3}{c|}{}\\
			\hline
			$(\F_{A},\star_{A})$ & $ \{1,0\} $& $ 0 \star_{A} 0 =   0 $ && \\
			
			\hline
			$(\F_{B},\star_{B})$ & $ \{1,-1\} $&  $(-1) \star_{B} (-1) =  1   $ && \\
			
			\hline	
			$(\F_{C1},\star_{C1})$ & $ \{1,\a \} $&  $\a \star_{C1} \a = \{ 1,\a\} $ && \\

			\hline
			$(\F_{C2},\star_{C2})$ & $ \{1,\a, \l \} $& 
			
			$\a \star_{C2} \a = \{ 1,\a\},$ & 
			$\l \star_{C2} \l =  1$ & \\
			
			\hline
			$(\F_{D1},\star_{D1})$ & $ \{1, \beta, 0\} $& $\beta \star_{D1} \beta =   \beta  , $& 
			$0 \star_{D1} 0 = \{1,0\} $ &\\

			\hline
			$(\F_{D2},\star_{D2})$ & $ \{1, \beta, 1-\beta \} $&  $\beta \star_{D2} \beta =   \beta $, & 
			$(1-\beta) \star_{D2} (1-\beta) = 1-\beta$ &  \\

			\hline
			$(\F_{D3},\star_{D3})$ & $ \{1, 1-\beta, 0 \} $&  
			$(1-\beta) \star_{D3} (1-\beta) =   1-\beta $, & $0 \star_{D3} 0 = \{1,0\}$  & \\

			\hline
			$(\F_{E1},\star_{E1})$ & $ \{1, \beta, \l \} $&  
			$\beta \star_{E1} \beta =   \{ 1, \beta \}, $& 
			$\l \star_{E1} \l = \{1,\l\} $ &\\

			\hline
			$(\F_{E2},\star_{E2})$ & $ \{1, \a, \beta  \} $&  
			$\a \star_{E2} \a =   \{ 1, \a \}, $& 
			$\beta \star_{E2} \beta = \{1,\beta \} $ &\\

			\hline
			$(\F_{E3},\star_{E3})$ & $ \{1, \a, \l  \} $&  
			$\a \star_{E3} \a =   \{ 1, \a \}, $& 
			$\l \star_{E3} \l = \{1,\l \} $ &\\

			\hline
			$(\F_{F},\star_{F})$ & $ \{1, 1/2, 0  \} $&  
			$1/2 \star_{F} 1/2 = 1/2 $,& $0 \star_{F} 0 =   \{ 1, 0 \}$
			& \\

			\hline
			$(\F_{G},\star_{G})$ & $ \{1, \beta, 1/2  \} $&  
			$\beta \star_{G} \beta =   \{ 1, \beta \}, $& 
			$1/2 \star_{G} 1/2 = \{ 1, 1/2\} $ &\\

			\hline
			$(\F_{H},\star_{H})$ & $ \{1,  \frac{1}{2\gamma}, \gamma/2  \} $&  
			$ \frac{1}{2\gamma} \star_{H}  \frac{1}{2\gamma} =   \{ 1,  \frac{1}{2\gamma} \}, $& 
			$\gamma/2  \star_{H} \gamma/2  = \{ 1,\gamma/2 \} $ &\\

			\hline
			$(\F_{I},\star_{I})$ & $ \{1, 1/2   \} $&  
			&  &\\

			\hline
			\multicolumn{5}{c}{} \\
			
			\caption{Fusion laws in Table~\ref{t:ax2}.}
			\label{t:flax2}
		\end{longtable}
		(For $D3$, $\l=\frac{1}{1+2\a}$, and for $E1$ and $E3$,
		$\lambda=\frac{1-\alpha-\beta}{1-4\alpha\beta}$).

		We indicate now some other properties of the above algebras not displayed in Table~\ref{t:ax2}. The only algebras which are not primitive are $(A,\{e_1,a_3\})$ and $(A,\{e_2,a_3\})$; the only ones that have non-zero radical, $(D(\b),\{e_1,a_6\})$ and $(I,\{a_{\a},a_{\b}\})$, with $R(D(\b),\{e_1,a_6\})=\langle e_2\rangle$ and $R(I,\{a_{\a},a_{\b}\})=\langle e_1-e_2\rangle$, respectively. For the sake of completeness, we point out that the algebras $A$, $B$, $C(\a)$, $E(\a,\b)$, $G(\b)$ and $H(\g)$ are in fact simple. 
		
		Note that, between the former fusion laws, only $(\F_B,\star_B)$, $(\F_{C2},\star_{C2})$ and $(\F_I,\star_I)$  admit $\zd$-gradings. We now show in detail the explicit Miyamoto groups of $B$, $C(\a)$ and $I$.
		
		Consider first $B$. The fusion law  $(\F_B,\star_B)$ admits the $\zd$-grading $(\F_B)_1=\{1\}$, $(\F_B)_{-1}=\{-1\}$. The Miyamoto automorphisms with respect to the axes $e_1$, $e_2$ and $a_4$ are
		\begin{longtable}{rcl}
			$\phi_{e_i}\colon  B$ & $\to$&$ B$\\
			$e_i$ & $\mapsto$&$ e_i,$\\
			$e_j$ & $\mapsto$&$ -e_i-e_j,$
		\end{longtable}
		\noindent for $i,j\in\{1,2\}$, $i\neq j$, and
		\begin{longtable}{rcl}
			$\phi_{a_4}\colon  B$  & $\to$&$ B$\\
			$e_1$ & $\mapsto$&$ e_2,$\\
			$e_2$ & $\mapsto$&$ e_1.$
		\end{longtable}
		
		\noindent As a consequence, the Miyamoto group with respect to ${\rm X}=\{e_1,e_2\}$ is
		\[G({\rm X})=\langle \phi_{e_1},\phi_{e_2} \mid \phi_{e_1}^2=\phi_{e_2}^2=(\phi_{e_1}\phi_{e_2})^3=(\phi_{e_2}\phi_{e_1})^3=1\rangle\simeq S_3, \] and also, denoting ${\rm X}_{i}=\{e_i,a_4\}$,
		\[G({\rm X}_{i})=\langle \phi_{e_i},\phi_{a_4} \mid \phi_{e_i}^2=\phi_{a_4}^2=(\phi_{e_i}\phi_{a_4})^3=(\phi_{a_4}\phi_{e_i})^3=1\rangle\simeq S_3, \]
		for $i\in\{1,2\}$, $i\neq j$. The closure of any of the generating sets of axes considered in Table~\ref{t:ax2} is $\overline{X}=\{e_1,e_2,a_4\}$.
		\newline
		
		Regarding $C(\a)$, the fusion law $(\F_2,\star_2)$ admits the $\zd$-grading $(\F_{C2})_{1}=\{1,\a\}$, $(\F_{C2})_{-1}=\{\l\}$. The Miyamoto automorphisms with respect to the axes $e_1$ and $e_2$ are $\id_{C(\a)}$, whereas
		\begin{longtable}{rcl}
			$\phi_{a_5}\colon  C(\a)$ & $\to$&$ C(\a)$\\
			$e_1$ & $\mapsto$&$ e_2$,\\
			$e_2$ & $\mapsto$&$ e_1$.
		\end{longtable}
		
		\noindent Then, the Miyamoto groups with respect to ${\rm X}_{i}=\{e_i,a_5\}$ are
		\[G({\rm X}_{i})=\langle \phi_{a_5} \mid \phi_{a_5}^2=1\rangle\simeq \zd, \]
		for $i\in\{1,2\}$. The closure of both generating sets of axes considered in Table~\ref{t:ax2} is $\overline{X}=\{e_1,e_2,a_5\}$.
		
		Write ${\rm X}=\{e_1,e_2\}$. Since $(\F_{C1},\star_{C1})\subseteq (\F_{C2},\star_{C2})$, we could also consider $(C(\a),{\rm X})$ as an $(\F_{C2},\star_{C2})$-axial algebra. In this case, the Miyamoto group reduces to $G({\rm X})=\{\id_{C(\a)}\}$, and $\overline{{\rm X}}={\rm X}$.
		
		Finally, consider $I$. Now, the $\zd$-grading of $(\F_I,\star_I)$ is  $(\F_{I})_{1}=\{1\}$, $(\F_{I})_{-1}=\{\frac{1}{2}\}$. The Miyamoto group with respect to ${\rm X}=\{a_{\a},a_{\b}\}_{\a\neq\b}$ is \[G({\rm X})=\langle \phi_{a_{\a}},\phi_{a_{\b}} \mid \phi_{a_{\a}}^2=\phi_{a_{\b}}^2=1\rangle\simeq \mathfrak{F}_2, \]
		where
		\begin{longtable}{rcl}
			$\phi_{a_{\a}}\colon  I$ & $\to$ &$I$\\
			$e_1$ & $\mapsto$&$ (2\a-1)e_1 + 2(1-\a)e_2,$\\
			$e_2$ & $\mapsto$&$ 2\alpha e_1 + (1-2\alpha)e_2,$
		\end{longtable}
		\noindent for any $\a\in\mathbb{C}$. Also, it holds that $\overline{X}=\{a_{\a+\mathbb{Z}(\a-\b)}\}$.
	\end{exam}

	\section{Central extensions of axial algebras}\label{s:ce}
	Let $\A$ be an algebra and ${\rm V}$ a vector space, and recall that a central extension of $\A$ by $\rm V$ is a short exact sequence $0\to {\rm V}\to \Ba \to \A\to 0$ such that the image of $\rm V$ is contained in the annihilator $\Ann(\Ba)$; we say that the dimension of the extension is $\dim {\rm V}$. It is customary to identify the central extension with $\Ba$. Let also $\theta\colon \A \times \A \to {\rm V}$ be a bilinear map, and define $\A_{\theta}=\A\dot{+} {\rm V}$, which can be given structure of algebra with the product $[x+v,y+w]_{\theta}=xy+\theta(x,y)$. It is immediate to check that $\A_{\theta}$ is a central extension of $\A$ with respect to ${\rm V}$. Also, it can be seen that  every central extension of $\A$ of arbitrary dimension is isomorphic to $\A_{\theta}$ for some ${\rm V}$ and $\theta$ in the above conditions. Then, we will identify any central extension of $\A$ with a certain $\A_{\theta}$ for some ${\rm V}$ and $\theta$ in the above conditions.
	
	In the following paragraphs, we present some basic features of this approach to central extensions which will be needed later (for more details about the Skjelbred-Sund method, see, for example, \cite{cald,KLP} and references therein).
	
	Let $f\colon \A \to {\rm V}$ be a linear map and define the bilinear map $\delta f\colon \A\times \A \to {\rm V}$ by $\delta f (x,y)=f(xy)$. The set $\{\delta f\mid f\colon \A\to {\rm V} \text{ is linear}\}$ is a linear subspace of the bilinear maps from $\A$ to ${\rm V}$, so we can consider the quotient space. Note that if $\theta'=\theta + \delta f$ for some $f\colon \A \to {\rm V}$, then the map $\varphi\colon \A_{\theta}\to \A_{\theta'}$ defined by $\varphi(x+v)=x+v+f(x)$ is an isomorphism. Therefore, the isomorphy class of $\A_{\theta}$ does not depend on the representatives $\theta$ of the equivalence class $[\theta]$. Also, there is a natural (right) action of $\Aut(\A)$ on this quotient space, induced by $\phi \theta(x,y)=\theta(\phi(x),\phi(y))$ for $\phi \in \Aut(\A)$. The isomorphy class of $\A_{\theta}$ does not depend either on the particular point in the orbit of $[\theta]$ under this action, as the map $\varphi\colon \A_{\theta}\to \A_{\phi\theta}$ given by $\varphi(x+v)=\phi^{-1}(x)+v$ is an isomorphism. 
	
	
	Given a bilinear map $\theta\colon \A \times \A \to {\rm V}$ and a basis $\{e_{\g}\}_{\g\in\Gamma}$ of ${\rm V}$, there exist $\lvert \Gamma \rvert$ unique  bilinear maps $\theta_{\g}\colon \A \times \A \to F$ such that $\theta(x,y)=\sum_{\g\in\Gamma}\theta_{\g}(x,y)e_{\gamma}$.
	A central extension $\A_{\theta}$ is said to have an annihilator component if there exist an ideal $I$ and a non-zero subalgebra of $\Ann \A_{\theta}$, $J$, such that $\A_{\theta}=I\oplus J$. The central extensions with annihilator component are called  split; without annihilator component,   non-split. If the dimensions of $\A$ and ${\rm V}$ are finite, for a non-split central extension $A_{\theta}$, it holds that the set $\{[\theta_{\g}]\}_{\g\in\Gamma}$ is linearly independent; the converse is also true under the hypothesis $\Ann \A_{\theta}={\rm V}$. 
	
	In the subsequent, we study when a central extension of an axial algebra is axial in terms of the bilinear map $\theta$. Let us fix the following notation. We will denote by $L_x$ and $L_{x+v}^{\theta}$ the operators of left multiplication in $\A$ and in $\A_{\theta}$, respectively. The spectrum of $L_{x+v}^{\theta}$ will be denoted by $\Spec_{\theta}(x+v)$. Also, we write $\theta_{x}^{\perp}=\{y\in \A \mid \theta(x,y)=0\}$, and denote by $P\colon \A_{\theta}\to \A$ the natural projection onto $\A$.

	The following results are easy consequences of the definitions:
	
	\begin{lm}
		Let $\A$ be a commutative algebra, ${\rm V}$ a vector space and $\theta\colon \A\times \A \to {\rm V}$ a bilinear map. Then, $\A_{\theta}$ is commutative if and only if $\theta$ is symmetric.
	\end{lm}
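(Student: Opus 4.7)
The statement is essentially a direct unwinding of the definition of the product on $\A_\theta$, so the plan is simply to make the two conditions explicit and compare them. I would fix arbitrary elements $x+v,\, y+w \in \A_\theta$ (with $x,y\in\A$ and $v,w\in {\rm V}$) and write out both products using the defining formula $[x+v,y+w]_\theta = xy + \theta(x,y)$. Commutativity of $\A_\theta$ amounts to the identity
\[
xy + \theta(x,y) \;=\; yx + \theta(y,x)
\]
holding for all such elements. Note the right-hand side does not depend on $v$ or $w$, so it suffices to test the condition on the $\A$-components.

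For the forward direction, assume $\A_\theta$ is commutative. Using that $\A$ itself is commutative (so $xy=yx$), the displayed identity collapses to $\theta(x,y)=\theta(y,x)$ for all $x,y\in\A$, which says $\theta$ is symmetric. For the converse, if $\theta$ is symmetric, then combining this with $xy=yx$ immediately gives $[x+v,y+w]_\theta = [y+w,x+v]_\theta$, so $\A_\theta$ is commutative.

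There is no genuine obstacle here; the only thing to be careful about is that the ${\rm V}$-components $v,w$ play no role in the product, which is a consequence of ${\rm V}\subseteq \Ann(\A_\theta)$ built into the construction. Hence both implications reduce to the single algebraic identity above, and the lemma follows.
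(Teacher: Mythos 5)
Your proof is correct and matches the paper's intent: the paper states this lemma without proof, calling it an easy consequence of the definitions, and your unwinding of $[x+v,y+w]_\theta=xy+\theta(x,y)$ together with the commutativity of $\A$ is exactly that argument.
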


	\begin{lm}\label{lm:crjone}
		Let ${\rm V}$ be a vector space and $\A$ an algebra
		with an element $x\in \A$ such that $L_x\colon \A\to \A$ is diagonalisable. Choose a symmetric bilinear map $\theta\colon \A\times \A\to {\rm V}$ such that $\ker(L_x)\subseteq \theta_x^\perp$. Then if $\{e_\a\}_{\a\in\mathcal{A}}$ is a basis of $\A$ diagonalising $L_x$ and
		$L_x(e_\a)=\l_\a e_\a$, $\l_\a\in F$, for any $\a$, we may construct a basis of $\A_\theta$
		given by $$B:=\{e_\a+\l_\a^{-1}\theta(x,e_\a)\colon\l_\a\ne 0\}\sqcup\{e_\a\colon\l_\a=0\}\sqcup B_{\rm V}$$ where $B_{\rm V}$ is any basis of ${\rm V}$. The basis $B$ diagonalises $L_{x+v}$ for any $v\in {\rm V}$.
	\end{lm}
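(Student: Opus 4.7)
The plan is to verify the three required properties directly from the product on $\A_\theta$. Recall that $L_{x+v}^\theta(y+w) = xy + \theta(x,y)$ for all $y+w \in \A_\theta$; in particular, $L_{x+v}^\theta$ does not depend on $v$ and annihilates the summand $V$ (seen as $0 \dot{+} V$).

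First I would check that every element of $B$ is an eigenvector of $L_{x+v}^\theta$. For $w \in B_V \subseteq V$, the observation above gives $L_{x+v}^\theta(w) = 0$, so $w$ lies in the $0$-eigenspace. For $e_\alpha$ with $\lambda_\alpha = 0$, the hypothesis $\ker(L_x) \subseteq \theta_x^\perp$ yields $\theta(x, e_\alpha) = 0$, whence $L_{x+v}^\theta(e_\alpha) = \lambda_\alpha e_\alpha + \theta(x, e_\alpha) = 0$. Finally, for $\lambda_\alpha \neq 0$, setting $f_\alpha = e_\alpha + \lambda_\alpha^{-1}\theta(x,e_\alpha)$ and using that $\lambda_\alpha^{-1}\theta(x,e_\alpha) \in V$ is annihilated by $L_{x+v}^\theta$, a direct computation gives
\[
L_{x+v}^\theta(f_\alpha) = x e_\alpha + \theta(x, e_\alpha) = \lambda_\alpha e_\alpha + \theta(x,e_\alpha) = \lambda_\alpha f_\alpha,
\]
so $f_\alpha$ is a $\lambda_\alpha$-eigenvector.

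To see that $B$ is a basis of $\A_\theta = \A \dot{+} V$, I would use the natural projection $P\colon \A_\theta \to \A$ introduced before the lemma. Its image on $B \setminus B_V$ is exactly the basis $\{e_\alpha\}_{\alpha \in \mathcal{A}}$ of $\A$, while $B_V$ is a basis of $\ker P = V$; projecting any relation to $\A$ forces the $B \setminus B_V$ coefficients to vanish, after which the remaining relation lives in $V$ and gives the vanishing of the $B_V$ coefficients, proving linear independence. Spanning follows analogously by projecting an arbitrary element of $\A_\theta$ and correcting with elements of $B_V$.

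The proof is mostly bookkeeping, and there is no serious obstacle: the only step that genuinely uses a hypothesis beyond the definition of $\A_\theta$ is the handling of the $\lambda_\alpha = 0$ vectors, where the condition $\ker(L_x) \subseteq \theta_x^\perp$ is precisely what prevents $L_{x+v}^\theta(e_\alpha) = \theta(x, e_\alpha)$ from acquiring a non-zero component in $V$ and thereby spoiling diagonalisation.
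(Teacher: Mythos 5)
Your proof is correct: the paper states this lemma without proof as an ``easy consequence of the definitions,'' and your direct verification (eigenvector check case by case, using $\ker(L_x)\subseteq\theta_x^\perp$ exactly where needed for the $\l_\a=0$ vectors, plus the projection argument for linear independence and spanning) is precisely the intended argument.
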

	
	\begin{lm}\label{l:eig}
		Let $\A$ be a commutative algebra, ${\rm V}$ a vector space and $\theta\colon \A\times \A \to {\rm V}$ a bilinear map. Then, $\Spec_{\theta}(x+v)=\Spec(x)\cup\{0\}$ for all semisimple $x\in \A$ and all $v\in {\rm V}$, and the eigenspaces of $L_{x+v}^{\theta}$ are
		\[(\A_{\theta})_{\lambda}^{x+v} =\{y+\lambda^{-1}\theta(x,y)\mid y\in\A_{\lambda}^{x}\},\] for $\lambda \in \Spec(x)$, $\lambda\neq 0$, and
		\[(\A_{\theta})_0^{x+v}=\{y+w\in \A_{\theta}\mid y\in\A_0^{x}\cap\theta_{x}^{\perp}\},\] recalling that we mean $\A_0^x=0$ if $0\notin\Spec(x)$.
	\end{lm}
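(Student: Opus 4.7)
The plan is to work directly from the defining product of $\A_\theta$ and compare components in the decomposition $\A_\theta = \A \dot{+} {\rm V}$. Writing $L := L_{x+v}^{\theta}$, since ${\rm V} \subseteq \Ann(\A_\theta)$ the key identity is
\[L(y+w) = (x+v)(y+w) = xy + \theta(x,y)\]
for any $y \in \A$ and $w \in {\rm V}$, with $\A$-component $xy$ and ${\rm V}$-component $\theta(x,y)$. An eigenvector equation $L(y+w) = \mu(y+w)$ therefore splits uniquely as $xy = \mu y$ together with $\theta(x,y) = \mu w$, and this componentwise reading drives the whole proof.

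For a nonzero $\lambda \in \Spec(x)$ I would first verify the forward inclusion: given $y \in \A_\lambda^x$, a direct computation yields
\[L\bigl(y + \lambda^{-1}\theta(x,y)\bigr) = \lambda y + \theta(x,y) = \lambda\bigl(y + \lambda^{-1}\theta(x,y)\bigr).\]
For the converse, the componentwise decomposition of an arbitrary element of $(\A_\theta)_\lambda^{x+v}$ forces $y \in \A_\lambda^x$ and, since $\lambda \neq 0$, $w = \lambda^{-1}\theta(x,y)$, matching the stated formula.

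The case $\mu = 0$ is similarly direct: $L(y+w) = 0$ iff $xy = 0$ and $\theta(x,y) = 0$, i.e.\ $y \in \A_0^x \cap \theta_x^\perp$ with $w$ free, recovering the description of $(\A_\theta)_0^{x+v}$ and showing that every nonzero $w \in {\rm V}$ is a $0$-eigenvector, so $0 \in \Spec_{\theta}(x+v)$. For the spectrum equality, any $\mu$-eigenvector $y+w$ with $\mu \neq 0$ must have a nonzero $\A$-part $y \in \A_\mu^x$, forcing $\mu \in \Spec(x)$; the reverse inclusion was established above by the explicit eigenvectors $y + \lambda^{-1}\theta(x,y)$.

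Semisimplicity of $x$ does not actively enter this argument and is retained purely to align with the axial setting in which this lemma will be used; accordingly I do not anticipate any genuine obstacle. The whole statement is a clean bookkeeping exercise in the projection $P \colon \A_\theta \to \A$ combined with its ${\rm V}$-valued complement $\theta(x,\cdot)$, and the only mild subtlety is the convention $\A_0^x = 0$ when $0 \notin \Spec(x)$, which the formula is designed precisely to accommodate.
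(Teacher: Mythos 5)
Your proof is correct and follows the same direct componentwise verification that the paper has in mind (the paper states this lemma without proof, as an ``easy consequence of the definitions''). The only cosmetic remark is that the claim $0\in\Spec_{\theta}(x+v)$ implicitly assumes ${\rm V}\neq 0$, an edge case the paper also leaves tacit.
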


	Furthermore we have:
	\begin{lm}\label{lm:crjtwo}
		Let $B=\{e_\a+v_\a\}_{\a\in\mathcal A}$ be a basis of $\A_\theta$ diagonalising $L_{x+v}$. Then 
		$\ker(L_x)\subseteq\theta_x^\perp$ and $L_x$ is diagonalisable. 
	\end{lm}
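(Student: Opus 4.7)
The plan is to exploit the direct-sum decomposition $\A_\theta = \A \dot{+} {\rm V}$ and read off information by projecting the eigenvalue equation onto each summand. For each basis vector $e_\a + v_\a \in B$, there exists a scalar $\l_\a$ such that $L_{x+v}^\theta(e_\a + v_\a) = \l_\a(e_\a + v_\a)$. Unwinding the bracket in $\A_\theta$ gives
\[
L_{x+v}^\theta(e_\a + v_\a) \;=\; [x+v,\,e_\a+v_\a]_\theta \;=\; xe_\a + \theta(x,e_\a),
\]
and matching the $\A$- and ${\rm V}$-components of the eigenvalue equation yields the two identities $L_x(e_\a) = \l_\a e_\a$ and $\theta(x,e_\a) = \l_\a v_\a$.

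The first identity says that every nonzero $e_\a$ is an eigenvector of $L_x$ with eigenvalue $\l_\a$. Applying the projection $P\colon \A_\theta \to \A$ to the basis $B$, and using that $P$ is surjective, shows that $\{e_\a\}_{\a\in\mathcal{A}}$ spans $\A$; hence $\A$ is spanned by eigenvectors of $L_x$, so $L_x$ is diagonalisable. Because eigenspaces for distinct eigenvalues are linearly independent, this spanning statement refines to $\A_\l^x = \lsp\{e_\a : \l_\a = \l\}$ for every $\l\in\Spec(x)$.

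For the inclusion $\ker(L_x) \subseteq \theta_x^\perp$, take any $y\in\ker(L_x) = \A_0^x$. By the refinement above, $y$ is a linear combination of those $e_\a$ with $\l_\a = 0$. For any such index, the second projection identity becomes $\theta(x,e_\a) = 0\cdot v_\a = 0$; extending by linearity of $\theta(x,\cdot)$ gives $\theta(x,y) = 0$, which is the desired inclusion.

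The argument is essentially routine; the only subtlety is to allow for basis elements $e_\a + v_\a$ with $e_\a = 0$ (that is, basis vectors lying in ${\rm V}$), which automatically satisfy $\l_\a = 0$ and contribute nothing to the spanning statements in $\A$, so they cause no interference with either conclusion. No stronger structural hypothesis on $\theta$ or on the Miyamoto-type data is needed: the proof is purely linear-algebraic, relying only on the splitting $\A_\theta = \A\dot{+}{\rm V}$ and the definition of the product $[\cdot,\cdot]_\theta$.
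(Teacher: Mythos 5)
Your proof is correct and follows essentially the same route as the paper's: both decompose the eigenvalue equation $L_{x+v}^{\theta}(e_\a+v_\a)=\l_\a(e_\a+v_\a)$ into the identities $xe_\a=\l_\a e_\a$ and $\theta(x,e_\a)=\l_\a v_\a$, observe that the $e_\a$'s span $\A$ (giving diagonalisability of $L_x$), and express an element of $\ker(L_x)$ as a combination of those $e_\a$ with $\l_\a=0$ to conclude $\theta(x,z)=0$. Your explicit remark about basis vectors with $e_\a=0$ is a harmless refinement that the paper handles implicitly by passing to a subset $\{e_\a\}_{\a\in\mathcal{A}'}$ forming a basis of $\A$.
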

	\begin{proof}
		We have $L_{x+v}(e_\a+v_\a)=\l_\a(e_\a+v_\a)$, which implies $xe_\a=\l_\a e_\a$ and $\theta(x,e_\a)=\l_\a v_\a$ for any $\a\in\mathcal{A}$. Note that when $\l_\a\ne 0$ we have $v_\a=\l_\a^{-1}\theta(x,e_\a)$. Define $S\coloneqq \{\a\in\mathcal{A}\colon \l_\a\ne 0\}$ and $T\coloneqq\mathcal{A}\setminus S$. So $B=\{e_\a+\l_\a^{-1}\theta(x,e_\a)\colon \a\in S\}\sqcup\{e_\a+v_\a\colon \a\in T\}$. The set $\{e_\a\}_{\a\in\mathcal{A}}$ is a system of linear generators of $\A$ and so a suitable subset $\{e_\a\}_{\a\in\mathcal{A}'}$ is a basis of $\A$. In this basis we can distinguish those $e_\a$'s whose $\l_\a$ is non-zero and those whose $\l_\a=0$. So we have a basis of $\A$ of the form $B'=\{e_\a\colon\l_\a\ne 0\}_{\a\in\mathcal{A}'}\sqcup\{e_\a\colon\l_\a=0\}_{\a\in\mathcal{A}'}$. Take $z\in\ker(L_x)$ and write $z=\sum_\a k_\a e_\a$ with $k_\a\in F$ relative to the basis $B'$. Then \[0=xz=\sum_\a k_\a xe_\a=\sum_\a k_\a\l_\a e_\a\] where the $\lambda_\a$'s in the last sum are those which are non-zero. Consequently $k_\a \l_\a=0$, that is, $k_\a=0$. Thus $z=\sum_\a k_\a e_\a$ where the sum is extended to those indices $\a$ for which $\l_\a=0$. So $\theta(x,z)=\sum_\a k_\a\theta(x,e_\a)=\sum_\a k_\a\l_\a v_\a=0$
		and $z\in\theta_x^\perp$. The fact that $L_x$ is diagonalisable follows now from the fact that $xe_\a=\l_\a e_\a$ for any $\a\in \mathcal{A}'$.
	\end{proof}

	Fix a fusion law $(\mathcal{F},\star)$. Unless otherwise stated, all axial algebras will be assumed to be axial with respect to $(\mathcal{F},\star)$.
	
	Let $(\A,{\rm X})$ an axial algebra, ${\rm V}$ a vector space  and $\theta\colon \A\times \A\to {\rm V}$ a symmetric bilinear map such that $\{[\theta_{\g}]\}_{\g\in\Gamma}$ are linearly independent. Let $\{{\rm X}^i\}_{i\in I}$ be the family of minimal sets of axes that generate $\mathbf A$, being ${\rm X}^i=\{a_1^i,\ldots,a_{r^i}^i\}$. In particular, each ${\rm X}^i$ is linearly independent and can be extended to a basis $B^i=\{a^i_{j}\}_{j\in J}$ of $\A$.

	
	Set $\omega^i_j=\theta(a^i_j,a^i_j)$ and define $f^i_j\colon \A\to {\rm V}$ by $f^i_j(a^i_k)=\omega^i_j\delta_{jk}$, for $j=1,\dots,r^i$ and $k\in J$. Then, consider \[\theta^i=\theta-\sum_{j=1}^{r^i}\delta f_j^i.\]
	It is clear that $[\theta]=[\theta^i]$; moreover, it holds that $\theta^i(a^i_k,a^i_k)=0$ for all $k=1,\dots,r^i$:
	
	\begin{equation} \label{eq:thid0}
		\theta^i(a^i_k,a^i_k)=\theta(a^i_k,a^i_k)-\sum_{j=1}^{r^i}\delta f_j(a^i_k,a^i_k)=\omega^i_k - \sum_{j=1}^{r^i} f_j(a^i_k)=\omega^i_k - \omega^i_k=0.
	\end{equation}
	
	For the sake of simplicity, we will drop the superscript $i$ whenever ${\rm X}$ is assumed to be a minimal set of axes generating $\A$. Also, when ${\rm X}$ is linearly independent, we can assume without loss of generality that $\theta(a_j,a_j)=0$ for all $j=1,\dots, r$.
	
	Let us establish another piece of notation. Let $a\in {\rm X}$ and $\lambda,\mu\in\Spec(a)$. For $x\in\A_{\lambda}^{a}$ and $y\in\A_{\mu}^{a}$,  write \[xy=\sum_{0\neq \nu\in\lambda\star \mu}z_{\nu}+z_0,\] where $z_{\nu}\in\A_{\nu}^{a}$ and $z_0\in\A_{0}^{a}$.

	\begin{prop}\label{prop:ss}
		Let $(\A,{\rm X})$ be an axial algebra, ${\rm V}$ a vector space and $\theta\colon \A\times \A\to {\rm V}$ a symmetric bilinear map. Let $x\in\A$ be semisimple and take $v\in {\rm V}$. Then, $x+v\in\A_{\theta}$ is semisimple if and only if \begin{equation}\label{axcond1}
			\ker L_x\subseteq \theta_{x}^{\perp}.
		\end{equation} 
		Furthermore, if an axis $a\in {\rm X}$ satisfies condition~\eqref{axcond1}, the eigenspace decomposition of $\A_{\theta}$ according to $a+v$ follows the fusion law $(\mathcal{F}\cup\{0\},\star)$
		if and only if for every  $\lambda,\mu\in\Spec(a)$ such that $0\notin\lambda\star \mu$, it holds that 
		\begin{equation}\label{axcond2}
			\theta(x,y)=\sum_{\nu\in\lambda\star \mu}\nu^{-1}\theta(a,z_{\nu})
		\end{equation}
		for all  $x\in\A_{\lambda}^{a}$, $y\in\A_{\mu}^{a}$. 
	\end{prop}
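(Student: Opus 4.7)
The first assertion follows directly from the two preceding lemmas. Assuming \eqref{axcond1}, Lemma~\ref{lm:crjone} produces an explicit basis of $\A_\theta$ that diagonalises $L_{x+v}^\theta$, so $x+v$ is semisimple; conversely, feeding any basis of $\A_\theta$ diagonalising $L_{x+v}^\theta$ into Lemma~\ref{lm:crjtwo} returns $\ker L_x\subseteq\theta_x^\perp$.

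For the second assertion, the plan is to compute the product of typical elements in two shifted eigenspaces of $L_{a+v}^\theta$ and isolate the residue whose vanishing is equivalent to the fusion law. Fix $\lambda,\mu\in\Spec(a)$ with $0\notin\lambda\star\mu$, and use Lemma~\ref{l:eig} to take $x+\lambda^{-1}\theta(a,x)\in(\A_\theta)_\lambda^{a+v}$ and $y+\mu^{-1}\theta(a,y)\in(\A_\theta)_\mu^{a+v}$ with $x\in\A_\lambda^a$, $y\in\A_\mu^a$. Because ${\rm V}$ is central, the product in $\A_\theta$ reduces to $xy+\theta(x,y)$. The axial decomposition in $\A$ reads $xy=\sum_{0\ne\nu\in\lambda\star\mu}z_\nu$ (with no $z_0$ contribution, since $0\notin\lambda\star\mu$ in the original fusion law), and rewriting each $z_\nu$ via Lemma~\ref{l:eig} as $(z_\nu+\nu^{-1}\theta(a,z_\nu))-\nu^{-1}\theta(a,z_\nu)$ yields
\[
xy+\theta(x,y)=\sum_{0\ne\nu\in\lambda\star\mu}\bigl(z_\nu+\nu^{-1}\theta(a,z_\nu)\bigr)+\Bigl(\theta(x,y)-\sum_{0\ne\nu\in\lambda\star\mu}\nu^{-1}\theta(a,z_\nu)\Bigr).
\]
The first sum lies in $\dot{+}_{\nu\in\lambda\star\mu}(\A_\theta)_\nu^{a+v}$, while the bracketed residue lies purely in ${\rm V}\subseteq(\A_\theta)_0^{a+v}$; hence the product lands in $(\A_\theta)_{\lambda\star\mu}^{a+v}$ exactly when this residue vanishes, which is precisely \eqref{axcond2}.

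To close the argument one must also handle the cases not listed in the statement: when $0\in\lambda\star\mu$ the residue is permitted to be non-zero (it already lies in the target $0$-eigenspace), and a potentially non-zero $z_0\in\A_0^a$ is admissible in $(\A_\theta)_0^{a+v}$ because \eqref{axcond1} forces $\A_0^a\subseteq\theta_a^\perp$; the cases with $\lambda=0$ or $\mu=0$ are treated identically, again relying on \eqref{axcond1} to place the $\A$-part of any element of $(\A_\theta)_0^{a+v}$ inside $\theta_a^\perp$. The main technical obstacle is exactly this bookkeeping of $\A$- and ${\rm V}$-contributions across the shift from $\A_\nu^a$ to $(\A_\theta)_\nu^{a+v}$; hypothesis \eqref{axcond1} is what makes the $0$-eigenspace analysis clean, so that the entire discrepancy concentrates in the single ${\rm V}$-valued expression controlled by \eqref{axcond2}.
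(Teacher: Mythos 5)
Your proposal is correct and follows essentially the same route as the paper's: the first claim via Lemmas~\ref{lm:crjone} and~\ref{lm:crjtwo}, and the second via the identical computation of the product of two shifted eigenvectors, rewritten as $\sum_{0\neq\nu\in\lambda\star\mu}\bigl(z_\nu+\nu^{-1}\theta(a,z_\nu)\bigr)$ plus a ${\rm V}$-valued residue in $(\A_\theta)_0^{a+v}$ whose vanishing is exactly condition~\eqref{axcond2}. The only (cosmetic) difference is that the paper carries the $z_0$ term through one computation valid for all $\lambda,\mu$ and then reads off both cases, whereas you specialise to $0\notin\lambda\star\mu$ first and dispose of the remaining cases (including $0\in\lambda\star\mu$ and the $0$-eigenspace products) verbally at the end.
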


	\begin{proof}
		The first part of the proposition follows trivially from Lemmas~\ref{lm:crjone} and~\ref{lm:crjtwo}.
		
		Take $a\in {\rm X}$ satisfying condition~\eqref{axcond1} and $v\in {\rm V}$. By Lemma~\ref{l:eig}, the eigenspace decomposition of $\A_{\theta}$ according to  $a+v$ follows the fusion law $(\mathcal{F}\cup \{0\},\odot)$ for some symmetric binary operation $\odot$.
		
		Throughout the rest of the proof, we will denote $\Ba=\A_{\theta}$ for the sake of simplicity. Lemma~\ref{l:eig} gives the description of $\Ba_{\lambda}^{a+v}$ for all $\lambda\in\Spec_{\theta}(a+v)$; moreover, under the present hypotheses, we can particularise  
		\[\Ba_0^{a+v}=\{x+v\in \Ba\mid x\in\A_0^{a}\}.\]

		Take $\lambda,\mu\in\Spec(a)$, $x+u\in\Ba_{\lambda}^{a+v}$ and $y+w\in\Ba_{\mu}^{a+v}$. Then,
		\begin{align*}
			&[x+u,y+w]_{\theta}=xy+\theta(x,y)=\\
			& \sum_{0\neq \nu\in\lambda\star \mu} (z_{\nu} + \nu^{-1}\theta(a,z_{\nu}) ) + \left(z_0 + \left(\theta(x,y)-\sum_{0\neq \nu\in\lambda\star \mu}\nu^{-1}\theta(a,z_{\nu})\right)\right)\in \sum_{0\neq \nu\in\lambda\star \mu}\Ba_{\nu}^{a+v}\oplus \Ba_0^{a+v}.
		\end{align*}
		It is clear that $\Ba_{\lambda}^{a+v}\Ba_{\mu}^{a+v}\subseteq \Ba_{\lambda\star\mu}^{a+v}$ if and only if $0\in\lambda\star \mu$ or condition~\eqref{axcond2} holds for every  $x\in\A_{\lambda}^{a}$ and every $y\in\A_{\mu}^{a}.$
		
		Also, if $0\notin \Spec(a)$, $\Ba_0^{a+v}\Ba_{\lambda}^{a+v}=\Ba_0^{a+v}\Ba_0^{a+v}=\{0\}$ for all $\lambda\in\Spec(a)$. The result follows.
	\end{proof}

	Note that conditions~\eqref{axcond1} and~\eqref{axcond2} do not depend on the representative of $[\theta]$. Set $\theta'=\theta + \delta f$ for some linear map $f\colon \A\to {\rm V}$, and take $x\in \A$ and $a\in {\rm X}$. Given $y\in\ker L_x$, we have that $\delta f(x,y)=f(xy)=0$, and therefore $\theta'$ satisfies condition~\eqref{axcond1} if and only if $\theta$ does. Also, given $\lambda,\mu\in\Spec(a)$ such that $0\notin\lambda\star \mu$, we can write
	\[\delta f(x,y)=f(xy)=\sum_{\nu\in\lambda\star\mu}f(z_{\nu})=\sum_{\nu\in\lambda\star\mu}\nu^{-1}f(az_{\nu}) =\sum_{\nu\in\lambda\star\mu}\nu^{-1}\delta f(a,z_{\nu}).\]
	We conclude that $\theta'$ satisfies condition~\eqref{axcond2} if and only if $\theta$ does.

	\begin{cor}\label{cor:d2}
		Let $(\A,{\rm X})$ be a two-dimensional axial algebra, ${\rm V}$ a vector space and $\theta\colon \A\times \A\to {\rm V}$ a non-zero symmetric bilinear map.
		Take an axis $a\in {\rm X}$. Then, condition~\eqref{axcond1} is satisfied if and only if $0\notin\Spec(a)$.
	\end{cor}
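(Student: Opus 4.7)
The plan is to reduce the statement to a direct basis computation. Since $\dim \A = 2$ and $\mathrm{X}$ is a minimal generating set of axes, it must be linearly independent, so $\mathrm{X} = \{a_1, a_2\}$ forms a basis of $\A$. Take $a = a_1$ without loss of generality. Because condition~\eqref{axcond1} only depends on the class $[\theta]$, I replace $\theta$ by its normalized representative satisfying $\theta(a_1, a_1) = \theta(a_2, a_2) = 0$. In dimension two this normalization is rigid: any linear map $f\colon \A \to \mathrm{V}$ whose coboundary vanishes on both diagonal entries of the basis must satisfy $f(a_1) = f(a_2) = 0$ and hence be identically zero. Therefore the off-diagonal scalar $c := \theta(a_1, a_2)$ is a complete class invariant, and the hypothesis $\theta \neq 0$ translates into $c \neq 0$.

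The forward implication is immediate: if $0 \notin \Spec(a)$ then $L_a$ is invertible, so $\ker L_a = 0 \subseteq \theta_a^{\perp}$. For the converse I argue by contrapositive. Assume $0 \in \Spec(a)$; since $L_a$ is diagonalisable on a two-dimensional space and $1 \in \Spec(a)$ is automatic, we must have $\Spec(a) = \{1, 0\}$ with $\A_1^a = \langle a \rangle$ and $\A_0^a$ one-dimensional. Fix a non-zero $y \in \A_0^a = \ker L_a$; then $\{a, y\}$ is also a basis of $\A$. Writing $a_2 = \alpha a + \beta y$, the linear independence of $\{a_1, a_2\}$ forces $\beta \neq 0$, and the normalization yields
\[c \;=\; \theta(a, a_2) \;=\; \alpha\,\theta(a,a) + \beta\,\theta(a,y) \;=\; \beta\,\theta(a,y),\]
so $\theta(a, y) = c/\beta \neq 0$. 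This exhibits an element of $\ker L_a$ outside $\theta_a^{\perp}$ and contradicts \eqref{axcond1}.

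The only genuine obstacle is the translation in the first paragraph of ``$\theta$ non-zero'' into ``$c \neq 0$''. This works precisely because in dimension two the basis of $\A$ is exhausted by the axes themselves, leaving no residual coboundary freedom after the diagonal normalization; the remaining content of the proof is a routine eigenspace expansion in the basis $\{a, y\}$.
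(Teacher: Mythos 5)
Your argument follows essentially the same route as the paper's proof: normalize $\theta$ so that it vanishes on the diagonal of the basis of axes, observe that the off-diagonal value must then be non-zero, and exhibit a non-zero element of $\ker L_a$ on which $\theta(a,\cdot)$ does not vanish. The one point worth flagging is your bridging claim that ``the hypothesis $\theta\neq 0$ translates into $c\neq 0$.'' Taken literally this is false: a non-zero coboundary $\theta=\delta f$ (which exists here since the algebra is perfect, e.g.\ $f(e_1)=1$, $f(e_2)=0$ in the algebra $A$ gives $\theta(e_1,e_1)=1$) normalizes to the zero map, so $c=0$ while $\theta\neq 0$. What your rigidity observation actually establishes is that $c$ is an invariant of the class $[\theta]$, so $c\neq 0$ if and only if $[\theta]\neq 0$ --- and for such $\theta$ condition~\eqref{axcond1} indeed always holds when $0\in\Spec(a)$ is replaced by $\theta$ being a coboundary, which shows the corollary needs $[\theta]\neq 0$ (or an already-normalized $\theta$) rather than $\theta\neq 0$. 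This is not a defect you introduced: the paper's own proof assumes $\theta(a,a)=\theta(b,b)=0$ and then invokes ``otherwise $\theta$ would be the zero map,'' which is the same tacit replacement of $\theta\neq 0$ by $[\theta]\neq 0$. Under that intended reading your proof is correct, and the final computation ($a_2=\alpha a+\beta y$ with $\beta\neq 0$, hence $\theta(a,y)=c/\beta\neq 0$) is just a slightly more explicit version of the paper's identification $\theta_a^{\perp}=\langle a\rangle$.
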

	
	\begin{proof}
		Assume that $0\in\Spec(a)$ and that $\{a,b\}$ is a minimal set of axes, with $\theta(a,a)=\theta(b,b)=0$, and note that $\{a,b\}$ is also a basis of $\A$. Note that $a\in \theta_a^{\perp}$ but $b \notin \theta_a^{\perp}$, as otherwise $\theta$ would be the zero map. Then, $\theta_a^{\perp}=\langle a \rangle$. By hypothesis, $\ker L_a$ is non-zero and $a\notin \ker L_a$. It follows that condition~\eqref{axcond1} is not satisfied. The converse is trivial.
	\end{proof}
	
	We introduce now a notion of cocycles for axial algebras. Note that we do not intend to relate them to any theory of cohomology for axial algebras; instead, the choice of the term ``cocycle'' is motivated because they will help to describe the extensions of axial algebras.
	
	\begin{defn}\label{d:coc}
		Let $(\A,{\rm X})$ be an $(\mathcal{F},\star)$-axial algebra, ${\rm V}$ a vector space and $\theta\colon \A\times \A\to {\rm V}$ a symmetric bilinear map. We say that $\theta$ is a cocycle relative to a subset ${\rm X}'\subseteq {\rm X}$ if condition~\eqref{axcond1} is satisfied for all $a\in {\rm X}'$ and if, for every $\lambda,\mu\in\mathcal{F}$ such that $0\notin \lambda\star\mu$, condition~\eqref{axcond2} holds for all $a\in {\rm X}'$ such that $\lambda,\mu\in\Spec(a)$, all  $x\in\A_{\lambda}^{a}$ and all $y\in\A_{\mu}^{a}$. The vector space formed by them will be denoted by $\Za(\A,{\rm V};{\rm X}')$.
	\end{defn}
	
	The next technical lemma will be needed for the main results of this section.
	
	\begin{lm}\label{l:gen}
		Let $(\A,{\rm X})$ be an axial algebra, ${\rm V}$ a vector space and $\theta\colon \A\times \A\to {\rm V}$ a symmetric bilinear map such that $\{[\theta_{\g}]\}_{\g\in\Gamma}$ are linearly independent. Assume that ${\rm X}=\{a_j\}_{j=1}^r$ is a minimal set of axes generating $\A$. Then, ${\rm X}$ is a minimal generating set for $\A_{\theta}$.
	\end{lm}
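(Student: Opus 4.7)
The plan is to let $\mathcal{B}$ be the subalgebra of $\A_{\theta}$ generated by ${\rm X}$ and show that $\mathcal{B}=\A_{\theta}$; once this is established, minimality is automatic, since a proper subset ${\rm X}'\subsetneq {\rm X}$ generating $\A_{\theta}$ would, after applying the projection $P\colon \A_{\theta}\to \A$, also generate $\A$, contradicting the minimality of ${\rm X}$ in $\A$. First I would note that $P$ is an algebra homomorphism with $P({\rm X})={\rm X}$, so $P(\mathcal{B})\supseteq \langle {\rm X}\rangle_{\A}=\A$, whence $P(\mathcal{B})=\A$. Setting $W\coloneqq \mathcal{B}\cap {\rm V}$ produces a short exact sequence $0\to W\to \mathcal{B}\to \A\to 0$, and the whole problem reduces to proving $W={\rm V}$.

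The next step is to produce a convenient linear section. Since ${\rm X}$ is linearly independent, I would extend it to a basis of $\A$, and use that $a_j\in \mathcal{B}$ automatically for every $a_j\in {\rm X}$ (taking lift $a_j$ itself) together with an arbitrary choice of lift in $\mathcal{B}$ for each additional basis element, to build a linear map $f\colon \A\to {\rm V}$ such that $f(a_j)=0$ for all $a_j\in {\rm X}$ and $z+f(z)\in \mathcal{B}$ for every $z\in \A$. The resulting vector-space decomposition reads $\mathcal{B}=\{z+f(z)\mid z\in \A\}\oplus W$.

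The key computation is then that $\theta-\delta f$ takes values in $W$: for any $z_1,z_2\in \A$,
\[
(z_1+f(z_1))(z_2+f(z_2))=z_1z_2+\theta(z_1,z_2)\in \mathcal{B},
\]
and subtracting $z_1z_2+f(z_1z_2)\in \mathcal{B}$ leaves $\theta(z_1,z_2)-f(z_1z_2)=(\theta-\delta f)(z_1,z_2)\in W$.

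The main obstacle is the final conversion of the relation $\theta\equiv \delta f \pmod{W}$ into a contradiction with the linear independence of $\{[\theta_{\gamma}]\}_{\gamma\in\Gamma}$. Assuming for contradiction that $W\subsetneq {\rm V}$, I would choose a non-zero linear functional $\psi\colon {\rm V}\to F$ vanishing on $W$. Applying $\psi$ to the previous relation yields $\psi\theta=\delta(\psi f)$, and expanding $\psi\theta=\sum_{\gamma}\psi(e_{\gamma})\,\theta_{\gamma}$ in the fixed basis $\{e_{\gamma}\}$ of ${\rm V}$ exhibits a non-trivial linear combination of the $\theta_{\gamma}$ equal to a coboundary, contradicting the hypothesis that $\{[\theta_{\gamma}]\}$ is linearly independent. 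Hence $W={\rm V}$ and $\mathcal{B}=\A_{\theta}$, finishing the proof.
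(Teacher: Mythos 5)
Your proof is correct, but it takes a genuinely different route from the paper's. The paper proves $\A\subseteq\langle {\rm X}\rangle_{\theta}$ directly: it writes each basis element of $\A$ as a polynomial in the axes, uses the coefficients of that expression to build an explicit coboundary correction $\delta f$, and then runs an induction on the length of products of axes to show that each basis element lies in the subalgebra generated by ${\rm X}$ in $\A_{\theta'}$; only afterwards does it invoke linear independence of $\{[\theta_{\g}]\}$ to absorb ${\rm V}$. You instead exploit that $P$ restricted to $\mathcal{B}=\langle{\rm X}\rangle_{\theta}$ is already surjective onto $\A$ (since $P$ is a homomorphism fixing ${\rm X}$), use this to manufacture a linear section $z\mapsto z+f(z)$ into $\mathcal{B}$, and observe that the subalgebra property forces $\theta-\delta f$ to take values in $W=\mathcal{B}\cap{\rm V}$, so that $W\neq{\rm V}$ would contradict linear independence via a functional vanishing on $W$. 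This is shorter, avoids the induction entirely, and makes explicit the functional argument that the paper compresses into the phrase ``by the linear independence of $\{[\theta_{\g}]\}$'' (in both proofs this step is airtight for finite-dimensional ${\rm V}$, which is the setting in which the lemma is applied). What the paper's more laborious construction buys is quantitative control: the explicit $f$ built from products of axes is what later justifies Proposition~\ref{p:prop}(4), that an $m$-closed algebra has at most $(m+1)$-closed extensions, a byproduct your abstract section does not provide. Your minimality argument at the end coincides with the paper's.
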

	
	\begin{proof}
		Let us denote by $\langle {\rm X}\rangle$ the subalgebra of $\A$ generated by ${\rm X}$, by $\langle {\rm X}\rangle_{\theta}$ the subalgebra of $\A_{\theta}$ generated by ${\rm X}$.
		Once we prove that $\A\subseteq \langle {\rm X}\rangle_{\theta}$, 
		we will know that $\langle {\rm X}\rangle_{\theta}=\A_{\theta}$ 
		by the linear independence of $\{[\theta_{\g}]\}_{\g\in\Gamma}$. 
		
		Let $B=\{a_j\}_{j\in J}$ be a basis of $\A$ extending ${\rm X}$, and denote $J_r=J\setminus\{1,\dots,r\}$. Since $\A=\langle {\rm X}\rangle$,  we can express each $a_j$, for $j\in J_r$, as a finite sum $a_j=\sum_{l=1}^{m_j}\alpha_{j,l}\prod {\rm X}_{j,l}$, where $\alpha_{j,l}\in F$ and $\prod {\rm X}_{j,l}$ denotes a product of elements of ${\rm X}$ with a certain arrangement of brackets. 
		Set $\prod {\rm X}_{j,l} = (\prod {\rm X}^1_{j,l})(\prod {\rm X}^2_{j,l})$, where $\prod {\rm X}^1_{j,l}$ and $\prod {\rm X}^2_{j,l}$ are products of elements of $\rm X$ with strictly smaller length than $\prod {\rm X}_{j,l}$. Set also $\omega_j=\sum_{l=1}^{m_j}\alpha_{j,l}\theta(\prod {\rm X}^1_{j,l}, \prod {\rm X}^2_{j,l})$, and define the homomorphism $f\colon \A\to {\rm V}$ by $f(a_j)=0$ for $j=1,\dots,r$, and $f(a_j)=\omega_j$, for $j\in J_r$. Consider $\theta'=\theta-\delta f$. Since $\A_{\theta}$ and $\A_{\theta'}$ are isomorphic and ${\rm X}$ is preserved by the isomorphism, it is enough to show that  $\A\subseteq \langle {\rm X}\rangle_{\theta'}$. We proceed by induction in the largest length $L$ of the products $\prod {\rm X}_{j,l}$ for $l=1,\dots, m_j$. If $L=1$, it is trivial that $a_k\in \langle {\rm X}\rangle_{\theta'}$. In the general case,
		\begin{longtable}{rcl}
			$\sum\limits_{l=1}^{m_k}$&$\alpha_{k,l}$&$\left(\left[\prod {\rm X}^1_{k,l},\prod {\rm X}^2_{k,l}\right]_{\theta'}\right)$\\
			&$=$&$\sum\limits_{l=1}^{m_k}\alpha_{k,l}\prod {\rm X}_{k,l}+\sum\limits_{l=1}^{m_k}\alpha_{k,l}\theta'\left(\prod {\rm X}^1_{k,l}, \prod {\rm X}^2_{k,l}\right)-\sum\limits_{l=1}^{m_k}\alpha_{k,l}\left(\delta f\left(\prod {\rm X}^1_{k,l},\prod {\rm X}^2_{k,l}\right)\right)$\\
			&$=$&$a_k+\omega_k- f\left(\sum\limits_{l=1}^{m_k}\alpha_{k,l} \prod {\rm X}_{k,l}\right)
			= a_k+\omega_k-f(a_k)=a_k,$
		\end{longtable}
		\noindent and by induction $a_k\in \langle {\rm X}\rangle_{\theta'}$.
		
		Finally, we prove the minimality of ${\rm X}$. If there existed a subset ${\rm X}'\subset {\rm X}$ generating $\A_{\theta}$, $P({\rm X}')={\rm X}'\subset {\rm X}$ would be a set of axes generating $P(\A_{\theta})=\A$, a contradiction.
	\end{proof}
	
	We put together all the previous results in the following proposition.
	
	\begin{prop}\label{prop:min}
		Let $(\A,{\rm X})$ be an axial algebra, ${\rm V}$ a vector space and $\theta\colon \A\times \A\to {\rm V}$ a symmetric bilinear map such that $\{[\theta_{\g}]\}_{\g\in\Gamma}$ are linearly independent. Assume that ${\rm X}=\{a_j\}_{j=1}^r$ is a minimal set of axes generating $\A$. The pair $(\A_{\theta},{\rm X})$ is $(\mathcal{F}\cup\{0\},\odot)$-axial if and only if condition~\eqref{axcond1} holds for all $j=1,\dots,r$, for some fusion law $(\mathcal{F}\cup\{0\},\odot)$ containing $(\mathcal{F},\star)$. Furthermore, we can take $\odot=\star$ if and only if $\theta\in\Za(\A,{\rm V};{\rm X})$.
	\end{prop}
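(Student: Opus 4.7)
The plan is to verify the three axial axioms for $(\A_\theta,{\rm X})$ to be $(\F\cup\{0\},\odot)$-axial, and then to decide when $\odot$ can be kept equal to $\star$. By Lemma~\ref{l:gen} the set ${\rm X}$ already generates $\A_\theta$ (and remains minimal there), so generation is free. First I would subtract the coboundaries $\sum_{j=1}^{r}\delta f_j$ exactly as in~\eqref{eq:thid0} to replace $\theta$ by a cohomologous representative with $\theta(a_j,a_j)=0$ for every $j$; this does not alter the isomorphism class of $\A_\theta$, and the invariance of~\eqref{axcond1} and~\eqref{axcond2} under coboundaries, noted right after Proposition~\ref{prop:ss}, guarantees that the hypotheses and the conclusions to be checked are preserved. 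With this normalisation every $a_j$ is already idempotent in $\A_\theta$.

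For the first equivalence, suppose~\eqref{axcond1} holds for every $a_j$. Proposition~\ref{prop:ss} then renders each $a_j$ semisimple in $\A_\theta$, and Lemma~\ref{l:eig} gives $\Spec_\theta(a_j)=\Spec(a_j)\cup\{0\}\subseteq\F\cup\{0\}$. The calculation displayed in the proof of Proposition~\ref{prop:ss} yields
\[(\A_\theta)_\l^{a_j}\,(\A_\theta)_\mu^{a_j}\subseteq \sum_{0\ne\nu\in\l\star\mu}(\A_\theta)_\nu^{a_j}\;+\;(\A_\theta)_0^{a_j},\]
so I would define $\l\odot\mu:=(\l\star\mu)\cup\{0\}$ for $\l,\mu\in\F$ and $\l\odot 0:=\{0\}$; this symmetric binary operation on $\F\cup\{0\}$ contains $(\F,\star)$ and makes each $a_j$ an $(\F\cup\{0\},\odot)$-axis, so $(\A_\theta,{\rm X})$ is axial. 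Conversely, if $(\A_\theta,{\rm X})$ is $(\F\cup\{0\},\odot)$-axial for some fusion law containing $(\F,\star)$, then each $a_j$ is in particular semisimple in $\A_\theta$, so~\eqref{axcond1} follows directly from Proposition~\ref{prop:ss}.

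For the ``furthermore'' clause, I would observe that $\odot$ can be taken equal to $\star$ precisely when the component of $(\A_\theta)_\l^{a_j}(\A_\theta)_\mu^{a_j}$ lying in $(\A_\theta)_0^{a_j}$ vanishes whenever $0\notin\l\star\mu$. The obstruction is the element
\[\theta(x,y)-\sum_{\nu\in\l\star\mu}\nu^{-1}\theta(a_j,z_\nu),\]
and its vanishing for every $a_j\in{\rm X}$, every $\l,\mu\in\Spec(a_j)$ with $0\notin\l\star\mu$, and all $x\in\A_\l^{a_j}$, $y\in\A_\mu^{a_j}$ is exactly condition~\eqref{axcond2} applied to every axis in ${\rm X}$. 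Combined with~\eqref{axcond1}, this is by Definition~\ref{d:coc} the statement that $\theta\in\Za(\A,{\rm V};{\rm X})$; the reverse implication is a direct application of the full Proposition~\ref{prop:ss} to each axis. The main obstacle is not a deep one: the whole argument is a careful collation of Lemmas~\ref{l:gen} and~\ref{l:eig} with Proposition~\ref{prop:ss}, and the only care needed is tracking the coboundary normalisation of $\theta$ uniformly across all axes in ${\rm X}$.
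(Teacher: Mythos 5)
Your proposal is correct and follows the paper's own route: normalise $\theta$ so that $\theta(a_j,a_j)=0$ using the minimality of ${\rm X}$, invoke Lemma~\ref{l:gen} for generation, and read off semisimplicity and the fusion law from Lemma~\ref{l:eig} and the two conditions of Proposition~\ref{prop:ss}. The only detail to watch is that your clause $\l\odot 0:=\{0\}$ should be understood as applying only to the adjoined eigenvalue $0\notin\F$ (when $0\in\F$ your first clause already governs $\l\odot 0$), but this is exactly the convention the paper uses in the analogous definition inside the proof of Theorem~\ref{th:gen}.
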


	\begin{proof}
		As ${\rm X}$ is minimal, we may assume that $\theta(a_j,a_j)=0$ for all $j=1,\dots,r$. Simply observe that
		the elements $a_j$ are idempotents in $\A_{\theta}$ for $j=1,\dots,r$, and use Lemma~\ref{l:eig}, Lemma~\ref{l:gen} and Proposition~\ref{prop:ss}.
	\end{proof}

	Note that, if $(\mathcal{F},\star)$ is a minimal fusion law for $(\A,{\rm X})$ in the sense that $(\A,{\rm X})$ is not axial for any fusion law strictly contained in
	$(\mathcal{F},\star)$, one has that $(\mathcal{F}\cup\{0\},\odot)$ is minimal for $(\A_{\theta},{\rm X})$, too.

	\begin{exam}\label{ex:eax2}
		Consider the two-dimensional axial algebras over $\mathbb{C}$ described in Example~\ref{ex:ax2}. By Corollary~\ref{cor:d2}, only $B$, $C(\a)$, $D(\b)$ $($with respect to $\{e_1,a_6\}$$)$, $E(\a,\b)$, $G(\b)$, $H(\g)$ and $I$ admit an axial central extension. We select their commutative non-split central extensions of dimension $1$ from the classification in~\cite{cfk}: all of them are given by the representative $\theta$ determined by $\theta(e_i,e_i)=0$ for $i\in\{1,2\}$ and $\theta(e_1,e_2)=1$. Now we can apply Proposition~\ref{prop:min} to find out their axial structures, shown in Table~\ref{t:eax2}. Note that $\theta\notin\Za(\A,{\rm V};{\rm X})$ in any instance. To describe the corresponding fusion laws, we follow the same conventions as in Table~\ref{t:flax2}.

		%
		
		\

		{\tiny
			\begin{longtable}{|c|c|c|cc|}
				\hline
				$\A_{\theta}$  & ${\rm X}$ & $(\G,\odot)$ & $\odot$ & \\
				
				\hline \hline  \multirow{3}{*}{$B_{\theta}$} & $\{e_1,e_2\}$ &
				\multirow{3}{*}{$(\F_B\cup \{0\},\odot_B)$} & \multirow{3}{*}{$ (-1) \odot_B (-1) =   \{1,0\} $} &\\ \cline{2-2}
				& $\{e_1,a_4\}$ & & & \\ \cline{2-2}
				& $\{e_2,a_4\}$ & & & \\
				\hline \hline
				
				\multirow{3}{*}{$C(\a)_{\theta}$} & $\{e_1,e_2\}$ & $(\F_{C1}\cup \{0\},\odot_{C1})$ & $ \a \odot_{C1} \a =   \{1,\a,0\} $&\\  \cline{2-5}
				& $\{e_1,a_5\}$ & \multirow{2}{*}{$(\F_{C2}\cup \{0\},\odot_{C2})$} & \multirow{2}{*}{$ \a \odot_{C2} \a =   \{1,\a,0\} $} & \multirow{2}{*}{$\l \odot_{C2} \l = \{1,0\}$}  \\ \cline{2-2}
				& $\{e_2,a_5\}$ & & &\\
				\hline \hline
				
				$D(\b)_{\theta}$ & $\{e_1,a_6\}$ & $(\F_{D2}\cup \{0\},\odot_{D2})$ & $ \beta \odot_{D} \beta =   \{\beta,0\} $ &
				$(1-\beta) \odot_{D} (1-\beta) = 1-\beta $ \\ 
				\hline \hline
				
				\multirow{3}{*}{$E(\a,\b)_{\theta}$} & $\{e_1,e_2\}_{(\a\neq 1)}$ & $(\F_{E1}\cup \{0\},\odot_{E1})$ & $ \beta \odot_{E1} \beta =   \{1,\beta,0\} $ &
				$ \l \odot_{E1} \l  = \{1,\l, 0\} $ \\ \cline{2-5} 
				& $\{e_1,a_7\}$ & $(\F_{E2}\cup \{0\},\odot_{E2})$ & $ \a \odot_{E2} \a =   \{1,\a,0\} $ &
				$ \beta \odot_{E2} \beta  = \{1,\beta, 0\} $ \\ \cline{2-5}
				& $\{e_2,a_7\}_{(\a\neq 1)}$  & $(\F_{E3}\cup \{0\},\odot_{E3})$ & $ \a \odot_{E3} \a =   \{1,\a,0\} $ &
				$ \l \odot_{E3} \l  = \{1,\l, 0\} $ \\
				
				\hline \hline $G(\b)_{\theta}$ & $\{e_1,e_2\}$ & $(\F_{G}\cup \{0\},\odot_{G})$ & $ \beta \odot_{G} \beta =   \{1,\beta,0\} $ &
				$ 1/2 \odot_{G} 1/2  = \{1, 1/2, 0\} $ \\
				\hline \hline $H(\g)_{\theta}$ & $\{e_1,e_2\}$ & $(\F_{H}\cup \{0\},\odot_{H})$ & $ \frac{1}{2\gamma}  \odot_{H} \frac{1}{2\gamma}   =   \{1,\frac{1}{2\gamma},0\} $ &
				$  \gamma/2 \odot_{H} \gamma/2  = \{1, \gamma/2, 0\} $ \\
				\hline \hline $I_{\theta}$ & $\{a_{\a},a_{\b}\}_{\a\neq\b}$ & $(\F_{I}\cup \{0\},\odot_{I})$ & $ 1/2  \odot_{I} 1/2   =   0$ & \\
				\hline 
				\multicolumn{5}{c}{ }\\

				\caption{Axial central extensions of complex axial algebras of dimension $2$.}
				\label{t:eax2}
		\end{longtable}		}

	\end{exam}

	We highlight now that, in dimension greater than $2$, it is possible to simultaneously have $0\in \Spec(a)$  and condition~\eqref{axcond1} satisfied (cf. Corollary~\ref{cor:d2}). Indeed, consider the axial algebra $(\A,{\rm X})=(I_{\theta},\{e_1,e_2\})$, the axis $a=e_1$ and the bilinear map $\theta'\colon \A \times \A\to \mathbb{C}$ defined by $\theta'(e_2,e_3)=1$ as the only non-zero slot (note that $[\theta']\neq 0$). Then,  $(\theta')_a^{\perp}=\A$, so condition~\eqref{axcond1} is satisfied. However, $0\in \Spec(a)$, since $ae_3=0$.
	
	Note that none of the bilinear maps $\theta$ of Example~\ref{ex:eax2} is a cocycle in the sense of Definition~\ref{d:coc}. We present now an example, already known in the literature (see~\cite[Section 3.5]{Y}), to illustrate that this is not necessarily the case.
	
	\begin{exam}
		Let $F$ be a field of characteristic not $2$, and $\A$ the $4$-dimensional algebra over $F$ with basis $\{a_{-1},a_0,a_1,a_2\}$ and commutative product given by
		\begin{longtable}{lcl}
			$a_ia_i$ & $=$ & $a_i,\quad i=-1,\dots,2;$\\
			$a_ia_{i+1}$ &$=$&$2(a_i+a_{i+1})-\frac{1}{2}(a_{-1}+2a_0+2a_1+a_2),\quad i=-1,0,1;$\\
			$a_{-1}a_2$ &$=$&$\frac{1}{2}(a_{-1}+a_2);$\\
			$a_{i}a_{i+2}$ & $=$&$a_{i-1} +a_i-a_{i+1}, \quad i=-1,0,$
		\end{longtable}
		\noindent where we understand that $a_{-2}=-a_{-1}+a_1+a_2$. It is routine to check that $(\A,\{a_0,a_1\})$ is an axial algebra of Monster type $(2,\frac{1}{2})$ (i.e. regarding the fusion law $\mathcal{M}(2,\frac{1}{2})$ displayed in Table~\ref{t:m}) with eigenspaces
		
		\begin{longtable}{rcl}
			$\A_{1}^{a_0}$&$=$&$F a_0;$\\
			$\A_{0}^{a_0}$&$=$&$F (a_{-1}+2a_0-a_1-2a_2)\eqqcolon F u;$\\
			$\A_{2}^{a_0}$&$=$&$F(a_{-1}-a_1)\eqqcolon F v;$\\
			$\A_{1/2}^{a_0}$&$=$&$F(a_{-1}-a_2)\eqqcolon F w;$\\
			$\A_{1}^{a_1}$&$=$&$F a_1;$\\
			$\A_{0}^{a_1}$&$=$&$F (2a_{-1}+a_0-2a_1-a_2)\eqqcolon F u';$\\
			$\A_{2}^{a_1}$&$=$&$F(a_{0}-a_2)\eqqcolon F v';$\\
			$\A_{1/2}^{a_1}$&$=$&$F(a_{-1}-a_2)= F w.$
		\end{longtable}
		
		\begin{center}
			\begin{longtable}{|c||c|c|c|c|}
				\hline
				$\star$ & $1$ & $0$ & $2$ & $1/2$ \\
				\hline 
				\hline
				$1$ & $1$ &  & $2$ & $1/2$ \\
				\hline
				$0$ &  & $0$ & $2$ & $1/2$ \\
				\hline
				$2$ & $2$ & $2$ & $\{1,0\}$ & $1/2$ \\
				\hline
				$1/2$ & $1/2$ & $1/2$ & $1/2$ & $\{1,0,2\}$ \\
				\hline
				
				\caption{Fusion law $\mathcal{M}(2,1/2)$.}
				\label{t:m}
			\end{longtable}
			
		\end{center}

		Take a symmetric bilinear map $\theta\colon\A\times \A\to F$ with $[\theta]\neq 0$. Then, $\theta$ is a cocycle relative to $\{a_0,a_1\}$ if and only if the following equations are satisfied:
		
		\
		
		\begin{longtable}{rcl}
			$\theta(a_0,u)$&$=$&$0;$\\
			$\theta(u,v)$&$=$&$\frac{1}{2}\theta(a_0,uv);$ \\
			$\theta(u,w)$&$=$&$2\theta(a_0,uw);$ \\
			$\theta(v,w)$&$=$&$2\theta(a_0,vw);$\\
			$\theta(a_1,u')$&$=$&$0;$\\
			$\theta(u',v')$&$=$&$\frac{1}{2}\theta(a_1,u'v');$\\
			$\theta(u',w)$&$=$&$2\theta(a_1,u'w);$\\
			$\theta(v',w)$&$=$&$2\theta(a_1,v'w).$
		\end{longtable}
		
		Assuming $\theta(a_i,a_i)=0$ for $i=-1,\dots,2$, the equations above give rise to an easy system with solution
		\begin{longtable}{rclclcl}
			$\theta(a_{-1},a_1)$&$=$&$\theta(a_0,a_2)$&$=$&$0,$\\
			$\theta(a_{-1},a_0)$&$=$&$\theta(a_{-1},a_2)$&$=$&$\theta(a_0,a_1)$&$=$&$\theta(a_1,a_2).$\\
		\end{longtable}
		Take  $\theta$ in such conditions and set $\theta(a_{-1},a_0)=1$. Then $[\theta]\neq 0$, $\theta\in\Za(\A,F;\{a_0,a_1\})$  and $(\A_{\theta},\{a_0,a_1\})$ is an axial algebra of Monster type $(2,\frac{1}{2})$ by Proposition~\ref{prop:min}. In particular, $\A_{\theta}$ is the algebra ${\rm IV}_3(\frac{1}{2},2)$ of~\cite{Y}.
	\end{exam}

	The next result deals with the general case in which the set ${\rm X}$ of generating axes of $(\A,{\rm X})$ is not minimal. Recall the notation ${\rm X}^i$ introduced in the first part of this section.

	\begin{thm}\label{th:gen}
		Let $(\A,{\rm X})$ be an axial algebra, ${\rm V}$ a vector space and $\theta\colon \A\times \A\to {\rm V}$ a symmetric bilinear map such that $\{[\theta_{\g}]\}_{\g\in\Gamma}$ are linearly independent. The pair $(\A_{\theta},{\rm X}^i)$ is $(\mathcal{F}\cup\{0\},\odot^i)$-axial if and only if condition~\eqref{axcond1} holds for all $a^i_j\in {\rm X}^i$, $j=1,\dots,r^i$, for some fusion law $(\mathcal{F}\cup\{0\},\odot^i)$ containing $(\mathcal{F},\star)$.
		Furthermore, we can take $\odot^i=\star$ if and only if $\theta\in\Za(\A,{\rm V};{\rm X}^i)$.
		
		Also, define the set
		\[{\rm Y}=\{a+\theta(a,a)\mid a\in {\rm X}\}.\] 
		For every ${\rm Y}'\subseteq {\rm Y}$ such that there exists $i\in I$ with ${\rm X}^i\subseteq P({\rm Y}')$, $(\A_{\theta},{\rm Y}')$ is $(\mathcal{F}\cup\{0\},\odot)$-axial for some fusion law $(\mathcal{F}\cup\{0\},\odot)$ containing $(\mathcal{F},\star)$ if and only if condition~\eqref{axcond1} is satisfied for all $a\in P({\rm Y'})$. We can take $\odot=\star$ if and only if $\theta\in\Za(\A,{\rm V};P({\rm Y}'))$.
	\end{thm}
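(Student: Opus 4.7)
The initial assertion about $(\A_\theta,{\rm X}^i)$ is essentially a restatement of Proposition~\ref{prop:min} for the minimal set ${\rm X}^i$: we would observe that, by construction, ${\rm X}^i$ is a minimal generating set of axes of $\A$, and that $\theta$ can be replaced by the cohomologous $\theta^i$ with $\theta^i(a^i_k,a^i_k)=0$ for all $k$, without affecting either the isomorphism class of the extension or the validity of conditions~\eqref{axcond1} and~\eqref{axcond2}. Proposition~\ref{prop:min} then delivers both halves of the equivalence, and the cocycle condition governs precisely when we may take $\odot^i=\star$.

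For the second statement we would first check that each $a+\theta(a,a)\in\A_\theta$ is idempotent: indeed, $(a+\theta(a,a))^2=a\cdot a+\theta(a,a)=a+\theta(a,a)$, using that $\theta(a,a)\in{\rm V}\subseteq\Ann(\A_\theta)$. The key step is then to show that ${\rm Y}'$ generates $\A_\theta$. Setting $\Theta\coloneqq\sum_{j=1}^{r^i}f^i_j$, we have $\theta^i=\theta-\delta\Theta$, and the map $\varphi\colon\A_\theta\to\A_{\theta^i}$, $\varphi(x+v)=x+v-\Theta(x)$, is an algebra isomorphism. Using the definition $f^i_j(a^i_k)=\theta(a^i_j,a^i_j)\delta_{jk}$ one checks $\Theta(a^i_k)=\theta(a^i_k,a^i_k)$, so $\varphi$ sends each $a^i_k+\theta(a^i_k,a^i_k)$ to $a^i_k$. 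Since ${\rm X}^i$ generates $\A_{\theta^i}$ by Lemma~\ref{l:gen}, pulling back through $\varphi$ shows that $\{a+\theta(a,a)\mid a\in{\rm X}^i\}\subseteq{\rm Y}'$, and therefore ${\rm Y}'$ itself, generates $\A_\theta$.

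To decide when each $a+\theta(a,a)\in{\rm Y}'$ is actually an axis, we would apply Proposition~\ref{prop:ss} with $x=a$ and $v=\theta(a,a)\in{\rm V}$: it yields semisimplicity with spectrum in $\mathcal{F}\cup\{0\}$ iff condition~\eqref{axcond1} holds for $a$, and preservation of the unbroadened fusion law $(\mathcal{F},\star)$ iff condition~\eqref{axcond2} holds for all $\lambda,\mu\in\Spec(a)$ with $0\notin\lambda\star\mu$. Running this over every $a\in P({\rm Y}')$ yields condition~\eqref{axcond1} throughout, and, when we insist on $\odot=\star$, the cocycle condition $\theta\in\Za(\A,{\rm V};P({\rm Y}'))$. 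Allowing $0$ into each product $\lambda\odot\mu$ absorbs any failure of~\eqref{axcond2} into the $0$-eigenspace of $\A_\theta$, so only~\eqref{axcond1} is needed when enlargement of the law is permitted; the ``only if'' directions follow from Lemma~\ref{lm:crjtwo} and the converse halves of the same results.

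The main obstacle we anticipate is the generation step: without the identity $\Theta(a^i_k)=\theta(a^i_k,a^i_k)$, it is not transparent that the modified axes $a+\theta(a,a)$ for $a\in{\rm X}^i$ produce all of $\A_\theta$ under products, since they live simultaneously in the $\A$- and ${\rm V}$-components of $\A_\theta$. Once this identity is unpacked and $\varphi$ is recognised as the standard coboundary isomorphism, the rest of the argument is a direct assembly of Lemma~\ref{l:gen}, Lemma~\ref{lm:crjtwo} and Proposition~\ref{prop:ss}.
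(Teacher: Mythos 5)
Your proposal is correct and follows essentially the same route as the paper: the first part is reduced to Proposition~\ref{prop:min}, and the second part combines the idempotency of the elements of ${\rm Y}$, generation via Lemma~\ref{l:gen}, and Proposition~\ref{prop:ss} (with Lemma~\ref{lm:crjtwo} for the converse) to characterise when the shifted idempotents are axes and when $\odot=\star$ can be taken. Your explicit coboundary isomorphism $\varphi$ sending $a^i_k+\theta(a^i_k,a^i_k)$ to $a^i_k$ merely spells out a generation step that the paper leaves implicit when it invokes Lemma~\ref{l:gen} for the shifted set.
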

	
	\begin{proof}
		The first part follows directly from Proposition~\ref{prop:min}.
		
		Regarding the second part, it is clear that ${\rm Y}$ is composed of idempotent elements. By Lemma~\ref{l:gen}, the set $\{a^i_j+\theta(a^i_j,a^i_j)\}_{j=1}^{r^i}$ generates $\A_{\theta}$,  and consequently, so does ${\rm Y}'$.
		Finally, given $\lambda,\mu\in\mathcal{F}$, set $\lambda\odot \mu=\lambda\star\mu$ if $\theta$ is a cocycle relative to $P({\rm Y}')$ or $\lambda\odot \mu=\lambda\star\mu\cup\{0\}$ otherwise, and $0\odot 0=0\odot \lambda=\emptyset$ for all $\lambda\in \mathcal{F}$ if $0\notin\mathcal{F}$. Using Proposition~\ref{prop:ss}, it is obvious that the fusion law $(\mathcal{F}\cup\{0\},\odot)$ defined in this way satisfies the conditions of the theorem.
	\end{proof}

	Note that, contrary to the situation described after Proposition~\ref{prop:min}, if $(\mathcal{F},\star)$ is a minimal fusion law for $(\A,{\rm X})$ and $(\A_{\theta},{\rm Y})$ is axial, with the notation of Theorem~\ref{th:gen}, $(\mathcal{F}\cup\{0\},\odot)$ does not need to be minimal for $(\A_{\theta},{\rm Y})$.

	Assume that the fusion law $(\F,\star)$ is $\zd$-graded and that the ground field has characteristic not $2$. It would be interesting to know if, given $\theta\colon \A\times \A \to {\rm V}$ and a certain set of axes ${\rm X}'\subseteq {\rm X}$, the condition $\theta\in\Za(\A,{\rm V};{\rm X}')$ is stable in the sense of~\cite{axial}; i.e. given another set of axes ${\rm X}''\subseteq {\rm X}$ such that $\overline{{\rm X}'}=\overline{{\rm X}''}$, than $\theta\in\Za(\A,{\rm V};{\rm X}'')$. 
	\begin{prop}\label{prop:stab}
		Let $(\A,{\rm X})$ be an axial algebra over a field of characteristic not $2$, ${\rm X}'\subseteq {\rm X}$ a subset, ${\rm V}$ a vector space and $\theta\colon \A\times \A\to {\rm V}$ a symmetric bilinear map such that $\{[\theta_{\g}]\}_{\g\in\Gamma}$ are linearly independent. Furthermore, assume that $(\F,\star)$ admits a $\zd$-grading. Then, the condition $\theta\in\Za(\A,{\rm V};{\rm X}')$ is stable.
	\end{prop}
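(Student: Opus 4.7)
My plan is to transport the cocycle question from $\A$ to the extended algebra $\A_\theta$ via Theorem~\ref{th:gen}, exploit the compatibility of Miyamoto involutions on both sides through the projection $P\colon \A_\theta\to \A$, and then read off the cocycle conditions for the axes of ${\rm X}''$ from the axial structure that $\A_\theta$ already carries.

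The first step is to apply Theorem~\ref{th:gen} to $(\A,{\rm X}')$ and $\theta\in\Za(\A,{\rm V};{\rm X}')$: this yields an $(\F\cup\{0\},\star)$-axial algebra $(\A_\theta,{\rm Y}')$, where ${\rm Y}'=\{c+\theta(c,c)\mid c\in {\rm X}'\}$. The $\zd$-grading of $(\F,\star)$ then lifts to $(\F\cup\{0\},\star)$ in the unique way compatible with the algebra structure of $\A_\theta$, namely by placing the (possibly new) element~$0$ in the trivial component $\F_1$; this is forced because ${\rm V}\subseteq(\A_\theta)_0^{\tilde c}$ is central and hence fixed by any Miyamoto involution. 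The core geometric ingredient is then the intertwining relation
\[P\circ \phi_{\tilde c}^{\A_\theta}=\phi_c^\A\circ P\qquad\text{for every }c\in{\rm X}',\]
which I would derive from Lemma~\ref{l:eig} by noting that $P$ sends $(\A_\theta)_\lambda^{\tilde c}$ onto $\A_\lambda^c$ for each $\lambda\in\F$ and that both involutions act as multiplication by $\chi(t)$ on the respective $\F_t$-components (here $\chi$ is the non-trivial character of $\zd$). Composing these intertwinings lifts every $\phi\in G({\rm X}')$ to some $\tilde\phi\in G({\rm Y}')$ with $P\circ\tilde\phi=\phi\circ P$, whence for each $a\in\overline{{\rm X}'}$ there is a $v_a\in{\rm V}$ such that $\tilde a:=a+v_a\in\overline{{\rm Y}'}$ is an $(\F\cup\{0\},\star)$-axis of $\A_\theta$.

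With this lifting in place, I would fix $a\in{\rm X}''$: since $\overline{{\rm X}''}=\overline{{\rm X}'}$ we have $a\in\overline{{\rm X}'}$, and the previous step provides $\tilde a=a+v_a\in\overline{{\rm Y}'}$. Diagonalisability of $L_{\tilde a}^\theta$ together with Lemma~\ref{lm:crjtwo} would give condition~\eqref{axcond1} for~$a$. For~\eqref{axcond2}, I would take $\lambda,\mu\in\Spec(a)$ with $0\notin\lambda\star\mu$ and $x\in\A_\lambda^a$, $y\in\A_\mu^a$, lift them via Lemma~\ref{l:eig} to $x+\lambda^{-1}\theta(a,x)\in(\A_\theta)_\lambda^{\tilde a}$ and $y+\mu^{-1}\theta(a,y)\in(\A_\theta)_\mu^{\tilde a}$, compute their product $xy+\theta(x,y)$ in $\A_\theta$, and observe that by the $\star$-fusion law for $\tilde a$ it must lie in $\dot{+}_{\nu\in\lambda\star\mu}(\A_\theta)_\nu^{\tilde a}$; equating ${\rm V}$-components then yields exactly $\theta(x,y)=\sum_{\nu\in\lambda\star\mu}\nu^{-1}\theta(a,z_\nu)$. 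The main obstacle I foresee is making the Miyamoto intertwining and the lift $\overline{{\rm X}'}\to\overline{{\rm Y}'}$ fully rigorous, in particular controlling the $0$-eigenspace, where $P$ has non-trivial kernel and where the $\zd$-grading on the enlarged fusion law must be placed carefully; once this geometric bridge is secured, the remainder of the argument is routine bookkeeping with Lemmas~\ref{lm:crjone}, \ref{lm:crjtwo} and~\ref{l:eig}.
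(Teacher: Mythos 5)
Your proposal is correct and follows essentially the same route as the paper: both arguments reduce the claim to showing that every axis in $\overline{{\rm X}'}$ lifts to an $(\F\cup\{0\},\star)$-axis of $\A_{\theta}$, obtain that lift by applying Miyamoto automorphisms of $\A_{\theta}$ to the lifted axes of ${\rm X}'$ (your intertwining $P\circ\phi_{\tilde c}=\phi_c\circ P$ is exactly the content of the paper's explicit eigenspace computation), and then read off conditions~\eqref{axcond1} and~\eqref{axcond2} from axiality via Proposition~\ref{prop:ss}. The only cosmetic differences are that the paper pins down your undetermined $v_a$ as $\theta(a,a)$ by idempotency of the lifted axis (which, as you note, is not needed for the cocycle conditions), and that it allows $0$ to be placed in either graded component (handled by a sign $\sigma(0)$) rather than forcing it into the trivial one.
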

	\begin{proof}
		As the generation of axial algebras is stable~\cite[Theorem 3.12]{axial}, it suffices to see that every element $c$ in the closure $\overline{{\rm X}'}$ satisfies that $c+\theta(c,c)$ is an $(\F\cup\{0\},\star)$-axis for $\A_{\theta}$. Let $a,b\in {\rm X}'$, and set $c=\phi_b(a)\in\overline{{\rm X}'}$. Note that $(\F\cup\{0\},\star)$ admits the same $\zd$-grading as $(\F,\star)$ (if $0\notin \F$, it suffices with including $0$ in any of the graded components), so we can construct its Miyamoto group. By hypothesis, $a+\theta(a,a)$ and $b+\theta(b,b)$ are $(\mathcal{F}\cup\{0\},\star)$-axes of  $\A_{\theta}$, and so is $\phi_{b+\theta(b,b)}(a+\theta(a,a))$ by~\cite[Section 3.2]{axial}. We claim that $\phi_{b+\theta(b,b)}(a+\theta(a,a))=c+\theta(c,c)$. Indeed, 
		consider the eigenspace decompositions of $a$ in $\A$ and $\A_{\theta}$ regarding $b$ and $b+\theta(b,b)$, respectively: \begin{align} \label{eq:stab}
			a=&\sum_{\lambda\in \F_{1},\l\neq 0}a_{\l}^b + \sum_{\lambda\in \F_{-1},\l\neq 0}a_{\l}^b + a_0^b \notag \\ =&\sum_{\lambda\in \F_{1},\l\neq 0}\left(a_{\l}^b+\l^{-1}\theta\left(b,a^b_{\l}\right)\right)+\sum_{\l\in \F_{-1},\l\neq 0}\left(a_{\l}^b+\l^{-1}\theta\left(b,a^b_{\l}\right)\right)+a_0^{b+\theta(b,b)},
		\end{align} where $a^b_{\l}\in \A^b_{\l}$ and $a_0^{b+\theta(b,b)}\in \left(\A_{\theta}\right)^{b+\theta(b,b)}_0$. Also, note that $\theta(a,a)\in \left(\A_{\theta}\right)^{b+\theta(b,b)}_0$, and define $\sigma(0)=1$ if $0\in(\F\cup\{0\})_{1}$ and $\sigma(0)=-1$ if $0\in(\F\cup\{0\})_{-1}$. Then, 
		\begin{align*}
			\phi_{b+\theta(b,b)}(a+\theta(a,a))=&\sum_{\lambda\in \F_{1},\l\neq 0}\left(a_{\l}^b+\l^{-1}\theta\left(b,a^b_{\l}\right)\right)-\sum_{\l\in \F_{-1},\l\neq 0}\left(a_{\l}^b+\l^{-1}\theta\left(b,a^b_{\l}\right)\right) \\ &+\sigma(0) \left(a_0^{b+\theta(b,b)} + \theta(a,a)\right) \\ =& c + \sum_{\lambda\in \F_{1},\l\neq 0}\l^{-1}\theta\left(b,a^b_{\l}\right)-\sum_{\l\in \F_{-1},\l\neq 0}\l^{-1}\theta\left(b,a^b_{\l}\right) \\ & + \sigma(0) \left(a_0^{b+\theta(b,b)} + \theta(a,a)- a_0^b\right) \\ \overset{\eqref{eq:stab}}{=}&  c + \sum_{\lambda\in \F_{1},\l\neq 0}\l^{-1}\theta\left(b,a^b_{\l}\right)-\sum_{\l\in \F_{-1},\l\neq 0}\l^{-1}\theta\left(b,a^b_{\l}\right) \\ & + \sigma(0) \left( \theta(a,a)- \sum_{\lambda\in \F_{1},\l\neq 0}\l^{-1}\theta\left(b,a^b_{\l}\right)-\sum_{\l\in \F_{-1},\l\neq 0}\l^{-1}\theta\left(b,a^b_{\l}\right)\right) \\ \eqqcolon & c + v,
		\end{align*}
		where $v\in V$.
		As $c+v$ is an idempotent element of $\A_{\theta}$, it follows
		\[c+v=[c+v,c+v]_{\theta}=c+\theta(c,c),\]
		and the claim is proved. 
	\end{proof}
	
	Recall that the isomorphy class of a central extension $\A_{\theta}$ does not depend on the particular choice of the point in the orbit of $[\theta]$ under the natural action of $\Aut(A)$. Then, it is apparent that if $\theta\in \Za(\A,{\rm V};{\rm X})$, it follows that $\phi\theta\in \Za(\A,{\rm V};\phi^{-1}({\rm X}))$.
	But if further $\phi\in G({\rm X})$ and the conditions of Proposition~\ref{prop:stab} are satisfied, then we also have the following stronger result.
	
	\begin{cor}
		Let $(\A,{\rm X})$ be an axial algebra over a field of characteristic not $2$, ${\rm V}$ a vector space and $\theta\colon \A\times \A\to {\rm V}$ a symmetric bilinear map such that $\{[\theta_{\g}]\}_{\g\in\Gamma}$ are linearly independent. Furthermore, assume that $(\F,\star)$ admits a $\zd$-grading. Then, there exists a natural action of $G({\rm X})$ on $\Za(\A,{\rm V};{\rm X})$.
	\end{cor}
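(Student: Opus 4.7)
The plan is to define the candidate action on $\Za(\A,{\rm V};{\rm X})$ by restricting the natural right action of $\Aut(\A)$ on bilinear maps, namely $(\phi\cdot\theta)(x,y)=\theta(\phi(x),\phi(y))$. Three things must then be verified: (i) $\phi\theta$ is a symmetric bilinear map whose coordinate cocycles $\{[(\phi\theta)_\g]\}_{\g\in\Gamma}$ remain linearly independent; (ii) $\phi\theta$ again satisfies the cocycle axioms \eqref{axcond1} and \eqref{axcond2} relative to ${\rm X}$; and (iii) the axioms of a right group action hold. Items (i) and (iii) are essentially formal: symmetry is immediate, and $[\phi\theta]=0$ forces $\phi\theta=\delta f$ for some $f$, whence $\theta=\delta(f\circ\phi^{-1})$ and $[\theta]=0$, so the independence of $\{[\theta_\g]\}$ transfers to $\{[(\phi\theta)_\g]\}$; while $(\id\cdot\theta)(x,y)=\theta(x,y)$ and $((\phi\psi)\cdot\theta)(x,y)=\theta(\phi\psi(x),\phi\psi(y))=(\psi\cdot(\phi\cdot\theta))(x,y)$.

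The substantive content is (ii). Here the plan is to combine the remark preceding the statement, which gives $\phi\theta\in\Za(\A,{\rm V};\phi^{-1}({\rm X}))$ for any $\phi\in\Aut(\A)$, with the stability result of Proposition~\ref{prop:stab}, which asserts that membership in $\Za(\A,{\rm V};\cdot)$ depends only on the closure of the chosen generating set. Thus it suffices to establish
\[
\overline{\phi^{-1}({\rm X})}=\overline{{\rm X}}\qquad\text{for every }\phi\in G({\rm X}).
\]
I would prove this via the standard conjugation identity for Miyamoto automorphisms: for any $\phi\in\Aut(\A)$ and any axis $a$, one has $\phi_{\phi(a)}=\phi\,\phi_a\,\phi^{-1}$ (this is immediate from the $\zd$-grading of the eigenspace decomposition). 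Applied to the generators of $G(\phi^{-1}({\rm X}))$, this yields $G(\phi^{-1}({\rm X}))=\phi^{-1}G({\rm X})\phi$, hence
\[
\overline{\phi^{-1}({\rm X})}=G(\phi^{-1}({\rm X}))\cdot\phi^{-1}({\rm X})=\phi^{-1}\bigl(G({\rm X})\cdot{\rm X}\bigr)=\phi^{-1}\bigl(\overline{{\rm X}}\bigr).
\]
Since $\phi\in G({\rm X})$ and the closure $\overline{{\rm X}}$ is by definition $G({\rm X})$-invariant, we conclude $\phi^{-1}(\overline{{\rm X}})=\overline{{\rm X}}$, as required.

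The main obstacle is precisely the equality of closures above, because it is the only step that genuinely uses the hypothesis $\phi\in G({\rm X})$ (rather than arbitrary $\phi\in\Aut(\A)$, which would only give the weaker containment $\phi\theta\in\Za(\A,{\rm V};\phi^{-1}({\rm X}))$). Once this is in place, the chain
\[
\theta\in\Za(\A,{\rm V};{\rm X})\;\Longrightarrow\;\phi\theta\in\Za(\A,{\rm V};\phi^{-1}({\rm X}))\;\overset{\ref{prop:stab}}{\Longrightarrow}\;\phi\theta\in\Za(\A,{\rm V};{\rm X})
\]
closes the argument, and the group-action axioms verified above then promote this to a bona fide right action of $G({\rm X})$ on $\Za(\A,{\rm V};{\rm X})$.
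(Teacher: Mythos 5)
Your proposal is correct. It follows the blueprint the paper itself announces in the paragraph immediately before the corollary --- combine the $\Aut(\A)$-equivariance remark $\phi\theta\in\Za(\A,{\rm V};\phi^{-1}({\rm X}))$ with Proposition~\ref{prop:stab} --- but your execution of the key step is genuinely different from the paper's. The paper never manipulates the set $\phi^{-1}({\rm X})$: it fixes $a\in{\rm X}$ and re-verifies conditions~\eqref{axcond1} and~\eqref{axcond2} for $\phi\theta$ at $a$ by direct computation, using $\A_{\nu}^{\phi(a)}=\phi(\A_{\nu}^{a})$ together with the fact (extracted from the proof of Proposition~\ref{prop:stab}) that $\theta$ remains a cocycle at $\phi(a)\in\overline{{\rm X}}$. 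You instead work at the level of generating sets, reducing everything to the identity $\overline{\phi^{-1}({\rm X})}=\overline{{\rm X}}$, which you prove via the conjugation formula $\phi_{\phi(a)}=\phi\,\phi_a\,\phi^{-1}$ and the one-step closure description $\overline{{\rm X}}=G({\rm X})\cdot{\rm X}$. Your route isolates more cleanly where the hypothesis $\phi\in G({\rm X})$ (rather than $\phi\in\Aut(\A)$) enters, at the cost of importing two standard facts from~\cite{axial} that the paper's proof does not need. Two formalities deserve attention, though neither is a gap: Proposition~\ref{prop:stab} is stated only for subsets ${\rm X}',{\rm X}''$ of the fixed generating set, and $\phi^{-1}({\rm X})\not\subseteq{\rm X}$ in general, so you should either apply it inside the axial algebra $(\A,{\rm X}\cup\phi^{-1}({\rm X}))$ or observe that its proof actually establishes cocycle-ness relative to every element of the closure; and the linear-independence transfer in your item (i) should be run on arbitrary linear combinations $\sum_{\g}c_{\g}(\phi\theta)_{\g}=\phi\bigl(\sum_{\g}c_{\g}\theta_{\g}\bigr)$ rather than on the single class $[\phi\theta]$.
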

	
	\begin{proof}
		Let $\theta\in \Za(\A,{\rm V};{\rm X})$ and $\phi\in G({\rm X})$; we are going to prove that $\phi\theta\in \Za(\A,{\rm V};{\rm X})$. Take $a\in {\rm X}$. If $x\in \ker L_a$, clearly $\phi(a)\phi(x)=0$. By Proposition~\ref{prop:stab},  $\phi\theta(a,x)=\theta(\phi(a),\phi(x))=0$ and condition~\eqref{axcond1} is satisfied. Now take $x\in \A^a_{\l}$ and $y\in \A^a_{\mu}$ for some $\l,\mu\in\F$ such that $0\notin\l\star\mu$. A straightforward computation shows that $\A_{\nu}^{\phi(a)}=\phi(\A_{\nu}^a)$ for all $\nu \in \F$, so $\phi(x)\in \A^{\phi(a)}_{\l}$ and $\phi(y)\in \A^{\phi(a)}_{\mu}$. Then Proposition~\ref{prop:stab} ensures that \[\phi\theta(x,y)=\theta(\phi(x),\phi(y))=\sum_{\nu\in\l\star\mu}\nu^{-1}\theta(\phi(a),z_{\nu}^{\phi}),\] for some $z_{\nu}^{\phi}\in\A^{\phi(a)}_{\nu}$ such that $\phi(x)\phi(y)=\sum_{\nu\in\l\star \mu}z_{\nu}^{\phi}$. Noting that \[\phi(x)\phi(y)=\phi(xy)=\sum_{\nu\in\l\star \mu}\phi(z_{\nu}),\] where the $z_{\nu}\in \A_{\nu}^a$ stand for their usual meaning, we see that $z_{\nu}^{\phi}=\phi(z_{\nu})$ for all $\nu\in\l\star \mu$. Thus, condition~\eqref{axcond2} is also satisfied and $\phi\theta\in \Za(\A,{\rm V};{\rm X})$.
	\end{proof}
	
	Now, we present an important result that justifies the importance of studying central extensions of axial algebras.

	\begin{thm}\label{th:imp}
		Let  $(\Ba,{\rm Y})$ be an $(\mathcal{F},\star)$-axial algebra with non-zero annihilator. Then, there exists another $(\mathcal{F},\star)$-axial algebra $(\A,{\rm X})$ and a cocycle $\theta\in\Za(\A,\Ann(\Ba);{\rm X})$ such that $\Ba= \A_{\theta}$. Also, if ${\rm Y}$ is a minimal generating set of axes for $\Ba$, ${\rm X}$ is a minimal generating set of axes for $\A$.
	\end{thm}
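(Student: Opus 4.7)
The plan is to take $\A\coloneqq \Ba/\Ann(\Ba)$ and exhibit $\Ba$ as a central extension of $\A$ via the cocycle induced by a linear splitting of the canonical projection $\pi\colon\Ba\to\A$. Setting $V\coloneqq \Ann(\Ba)$, for each $a\in {\rm Y}$ the relation $aV=0$ gives $V\subseteq \Ba_0^a$, so the eigenspace decomposition of $L_a$ descends to $L_{\bar a}$ on $\A$ with the same non-zero eigenspaces and $0$-eigenspace $\Ba_0^a/V$; this preserves idempotency, semisimplicity (also in infinite dimension, as $V$ is $L_a$-stable), spectrum within $\F$, and the fusion rule. Hence each $\bar a=\pi(a)$ is an $(\F,\star)$-axis of $\A$, and ${\rm X}\coloneqq \pi({\rm Y})$ is a finite set of axes generating $\A$, making $(\A,{\rm X})$ an $(\F,\star)$-axial algebra.

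Next, I would fix any linear section $s\colon \A\to \Ba$ of $\pi$ and define the symmetric bilinear map $\theta(x,y)\coloneqq s(x)s(y)-s(xy)\in V$. The map $\Phi\colon \Ba\to \A_\theta$, $b\mapsto \pi(b)+(b-s(\pi(b)))$, is a linear bijection; it is multiplicative because $s(\pi(b))-b\in V$ kills all cross terms, giving $s(\pi(b_1))s(\pi(b_2))=b_1b_2$, so $\Ba\cong \A_\theta$. The same annihilator argument yields $s(\bar a)^2=a$ for $a\in{\rm Y}$, hence $\theta(\bar a,\bar a)=a-s(\bar a)$ and $\Phi(a)=\bar a+\theta(\bar a,\bar a)$. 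Therefore, under $\Phi$, the set ${\rm Y}$ corresponds exactly to the set ${\rm Y}'=\{\bar a+\theta(\bar a,\bar a)\mid a\in{\rm Y}\}$ from Theorem~\ref{th:gen}, with $P({\rm Y}')={\rm X}$.

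To apply Theorem~\ref{th:gen} I would also need that $\{[\theta_\gamma]\}_{\gamma\in\Gamma}$ be linearly independent; this follows from $\Ba$ being non-split, for if $\Ba=I\oplus J$ with $0\neq J\subseteq V$ then every idempotent (and hence every axis) of $\Ba$ would lie in $I$, contradicting $\langle {\rm Y}\rangle=\Ba$. Since $(\A_\theta,{\rm Y}')\cong (\Ba,{\rm Y})$ is $(\F,\star)$-axial, the ``if and only if'' clause in Theorem~\ref{th:gen} then forces $\theta\in \Za(\A,V;{\rm X})$.

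For the minimality claim I would argue by contradiction: suppose ${\rm Y}$ is minimal but ${\rm X}$ is not, and pick a minimal generating subset ${\rm X}''\subsetneq {\rm X}$ of $\A$. Since any minimal generating set of axes is linearly independent (a dependence would let us drop an axis), I may prescribe a new section $s''$ with $s''(\bar a)=a$ for every $\bar a\in {\rm X}''$ and extend it freely on a complement. The corresponding cocycle $\theta''$ satisfies the same linear independence hypothesis, so Lemma~\ref{l:gen} applied to $(\A,{\rm X}'')$ and $\theta''$ shows that ${\rm X}''$ generates $\A_{\theta''}$; transporting through $(\Phi'')^{-1}$, the normalisation $s''(\bar a)=a$ identifies ${\rm X}''$ precisely with ${\rm Y}''\coloneqq \{a\in {\rm Y}\mid \bar a\in {\rm X}''\}\subsetneq {\rm Y}$, so ${\rm Y}''$ generates $\Ba$, contradicting the minimality of ${\rm Y}$. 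The main delicate step is this last alignment: without the linear independence of ${\rm X}''$ one could not match each $\bar a\in{\rm X}''$ to the original axis $a\in{\rm Y}$ via a section, and Lemma~\ref{l:gen} would only produce a generating subset of $\Ba$ a priori outside ${\rm Y}$.
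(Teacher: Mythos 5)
Your proposal is correct and follows essentially the same route as the paper's proof: the paper works with a linear complement of $\Ann(\Ba)$ equipped with the projected product (which is exactly your quotient $\Ba/\Ann(\Ba)$ together with a choice of section), uses the same induced cocycle $\theta(x,y)=s(x)s(y)-s(xy)$, verifies the descent of the axial structure axis by axis in the same way, deduces $\theta\in\Za(\A,\Ann(\Ba);{\rm X})$ directly from Proposition~\ref{prop:ss} (rather than via Theorem~\ref{th:gen}), and obtains minimality through Lemma~\ref{l:gen}. Your two extra verifications --- that non-splitness of $\Ba$ yields the linear independence of $\{[\theta_{\gamma}]\}$ required by Lemma~\ref{l:gen}, and the realignment of the section so that the generating subset produced by Lemma~\ref{l:gen} is literally a proper subset of ${\rm Y}$ --- address points the paper's proof leaves implicit, so they are refinements rather than detours.
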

	
	\begin{proof}
		Take a linear complement $\A$ of $\Ann(\Ba)$ and set $P_{\A}\colon \Ba\to \A$ defined by $P_{\A}(x+v)=x$, with $x\in\A$ and $v\in \Ann(\Ba)$. We endow $\A$ with the product $xy=P_{\A}([x,y])$, where $[,]$ denotes the product in $\Ba$, in order to give it structure of algebra. Note that, with this structure, $P_{\A}$ is a homomorphism of algebras: indeed, for all $x+v,y+w\in \Ba$,
		\[P_{\A}([x+v,y+w])=P_{\A}([P_{\A}(x+v),P_{\A}(y+w)])=P_{\A}(x+v)P_{\A}(y+w).\]
		
		Set ${\rm X}=P_{\A}({\rm Y})$. Since ${\rm Y}$ is a generating set for $\Ba$, ${\rm X}$ generates $P_{\A}(\Ba)=\A$. Take $a\in {\rm X}$ such that $a=P_{\A}(b)$ for a certain $b\in {\rm Y}$.  Then \[aa=P_{\A}(b)P_{\A}(b)=P_{\A}([b,b])=P_{\A}(b)=a,\] and $a$ is idempotent. 
		
		Write $L^{\Ba}_b$ for the left multiplication by $b$ operator in $\Ba$, and $\Spec_{\Ba}(b)$ for its spectrum. We reserve the notations $L_a$ and $\Spec(a)$ for their correspondences in $\A$. It is clear that $0\in\Spec_{\Ba}(b)$ and $\Ann(\Ba)\subseteq \Ba_0^b$. Choose a basis $\{z_{\beta}^b\}_{\b \in\mathcal{B}}$ of $\Ann(\Ba)$ and complete it to a basis $\{z_{\b}^b\}_{\b \in\mathcal{B}'}$ of $\Ba$ formed by eigenvectors of $L^{\Ba}_b$, with $[b,z_{\b}^b]=\lambda_{\b}z_{\b}^b$. The elements $\{P(z_{\a}^b)\}_{\a \in\mathcal{B}'\setminus\mathcal{B}}$ form a basis for $\A$, and are in fact eigenvectors of $ L_a$ with respect to $\lambda_{\b}$.
		Note that $\Spec(a)=\Spec_{\Ba}(b)$ if and only if $ \Ba_0^b \neq \Ann (\Ba)$; otherwise, $\Spec(a)=\Spec_{\Ba}(b)\setminus\{0\}$.
		
		The above explanations show that, for every $\lambda\in\Spec(a)$, $\A_{\lambda}^a=P_{\A}(\Ba_{\lambda}^b)$, and therefore
		\begin{align*}
			\A_{\lambda}^a\A_{\mu}^a=P_{\A}(\Ba_{\lambda}^b)P_{\A}(\Ba_{\mu}^b)=P_{\A}[\Ba_{\lambda}^b,\Ba_{\mu}^b]\subseteq P_{\A}(\Ba_{\lambda\star\mu}^b)=\A_{\lambda\star\mu}^a,
		\end{align*}
		for all $\lambda,\mu\in\Spec(a)$, and assuming that $\A_0^a=\{0\}$ if $0\notin\Spec(a)$, as always.
		
		Summing up, we have proved that $(\A,{\rm X})$ is an $(\mathcal{F},\star)$-axial algebra.
		
		Now, define $\theta\colon\A\times\A\to\Ann(\Ba)$ by $\theta(x,y)=[x,y]-xy$, and construct $\A_{\theta}$ in the usual way. For $x+v, y+w\in\A_{\theta}$, we have that
		\[[x+v,y+w]_{\theta}=xy+\theta(x,y)=[x,y],\]
		so $\A_{\theta}=B$, and $(\A_{\theta},{\rm Y})$ is $(\mathcal{F},\star)$-axial. Proposition~\ref{prop:ss} yields that $\theta\in\Za(\A,\Ann(B);{\rm X})$.
		
		Finally, assume that ${\rm Y}$ is a minimal generating set of axes for $\Ba$. Take a minimal generating set of axes for $\A$, ${\rm X}'\subseteq {\rm X}$. Then, ${\rm X}'$ would also generate $\A_{\theta}=\Ba$ by Lemma~\ref{l:gen}. It follows that also ${\rm Y}'=\{b\in {\rm Y}\mid P(b)\in {\rm X}'\}\subset {\rm Y}$ would generate $\Ba$, a contradiction.
	\end{proof}
	
	\begin{rem}
		Note that, in the conditions of Theorem~\ref{th:imp}, in some cases we can find another fusion law $(\mathcal{G},\odot)\subseteq(\mathcal{F},\star)$ such that $(\A,{\rm X})$ is $(\mathcal{G},\odot)$-axial. Namely, if $\Ba_0^b = \Ann(\Ba)$ for all $b\in {\rm Y}$, we can set $\mathcal{G}=\mathcal{F}\setminus \{0\}$ and $\lambda\odot \mu=\lambda\star\mu\setminus\{0\}$ for all $\lambda,\mu\in\mathcal{G}$. On the contrary, assume that there exists $b\in B$ such that $\Ba_0^b \neq \Ann(\Ba)$. Then, we must set $\mathcal{G}=\mathcal{F}$ and, given $\lambda,\mu\in\Spec_{\Ba}(b)=\Spec(a)$, we can set $\lambda\odot\mu=\lambda\star\mu\setminus\{0\}$ if and only if $\Pi^b_0([\Ba_{\lambda}^b,\Ba_{\mu}])\subseteq \Ann(\Ba)$, where $\Pi_0^b\colon \Ba\to \Ba_0^b$ is the natural projection; otherwise, we must set $\lambda\odot\mu=\lambda\star\mu$.
	\end{rem}
	
	The following corollary is a direct consequence of Theorem~\ref{th:imp}.
	
	\begin{cor}
		Let $\A$ be an algebra, ${\rm V}$ a vector space and $\theta\colon \A\times\A\to {\rm V}$ a bilinear form. If $(\A_{\theta},{\rm Y})$ is axial with respect to some fusion law $(\mathcal{F},\star)$, then $(\A,P({\rm Y}))$ is also $(\mathcal{F},\star)$-axial.
	\end{cor}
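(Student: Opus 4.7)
The plan is to exploit the fact that the natural projection $P\colon \A_{\theta}\to \A$ is a surjective algebra homomorphism whose kernel is (by construction) contained in $\Ann(\A_\theta)$, and to push the axis structure of each $b\in {\rm Y}$ down to $P(b)\in\A$ using the structural lemmas already established in the section. Since $(\A_\theta,{\rm Y})$ axial forces $\A_\theta$ to be commutative, $\theta$ is automatically symmetric, so Lemmas~\ref{lm:crjone}, \ref{l:eig} and \ref{lm:crjtwo} all apply. Rather than appealing to Theorem~\ref{th:imp} directly (which would replace $\A$ by a complement of the full annihilator, a space we do not control), I would run the analogue of its proof for the specific quotient $\A\cong \A_\theta/{\rm V}$.

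Concretely, I would fix $b\in {\rm Y}$ and write $b=P(b)+v$ with $v\in {\rm V}$. Applying $P$ to the idempotency relation $[b,b]_\theta=b$ immediately gives $P(b)^2=P(b)$. Since $L^{\theta}_{b}$ is diagonalisable in $\A_\theta$, Lemma~\ref{lm:crjtwo} (applied with $x=P(b)$) yields that $L_{P(b)}$ is diagonalisable on $\A$ and, crucially, that $\ker L_{P(b)}\subseteq \theta_{P(b)}^{\perp}$. Lemma~\ref{l:eig} then gives $\Spec_\theta(b)=\Spec(P(b))\cup\{0\}$, and since $\Spec_\theta(b)\subseteq \mathcal{F}$ by hypothesis, we obtain $\Spec(P(b))\subseteq \mathcal{F}$. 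For the fusion law, I would take $\lambda,\mu\in\Spec(P(b))$ and $y_1\in \A_{\lambda}^{P(b)}$, $y_2\in \A_{\mu}^{P(b)}$, use Lemma~\ref{l:eig} to lift them to eigenvectors $b_1\in (\A_\theta)_{\lambda}^{b}$ and $b_2\in (\A_\theta)_{\mu}^{b}$, and project the relation $[b_1,b_2]_\theta\in \sum_{\nu\in\lambda\star\mu}(\A_\theta)_{\nu}^{b}$ via $P$. The explicit descriptions of Lemma~\ref{l:eig} give $P((\A_\theta)_{\nu}^{b})=\A_{\nu}^{P(b)}$ for $\nu\ne 0$; combined with $\ker L_{P(b)}\subseteq \theta_{P(b)}^{\perp}$, they also give $P((\A_\theta)_{0}^{b})=\A_{0}^{P(b)}$. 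Hence $y_1y_2\in \A_{\lambda\star \mu}^{P(b)}$ as required. Generation is automatic: $P({\rm Y})$ generates $\A=P(\A_\theta)$ because ${\rm Y}$ generates $\A_\theta$ and $P$ is a surjective homomorphism of algebras.

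The only step that needs genuine care is the treatment of the $0$-eigenspace: a priori, an element $y_1\in\A_{0}^{P(b)}$ need not admit a lift into $(\A_\theta)_{0}^{b}$, because the latter is described in Lemma~\ref{l:eig} as $\{y+w\mid y\in \A_0^{P(b)}\cap \theta_{P(b)}^{\perp}\}$, and projecting a product of eigenvectors in $\A_\theta$ could in principle escape $\A_{\lambda\star\mu}^{P(b)}$ through a stray $0$-component. This is exactly where the containment $\ker L_{P(b)}\subseteq \theta_{P(b)}^{\perp}$ provided by Lemma~\ref{lm:crjtwo} does the work: it guarantees the equality $\A_0^{P(b)}\cap \theta_{P(b)}^{\perp}=\A_0^{P(b)}$, so every $0$-eigenvector in $\A$ does lift, and the fusion inclusion descends cleanly through $P$. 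Once this point is settled, the whole statement is a bookkeeping exercise with the lemmas already in hand.
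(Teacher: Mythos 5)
Your proof is correct, and it is worth noting that it does \emph{not} follow the paper's route: the paper disposes of this corollary in one line, declaring it a direct consequence of Theorem~\ref{th:imp}. Your hesitation about that shortcut is well founded, since Theorem~\ref{th:imp} produces an axial structure on a complement of the \emph{full} annihilator of $\Ba=\A_{\theta}$, whereas the corollary concerns the given $\A$, a complement of ${\rm V}$, which may be a proper subspace of $\Ann(\A_{\theta})$; strictly speaking one must observe that the proof of Theorem~\ref{th:imp} only ever uses the containment of the chosen subspace in the annihilator and in each $\Ba_0^b$, so it transfers verbatim to ${\rm V}$. Your alternative is to run the descent directly through Lemmas~\ref{lm:crjtwo} and~\ref{l:eig}: idempotency and generation pass through the surjective homomorphism $P$, Lemma~\ref{lm:crjtwo} gives diagonalisability of $L_{P(b)}$ together with $\ker L_{P(b)}\subseteq\theta_{P(b)}^{\perp}$, Lemma~\ref{l:eig} pins down the spectra and eigenspaces, and the fusion inclusion is obtained by lifting eigenvectors, multiplying in $\A_{\theta}$ and projecting back. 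This is the same circle of ideas as the proof of Theorem~\ref{th:imp} (projection of eigenvectors and of fusion relations), but packaged so that the dependence on ${\rm V}$ rather than on $\Ann(\A_{\theta})$ is explicit; your identification of the only delicate point --- that $\A_0^{P(b)}\subseteq\theta_{P(b)}^{\perp}$ is exactly what lets $0$-eigenvectors lift into $(\A_{\theta})_0^{b}$ and keeps the projected product inside $\A_{\lambda\star\mu}^{P(b)}$ --- is accurate, and in fact only the inclusions $P\bigl((\A_{\theta})_{\nu}^{b}\bigr)\subseteq\A_{\nu}^{P(b)}$ are needed to close the argument. The net effect is a self-contained proof that is slightly more careful than the paper's citation, at the cost of repeating computations that Theorem~\ref{th:imp} already encapsulates.
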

	
	We finish this section relating some properties of an axial algebra $(\A,{\rm X})$ with those of its central extensions. First, we provide an easy lemma whose proof is left to the reader.
	
	\begin{lm}\label{lm:frob}
		Let $(\A,{\rm X})$ be an axial algebra admitting a Frobenius form $(\cdot,\cdot)$. Then, $\Ann(\A)$ is contained in the radical of $(\cdot,\cdot)$.
	\end{lm}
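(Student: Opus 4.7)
The plan is to take an arbitrary $z\in\Ann(\A)$ and show that $(z,x)=0$ for every $x\in\A$, which is exactly the assertion that $z$ lies in the radical of the form. The key observation is that the Frobenius property lets us always shift multiplication by $z$ off a product and onto an element that annihilates $z$.

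More precisely, I would first observe that for any $u,v\in\A$ one has
\[(z,uv)=(zu,v)=(0,v)=0,\]
so $(z,\cdot)$ vanishes on the image of the multiplication $\A\cdot\A$. It then remains to handle an arbitrary spanning set of $\A$. Since $(\A,{\rm X})$ is axial, $\A$ is generated as an algebra by ${\rm X}$, so every element of $\A$ is a linear combination of products (with some bracketing) of axes in ${\rm X}$. A product involving at least two axes obviously lies in $\A\cdot\A$, and for a lone axis $a\in{\rm X}$ idempotency gives $a=a\cdot a\in\A\cdot\A$. Hence every element of $\A$ is a linear combination of elements of $\A\cdot\A$, and by the previous computation $(z,x)=0$ for all $x\in\A$ by linearity.

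There is no genuine obstacle here; the only thing to be slightly careful about is handling the generators ${\rm X}$ themselves, which is dispatched by the idempotency condition built into the definition of an axis. The argument does not use the fusion law, the semisimplicity of $L_a$, or symmetry of $(\cdot,\cdot)$ anywhere, so the statement holds in the full generality given.
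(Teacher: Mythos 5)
Your argument is correct and is exactly the intended one: the paper leaves this proof to the reader, and your two ingredients---associativity of the form pushing $(z,uv)=(zu,v)=0$ for $z\in\Ann(\A)$, plus the fact that an axial algebra is perfect because its idempotent generators satisfy $a=a^2$ (a fact the paper itself records in Remark~\ref{r:easy})---are precisely what is needed. Nothing further to add.
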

	
	\begin{prop}\label{p:prop}
		Let $(\A,{\rm X})$ be an $(\F,\star)$-axial algebra admitting an $(\F\cup\{0\},\odot)$-axial central extension $(\A_{\theta},{\rm Y})$, where ${\rm Y}=\{a+\theta(a,a)\mid a \in {\rm X}\}$. Then:
		\begin{enumerate}
			\item\label{prim} $(\A,{\rm X})$ is primitive if and only if $(\A_{\theta},{\rm Y})$ is primitive.
			\item $(\A,{\rm X})$ admits a Frobenius form if and only if $(\A_{\theta},{\rm Y})$ admits a Frobenius form.
			\item The radical of $(\A_{\theta}, {\rm Y})$ is $R(\A_{\theta},{\rm Y})=R(\A,{\rm X})\dot{+} {\rm V}$. Conversely, the radical of $(\A,{\rm X})$ is $R(\A,{\rm X})=P(R(\A_{\theta},{\rm Y}))$. 
			\item If $(\A,{\rm X})$ is $m$-closed for a certain $m\in\N$, then $(\A_{\theta},{\rm Y})$ is at most $(m+1)$-closed.
			\item\label{flip} If $(\A,{\rm X})$ is symmetric with flip $\tau$ and there exists an automorphism $\varphi\in\Aut({\rm V})$ such that $\varphi(\theta(x,y))=\theta (\tau(x),\tau(y))$ for all $x,y\in \A$, then $(\A_{\theta},{\rm Y})$ is symmetric with flip $\tau_{\theta}$ defined by $\tau_{\theta}(x+v)=\tau(x)+\varphi (v)$. 
			\item \label{miy} Assume that the characteristic of the ground field is different from $2$ and that both $(\F,\star)$ and $(\F\cup\{0\},\odot)$ admit $\zd$-gradings such that the one of $(\F\cup\{0\},\odot)$ contains that of $(\F,\star)$. Then, the axial subgroups $T_a=\{\id_{\A},\phi_a\}\subseteq \Aut(\A)$ and $T_{a+\theta(a,a)}=\{\id_{\A_{\theta}},\phi_{a+\theta(a,a)}\}\subseteq \Aut (\A_{\theta})$ are isomorphic for all $a\in {\rm X}$. 
			Moreover, if the assignment $\phi_a \mapsto \phi_{a+\theta(a,a)}$ gives rise to a homomorphism between the Miyamoto groups $G({\rm X})$ and $G({\rm Y})$, then it is bijective and $G({\rm X})\simeq G({\rm Y})$.
		\end{enumerate}
	\end{prop}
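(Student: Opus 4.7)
The plan is to handle the six parts independently, leveraging the decomposition $\A_\theta = \A\dot{+}{\rm V}$ and the eigenspace description from Lemma~\ref{l:eig}. For parts~(1), (2), and~(5), the verifications will be direct: primitivity follows from the dimension-preserving bijection $y \mapsto y + \theta(a,y)$ between $\A_1^a$ and $(\A_\theta)_1^{a+\theta(a,a)}$ supplied by Lemma~\ref{l:eig}; the Frobenius form on $\A$ will be lifted as $(x+v,y+w)_\theta := (x,y)$ with associativity checked by direct computation, and the converse handled via Lemma~\ref{lm:frob}, which places ${\rm V}$ in the radical of any Frobenius form on $\A_\theta$ and so lets the form descend to $\A_\theta/{\rm V} \cong \A$; the flip will be constructed as $\tau_\theta(x+v) := \tau(x) + \varphi(v)$, and the compatibility $\varphi \circ \theta = \theta \circ (\tau \times \tau)$ is precisely what is needed to check that $\tau_\theta$ is an automorphism that swaps the generating axes of ${\rm Y}$.

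For~(3), I would show that $R(\A,{\rm X})\dot{+}{\rm V}$ is an ideal of $\A_\theta$ avoiding ${\rm Y}$: if some axis $a+\theta(a,a)$ lay in it, applying $P$ would give $a \in R(\A,{\rm X})$, contradicting $a \in {\rm X}$. For maximality I would take any ideal $J \subseteq \A_\theta$ disjoint from ${\rm Y}$ and show that $P(J)$ avoids ${\rm X}$, because $a+v \in J$ with $a \in {\rm X}$ yields $[a+v,a+v]_\theta = a+\theta(a,a) \in J \cap {\rm Y}$, a contradiction; hence $P(J) \subseteq R(\A,{\rm X})$ and $J \subseteq R(\A,{\rm X})\dot{+}{\rm V}$. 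The formula $R(\A,{\rm X}) = P(R(\A_\theta,{\rm Y}))$ is then immediate from $P({\rm V}) = 0$.

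Parts~(4) and~(6) will require more care. For~(4), I would proceed by induction on the length of a product of ${\rm Y}$, using the identity $[\tilde p,\tilde q]_\theta = pq + \theta(p,q)$ relating lifts of products of ${\rm X}$ to those of ${\rm Y}$: the $\A$-parts reduce to length $\leq m$ by hypothesis, and the residual contributions in ${\rm V}$ should be absorbed by products of ${\rm Y}$ of length $m+1$. I expect the main obstacle to be the bookkeeping of those ${\rm V}$-contributions, since bilinearity of $\theta$ must be deployed carefully to avoid needing length greater than $m+1$. For~(6), both $T_a$ and $T_{a+\theta(a,a)}$ are cyclic of order~$2$ under the characteristic hypothesis, so they are trivially isomorphic. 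For the Miyamoto group statement, surjectivity of the assumed homomorphism is immediate from generation, and for injectivity I would construct an inverse by descent: each $\phi_{a+\theta(a,a)}$ preserves ${\rm V}$ (which sits inside the $0$-eigenspace of $a+\theta(a,a)$ by Lemma~\ref{l:eig}), so the whole subgroup $G({\rm Y})$ acts on $\A_\theta/{\rm V} \cong \A$, and a direct eigenspace computation identifies this descent with the map sending $\phi_{a+\theta(a,a)}$ to $\phi_a$, giving the required two-sided inverse.
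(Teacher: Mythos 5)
Your proposal is correct and follows essentially the same route as the paper, whose own proof of this proposition is little more than a list of pointers (Lemma~\ref{l:eig} for (1), the explicit lift together with Lemma~\ref{lm:frob} for (2), the definition of the radical for (3), the proof of Lemma~\ref{l:gen} for (4), and ``routine'' for (5) and (6)). The one bookkeeping worry you flag in (4) is resolved exactly as in the proof of Lemma~\ref{l:gen}: replacing $\theta$ by $\theta-\delta f$ for a suitable $f$ kills the ${\rm V}$-corrections at each inductive step, and since ${\rm V}\subseteq\Ann(\A_{\theta})$ those corrections never propagate into longer products.
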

	
	\begin{proof}
		\hfill
		\begin{enumerate}
			\item It follows from Lemma~\ref{l:eig}.
			\item 	Let $(\cdot,\cdot)$ be a Frobenius form for $\A$. Then,  $(\cdot,\cdot)_{\theta}\colon \A_{\theta}\times\A_{\theta}\to F$ defined by $(x+v,y+w)_{\theta}=(x,y)$ is a Frobenius form for $\A_{\theta}$.
			
			Conversely, given a Frobenius form $(\cdot,\cdot)_{\theta}$ for $\A_{\theta}$, define the bilinear form $(\cdot,\cdot)\colon \A\times\A\to F$ by $(x,y)=(x,y)_{\theta}$. By Lemma~\ref{lm:frob},
			\[(x,yz)=(x,yz)_{\theta}=(x,[y,z]_{\theta})_{\theta}=([x,y]_{\theta},z)_{\theta}=(xy,z)_{\theta}=(xy,z)\] for all $x,y\in\A$,
			so $(\cdot,\cdot)$ is a Frobenius form for $\A$.
			\item It follows from the definition of radical.
			\item It follows from the proof of Lemma~\ref{l:gen}.
			\item Routine. 
			\item Routine.
		\end{enumerate}
	\end{proof}
	
	\begin{exam}
		Proposition~\ref{p:prop} and Example~\ref{ex:ax2} allow us to obtain some basic properties of the algebras in Example~\ref{ex:eax2}. Note that the additional condition of~\ref{p:prop}\eqref{flip} only holds for $(B,\{e_1,e_2\})$ and $(I,\{e_1,e_2\})$; in both cases, it suffices to take $\varphi=\id_{\mathbb{C}}$. However, this condition is not necessary: the map on $(B_{\theta},\{e_1,a_4\})$ defined by $\tau(e_1)=a_4$, $\tau(e_2)=e_2$ and $\tau(e_3)=-e_3$ is indeed a flip.
		
		On the other hand, $(D(\a),\{e_i,a_6\})$ for $i=1,2$, and $(B,{\rm X})$ and $(I,{\rm X})$ for all choices of ${\rm X}$ satisfy the additional conditions in~\ref{p:prop}\eqref{miy}. Therefore, the Miyamoto groups of $(D(\a)_{\theta},\{e_i,a_6\})$ for $i=1,2$, $(B_{\theta},{\rm X})$ and $(I_{\theta},{\rm X})$ are isomorphic to $\zd$, $S_3$ and $\mathfrak{F}_2$, respectively.
	\end{exam}

	\section{Axial central extensions of Jordan algebras}
	\label{s:jordan}
	Alongside this section, we will assume that the characteristic of the ground fields is different from $2$.
	
	One of the most well-known features of the variety of Jordan algebras is the Peirce decomposition. This can be naturally expressed in the language of axial algebras: every idempotent of a Jordan algebra is an $\mathcal{J}(\frac{1}{2})$-axis, where $\mathcal{J}(\frac{1}{2})$ is the fusion law displayed in Table~\ref{table:j12}.

	\begin{longtable}{|c||c|c|c|}
		\hline
		$\star$ & $1$ & $0$ & $1/2$ \\
		\hline 
		\hline
		$1$ & $1$ &  &$1/2$ \\
		\hline
		$0$ &  & $0$  & $1/2$ \\
		\hline
		$1/2$ & $1/2$ & $1/2$ & $\{1,0\}$ \\
		\hline
		
		\caption{Fusion law $\mathcal{J}(\frac{1}{2})$.}
		\label{table:j12}
	\end{longtable}
	
	Then, every Jordan algebra generated by its idempotent elements is $\mathcal{J}(\frac{1}{2})$-axial. What is more, it is an open question~\cite[Conjecture 4.3]{ms23} to know if Jordan algebras generated by idempotents, together with quotients of Matsuo algebras, are the only axial algebras of Jordan type $\mathcal{J}(\frac{1}{2})$; the answer has been proved affirmative for  algebras 
	generated by two or three primitive axes~\cite{hrs2,gs}. Studying central extensions of Jordan algebras could be a way to delve into this question.

	It turns out (cf.~\cite{a46,a47}) that every finite-dimensional simple Jordan algebra over $\mathbb{C}$ is generated by idempotents, so we can apply the results of Section~\ref{s:ce} to study which of their non-split central extensions are $\mathcal{J}(\frac{1}{2})$-axial. The first aim of this section is to prove the following theorem.
	
	\begin{thm}\label{th:j}
		Let $J$ be a finite-dimensional simple Jordan algebra over $\mathbb{C}$, and let $\rm X$ be the set of all its idempotents. There do not exist  non-split central extensions $J_{\theta}$ of $J$ which are $\mathcal{J}(\frac{1}{2})$-axial with respect to the set ${\rm Y}=\{a+\theta(a,a)\mid a\in {\rm X}\}$.
	\end{thm}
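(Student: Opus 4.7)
The plan is to exploit the fact that every finite-dimensional simple Jordan algebra $J$ over $\mathbb{C}$ is unital (this is classical: all finite-dimensional simple Jordan algebras over an algebraically closed field of characteristic $0$ are unital by the Jordan--von Neumann--Wigner classification). Its unit $1$ is an idempotent, hence $1\in{\rm X}$, and $1+\theta(1,1)\in{\rm Y}$ is an axis of $(J_\theta,{\rm Y})$. The strategy is to apply the cocycle condition at the very convenient axis $a=1$ and force $\theta$ to be a coboundary.

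First I would observe that $L_1=\operatorname{id}_J$, so $\operatorname{Spec}(1)=\{1\}$, $J_1^1=J$, and condition~\eqref{axcond1} at $a=1$ is automatic because $\ker L_1=0$. Since $(J_\theta,{\rm Y})$ is assumed to be $\mathcal{J}(\tfrac{1}{2})$-axial with exactly the fusion law $\mathcal{J}(\tfrac{1}{2})$ (note that $\mathcal{J}(\tfrac{1}{2})\cup\{0\}=\mathcal{J}(\tfrac{1}{2})$, but we insist that $\odot=\star$), Theorem~\ref{th:gen} gives $\theta\in\coc(J,V;{\rm X})$ (here I use that $J_\theta$ being non-split guarantees the linear independence hypothesis $\{[\theta_\gamma]\}$ required in the theorem). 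In particular, condition~\eqref{axcond2} holds at $a=1$.

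Next I would specialize~\eqref{axcond2} at $a=1$ with $\lambda=\mu=1$. Since $1\star 1=\{1\}$ in $\mathcal{J}(\tfrac{1}{2})$ (so $0\notin 1\star 1$) and every $x,y\in J$ lies in $J_1^1=J$, the decomposition of $xy$ reduces to $z_1=xy$ and the cocycle equation becomes
\[
\theta(x,y)=1^{-1}\theta(1,xy)=\theta(1,xy)\qquad\text{for all }x,y\in J.
\]
Defining the linear map $f\colon J\to V$ by $f(z)=\theta(1,z)$, we obtain $\delta f(x,y)=f(xy)=\theta(1,xy)=\theta(x,y)$, so $\theta=\delta f$ and $[\theta]=0$.

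Since the isomorphism class of $J_\theta$ depends only on $[\theta]$, the extension $J_\theta$ is isomorphic to the trivial extension $J\dot{+}V$, in which $V$ is an annihilator component and hence a direct summand of algebras; by definition this makes $J_\theta$ split, contradicting the hypothesis. The main non-routine ingredient is simply the presence of an axis whose $1$-eigenspace is the whole algebra, namely the unit $1$; once this is identified, the cocycle equation~\eqref{axcond2} collapses directly into the coboundary equation $\theta=\delta(\theta(1,\cdot))$ and there is no substantive obstacle left.
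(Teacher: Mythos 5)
Your proof is correct, and it takes a genuinely different and far more economical route than the paper. The paper proves the theorem by running through Albert's classification type by type ($\mathfrak{A}$ through $\mathfrak{E}$), choosing for each type a family of ``small'' idempotents such as $E_{ii}$ and $E_{ii}+E_{ij}$, extracting a long list of linear constraints on $\theta$ from conditions~\eqref{axcond1} and~\eqref{axcond2}, and exhibiting an explicit $f$ with $\theta=\delta f$ in each case. You instead use the one idempotent the paper never touches, the unit, for which $L_1=\id_J$ makes $J=J_1^1$ the whole $1$-eigenspace; since $1\star 1=\{1\}$ in $\mathcal{J}(\frac12)$, the requirement that $(J_\theta)_1^{1+\theta(1,1)}$ be closed under multiplication collapses condition~\eqref{axcond2} to $\theta(x,y)=\theta(1,xy)$ for all $x,y$, i.e.\ $\theta=\delta\bigl(\theta(1,\cdot)\bigr)$. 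All steps check out: the unit is indeed an idempotent of every finite-dimensional simple Jordan algebra over $\mathbb{C}$ (all five types are unital — though the correct attribution over $\mathbb{C}$ is Albert's classification, cited in the paper as \cite{a46,a47}, rather than Jordan--von Neumann--Wigner, which concerns the formally real case), so $1+\theta(1,1)\in{\rm Y}$ must be an axis, and $[\theta]=0$ forces an annihilator component whenever ${\rm V}\neq 0$. A small streamlining: you do not need Theorem~\ref{th:gen} (and hence need not worry about the linear-independence hypothesis on $\{[\theta_\gamma]\}$) — the ``only if'' direction of Proposition~\ref{prop:ss} applied to the single axis $1+\theta(1,1)$, or even the direct computation $[x+\theta(1,x),y+\theta(1,y)]_\theta=xy+\theta(x,y)\in(J_\theta)_1^{1+\theta(1,1)}$, already yields the identity. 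What your approach buys beyond brevity is generality: the argument never uses simplicity, only unitality and the fact that the unit lies in ${\rm X}$, so it proves the same conclusion for \emph{every} unital Jordan algebra (indeed every unital commutative algebra) with respect to any set of axes containing the unit and any fusion law with $0\notin 1\star 1$ — a statement directly relevant to the conjecture at the end of Section~\ref{s:jordan}. What the paper's longer computation buys in exchange is information your argument does not give, namely the explicit constraints on $\theta$ coming from the non-unital idempotents, which would be needed to treat generating sets of axes that exclude the unit.
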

	
	The significance of Theorem~\ref{th:j} lies on the fact that it generalises the non-existence of non-split central extensions in the variety of Jordan algebras of such an algebra $J$~\cite{glass}. 
	
	To prove it, we will rely on the classification of the finite-dimensional simple Jordan algebras over $\mathbb{C}$~\cite{a46,a47}.

	\begin{itemize}
		
		\item Type $\mathfrak{A}$: algebras of complex $n\times n$-matrices $\mathcal{M}_n(\mathbb{C})$, with product \begin{equation}\label{jprod}
			XY=\frac{1}{2}(X\circ Y + Y\circ X),
		\end{equation}
		where $\circ$ denotes the usual product of matrices.
		\item Type $\mathfrak{B}$: algebras of complex symmetric  $n\times n$-matrices $\textit{Sym}_n(\mathbb{C})$, with product given by~\eqref{jprod}.
		\item Type $\mathfrak{C}$:  algebras of complex  $J_n$-symmetric  $n\times n$-matrices \[\textit{Sym}_n(J,\mathbb{C})=\{X\in\mathcal{M}_n(\mathbb{C})\mid J_n^{-1} X^T J_n=X \},\] where\[J_n=\left(\begin{array}{cc}
			0   & \id_n \\
			-\id_n    & 0
		\end{array}\right),\] with product given by~\eqref{jprod}.
		\item Type $\mathfrak{D}$:  algebras with underlying vector space $\mathbb{C}^n$ and product given by \[xy=(x^Te_n)y+(y^Te_n)x-(x^Ty)e_n,\] where $\{e_i\}_{i=1}^n$ is the canonical basis of $\mathbb{C}^n$.
		\item Type $\mathfrak{E}$: the algebra of $3\times 3$-hermitian matrices over $\mathbb{O}_{\mathbb{C}}$ \[\textit{Herm}_3(\mathbb{O}_{\mathbb{C}})=\{X\in\mathcal{M}_3(\mathbb{O}_{\mathbb{C}})\mid X^T=X^* \},\]
		where $X^*$ means the conjugate matrix of $X$, with product given by~\eqref{jprod}. 
	\end{itemize}

	\begin{proof}
		
		We will deal with each type of the classification separately. Recall that, in any case, if $e$ is an idempotent in $\A$, then $e+\theta(e,e)$ is an idempotent in $\A_{\theta}$.
		
		\begin{itemize}
			\item Type $\mathfrak{A}$.
			
			Let $\A$ be an algebra of type $\mathfrak{A}$, ${\rm V}$ a vector space over $\mathbb{C}$ and $\theta\colon \A\times \A \to {\rm V}$ a bilinear map. Consider the idempotents $a_i=E_{ii}$ for $i=1,\dots,n$, with Peirce decompositions
			\begin{longtable}{lcl}
				$\A_{1}^{a_i}$&$=$& $\mathbb{C} a_i;$\\
				$\A_{0}^{a_i}$&$=$&$\lsp\{E_{jk}\mid j,k=1,\dots,n,\ j,k\neq i\};$\\
				$\A_{1/2}^{a_i}$&$=$&$\lsp\{E_{ij}, E_{ki}\mid j,k=1,\dots,n,\ j,k\neq i\},$
			\end{longtable}
			\noindent and assume, without loss of generality (see equation~\eqref{eq:thid0}), that $\theta(a_i,a_i)=0$ for all $i=1,\dots,n$.  We apply Proposition~\ref{prop:ss} to determine what values $\theta$ must have in order that the idempotents $a_i$ are in fact $\mathcal{J}(\frac{1}{2})$-axes in $\A_{\theta}$. From condition~\eqref{axcond1}, it follows that \[\theta(E_{ii},E_{jk})=0\] for $j,k\neq i$, and from condition~\eqref{axcond2}, we obtain that 
			\begin{longtable}{rcl}
				$\theta(E_{ij},E_{kl})$&$=$&$0, \quad j,l\neq i,\ j\neq k,\ (i,j)\neq (k,l);$\\
				$\theta(E_{ij},E_{jk})$&$=$&$\theta(E_{ii},E_{ik}), \quad j,k\neq i.$
			\end{longtable}
			
			We consider also the idempotents $a_{ij}=E_{ii}+E_{ij}$ for $i,j=1,\dots,n$, $i\neq j$, with eigenspace decomposition
			\begin{longtable}{lcl}
				$\A_{1}^{a_{ij}}$&$=$&$\mathbb{C} a_{ij};$\\
				$\A_{0}^{a_{ij}}$&$=$&$\lsp\{E_{ik}-E_{jk},E_{lk}\mid k,l=1,\dots,n,\ k,l\neq i,\ l\neq j\};$\\
				$\A_{1/2}^{a_{ij}}$&$=$&$\lsp\{E_{ii}-E_{jj}-E_{ji},E_{ik}, E_{li}+E_{lj}\mid k,l=1,\dots,n,\ k,l\neq i,\ l\neq j\},$
			\end{longtable}
			\noindent and study what values $\theta$ must take so that the idempotents $a_{ij}+\theta(a_{ij},a_{ij})$ are $\mathcal{J}(\frac{1}{2})$-axes in $\A_{\theta}$. We obtain from condition~\eqref{axcond1} that \[\theta(E_{ij},E_{ij})=0,\] and from condition~\eqref{axcond2}, that \[\theta(E_{ij},E_{ji})=0,\] for $j\neq i$.
			Finally, it is easy to check that $\theta=\delta f$ for
			\begin{longtable}{rcl}
				$f\colon \A$ & $\to$&$ {\rm V},$ \\
				$E_{ij}$ & $\mapsto$&$ \begin{cases} 2\theta(E_{ii},E_{ij}),\quad \textup{if } i\neq j; \\ 0,\quad \textup{if } i=j,\end{cases}$
			\end{longtable}
			\noindent and therefore $[\theta]=0$.
			
			\item Type $\mathfrak{B}$.
			
			Let $\A$ be an algebra of type $\mathfrak{B}$, ${\rm V}$ a vector space over $\mathbb{C}$ and $\theta\colon \A\times \A \to {\rm V}$ a bilinear map. Consider the idempotents $a_i=E_{ii}$ for $i=1,\dots,n$, with Peirce decompositions
			\begin{longtable}{rcl}
				$\A_{1}^{a_i}$&$=$& $\mathbb{C} a_i;$\\
				$\A_{0}^{a_i}$&$=$&$\lsp\{E_{jk}+E_{kj}\mid j,k\neq i\};$\\
				$\A_{1/2}^{a_i}$&$=$&$\lsp\{E_{ij}+ E_{ji}\mid j\neq i\},$
			\end{longtable}
			\noindent and assume, without loss of generality (see equation~\eqref{eq:thid0}), that $\theta(a_i,a_i)=0$ for all $i=1,\dots,n$. Applying Proposition~\ref{prop:ss}, we  determine what values $\theta$ must have in order that the idempotents $a_i$ are $\mathcal{J}(\frac{1}{2})$-axes also in $\A_{\theta}$. From condition~\eqref{axcond1},
			\begin{longtable}{rcl}
				$\theta(E_{ii},E_{jj})$&$=$&$0;$\\
				$\theta(E_{ii},E_{jk}+E_{kj})$&$=$&$0,$\\
			\end{longtable}
			\noindent for $j,k\neq i$, $j\neq k$, and from condition~\eqref{axcond2}, 
			\begin{longtable}{rcl}
				$\theta(E_{ij}+E_{ji},E_{kl}+E_{lk})$&$=$&$0;$\\
				$\theta(E_{ij}+E_{ji},E_{jk}+E_{kj})$&$=$&$\theta(E_{ii},E_{ik}+E_{ki}),$ \\
			\end{longtable}
			\noindent where the indexes $i,j,k,l$ must take different values.
			
			Now, we take into account the idempotents $a_{ij}=\frac{1}{2}(E_{ii}+E_{jj}+E_{ij}+E_{ji})$ for $i,j=1,\dots,n$, $i\neq j$, with eigenspace decomposition
			\begin{longtable}{lcl}
				$\A_{1}^{a_{ij}}$&$=$&$\mathbb{C} a_{ij};$\\
				$\A_{0}^{a_{ij}}$&$=$&$\lsp\{E_{ii}+E_{jj}-E_{ij}-E_{ji},E_{ik}+E_{ki}-E_{jk}-E_{jk},E_{kl}+E_{lk}\mid  k,l\neq i,j\};$\\
				$\A_{1/2}^{a_{ij}}$&$=$&$\lsp\{E_{ii}-E_{jj},E_{ik}+E_{ki}+E_{jk}+E_{jk}\mid k\neq i,j\}.$\\
			\end{longtable}
			\noindent By condition~\eqref{axcond1}, we obtain that \[\theta(E_{ij}+E_{ji},E_{ij}+E_{ji})=0\] is a necessary condition so that 
			$a_{ij}+\theta(a_{ij},a_{ij})$ is semisimple in $\A_{\theta}$ and follows the fusion law $\mathcal{J}(\frac{1}{2})$. Then $\theta=\delta f$ for
			\begin{longtable}{rcl}
				$f\colon \A$ & $\to$&$ {\rm V},$ \\
				$E_{ij}+E_{ji}$ & $\longmapsto$&$ \begin{cases} 2\theta(E_{ii},E_{ij}+E_{ji}),\quad \textup{if } i\neq j; \\ 0,\quad \textup{if } i=j,\end{cases}$
			\end{longtable}
			\noindent so $[\theta]=0$.
			
			\item Type $\mathfrak{C}$.
			
			Let $\A$ be an algebra of type $\mathfrak{C}$, ${\rm V}$ a vector space over $\mathbb{C}$ and $\theta\colon \A\times \A \to {\rm V}$ a bilinear map. Note that $\{E_{ij}+E_{(n+j)(n+i)},E_{i(n+j)}-E_{j(n+i)},E_{(n+i)j}-E_{(n+j)i}\}_{i,j=1}^n$ is a basis of $\A$. As we have done in the previous cases, we will obtain conditions on $\theta$ necessary to preserve the semisimplicity of the idempotents of $\A$ in $\A_{\theta}$ and the fusion law $\mathcal{J}(\frac{1}{2})$. 
			
			Consider first the idempotents $a_i=E_{ii}+E_{(n+i)(n+i)}$, with Peirce decomposition
			\begin{longtable}{lcl}
				$\A_{1}^{a_i}$&$=$& $\mathbb{C} a_i;$\\
				$\A_{0}^{a_i}$&$=$&$\lsp\{E_{jk}+E_{(n+k)(n+j)},E_{j(n+k)}-E_{k(n+j)},E_{(n+j)k}-E_{(n+k)j}\mid j,k\neq i\};$\\
				$\A_{1/2}^{a_i}$&$=$&$\lsp\{E_{ij}+ E_{(n+j)(n+i)},E_{ji}+ E_{(n+i)(n+j)},E_{i(n+j)}-E_{j(n+i)},E_{(n+i)j}-E_{(n+j)i}\mid j\neq i\}.$ \end{longtable}
			Following the same steps as for algebras of type $\mathfrak{A}$, we obtain 
			\begin{longtable}{lcl}
				$\theta(E_{ii}+E_{(n+i)(n+i)},E_{ii}+E_{(n+i)(n+i)})$&$=$&$0;$\\
				$\theta(E_{ii}+E_{(n+i)(n+i)},E_{jk}+E_{(n+k)(n+j)})$&$=$&$0, \quad  j,k\neq i;$\\
				$\theta(E_{ij}+E_{(n+j)(n+i)},E_{kl}+E_{(n+l)(n+k)})$&$=$&$0, \quad j,l\neq i,\ j\neq k,\ (i,j)\neq (k,l);$\\
				$\theta(E_{ij}+E_{(n+j)(n+i)},E_{jk}+E_{(n+k)(n+j)})$&$=$&$\theta(E_{ii}+E_{(n+i)(n+i)},E_{ik}+E_{(n+k)(n+i)}), \quad j,k\neq i;$\\
				$\theta(E_{ij}+E_{(n+j)(n+i)},E_{ij}+E_{(n+j)(n+i)})$&$=$&$0, \quad j\neq i;$\\
				$\theta(E_{ij}+E_{(n+j)(n+i)},E_{ji}+E_{(n+i)(n+j)})$&$=$&$0, \quad j\neq i.$\\
			\end{longtable}
			\noindent But more information can be obtained from these idempotents. After a careful inspection and the application of conditions~\eqref{axcond1} and~\eqref{axcond2}, respectively, we obtain that 
			\begin{longtable}{lcl}
				$\theta(E_{ii}+E_{(n+i)(n+i)},E_{j(n+k)}-E_{k(n+j)})$&$=$&$0, \quad  j,k\neq i,\ j\neq k;$\\
				$\theta(E_{ii}+E_{(n+i)(n+i)},E_{(n+j)k}-E_{(n+k)j})$&$=$&$0, \quad  j,k\neq i,\ j\neq k;$
			\end{longtable}
			\noindent and
			\begin{longtable}{lcl}
				$\theta(E_{ij}+ E_{(n+j)(n+i)},E_{k(n+l)}-E_{l(n+k)})$&$=$&$0, \quad  k,l\neq i,j,\ k\neq l;$\\
				$\theta(E_{ij}+ E_{(n+j)(n+i)},E_{(n+k)l}-E_{(n+l)k})$&$=$&$0, \quad  k,l\neq i,j,\ k\neq l;$\\
				$\theta(E_{i(n+j)}-E_{j(n+i)},E_{k(n+l)}-E_{l(n+k)})$&$=$&$0, \quad  j,k,l\neq i,\ k\neq l;$\\
				$\theta(E_{(n+i)j}-E_{(n+j)i},E_{(n+k)l}-E_{(n+l)k})$&$=$&$0, \quad  j,k,l\neq i,\ k\neq l;$\\
				$\theta(E_{i(n+j)}-E_{j(n+i)},E_{(n+k)l}-E_{(n+l)k})$&$=$&$0, \quad  j,k,l\neq i,\ k,l\neq j,\ k\neq l;$\\
				$\theta(E_{i(n+j)}-E_{j(n+i)},E_{jk}+E_{(n+k)(n+j)})$&$=$&$0, \quad j,k\neq i, \ j\neq k;$ \\
				$\theta(E_{i(n+k)}-E_{k(n+i)},E_{jk}+E_{(n+k)(n+j)})$&$=$&$\theta(E_{ii}+E_{(n+i)(n+i)},E_{i(n+j)}-E_{j(n+i)}), \quad j,k\neq i;$ \\
				$\theta(E_{(n+i)k}-E_{(n+k)i},E_{jk}+E_{(n+k)(n+j)})$&$=$&$0 \quad j,k\neq i, \ j\neq k;$\\
				$\theta(E_{(n+i)j}-E_{(n+j)i},E_{jk}+E_{(n+k)(n+j)})$&$=$&$\theta(E_{ii}+E_{(n+i)(n+i)},E_{(n+i)k}-E_{(n+k)i}), \quad j,k\neq i;$ \\
				$\theta(E_{i(n+j)}-E_{j(n+i)},E_{(n+j)k}-E_{(n+k)j})$&$=$&$\theta(E_{ii}+E_{(n+i)(n+i)},E_{ik}+E_{(n+k)(n+i)}), \quad  j,k\neq i.$
			\end{longtable}
			
			Finally, take the idempotents \begin{center}
				$a_{ij}=E_{ii}+E_{(n+i)(n+i)}+E_{ij}+E_{(n+j)(n+i)}+E_{i(n+j)}-E_{j(n+i)}$, 	\end{center}whose Peirce decomposition is
			\begin{longtable}{lclll}
				$\A_{1}^{a_{ij}}$&$=$& \multicolumn{3}{l}{$\mathbb{C} a_{ij};$}\\
				$\A_{0}^{a_{ij}}$&$=$&$\lsp\{$&
				$E_{ij}+E_{(n+j)(n+i)}+E_{i(n+j)}-E_{j(n+i)}-E_{jj}-E_{(n+j)(n+j)},E_{kl}+E_{(n+l)(n+k)},$&\\ 
				&&& $  E_{k(n+l)}-E_{l(n+k)},E_{(n+k)l}-E_{(n+l)k},E_{ik}+E_{(n+k)(n+i)}-E_{jk}-E_{(n+k)(n+j)},$&\\
				&&& $  E_{i(n+k)}-E_{k(n+i)}+E_{kj}+E_{(n+j)(n+k)},E_{ik}+E_{(n+k)(n+i)}-E_{(n+j)k}+E_{(n+k)j},$&\\ 
				&&& $ E_{i(n+k)}-E_{k(n+i)}-E_{j(n+k)}+E_{k(n+j)}\mid k,l\neq i,j$&$\};$\\
				$\A_{1/2}^{a_{ij}}$&$=$&$\lsp\{$&$E_{ii}+E_{(n+i)(n+i)}-E_{jj}-E_{(n+j)(n+j)}-E_{ji}-E_{(n+i)(n+j)},E_{ji}+E_{(n+i)(n+j)}$&\\ 
				&&& $  -E_{(n+j)i}+E_{(n+i)j},E_{ij}+E_{(n+j)(n+i)},E_{i(n+j)}-E_{j(n+i)},E_{ik}+E_{(n+k)(n+i)},$&\\ 
				&&& $  E_{i(n+k)}-E_{k(n+i)},E_{jk}+E_{(n+k)(n+j)}-E_{(n+j)k}+E_{(n+k)j}-E_{(n+i)k}+E_{(n+k)i},$&\\ 
				&&& $  E_{kj}+E_{(n+j)(n+k)}+E_{ki}+E_{(n+i)(n+k)}-E_{j(n+k)}+E_{k(n+j)}\mid k\neq i,j$&$\}.$
			\end{longtable}
			\noindent From condition~\eqref{axcond2}, we obtain
			\begin{longtable}{lcl}
				$\theta(E_{ij}+E_{(n+j)(n+i)},E_{i(n+j)}-E_{j(n+i)})$ &$=$&$0;$\\
				$\theta(E_{ij}+E_{(n+j)(n+i)},E_{(n+i)j}-E_{(n+j)i})$ &$=$&$0;$\\
				$\theta(E_{i(n+j)}-E_{j(n+i)},E_{(n+i)j}-E_{(n+j)i})$ &$=$&$0;$
			\end{longtable}
			\noindent and after that, we can apply condition~\eqref{axcond1} to get
			\begin{longtable}{lcl}
				$\theta(E_{i(n+j)}-E_{j(n+i)},E_{i(n+j)}-E_{j(n+i)})$ & $=$&$0;$\\
				$\theta(E_{(n+i)j}-E_{(n+j)i},E_{(n+i)j}-E_{(n+j)i})$ &$=$&$0.$
			\end{longtable}
			
			In summary, we have that $\theta=\delta f$ for 
			\begin{longtable}{rcl}
				$f\colon \A $& $\to$&$ {\rm V};$ \\
				$E_{ij}+E_{(n+j)(n+i)}$ & $\longmapsto$&$ \begin{cases} 2\theta(E_{ii}+E_{(n+i)(n+i)},E_{ij}+E_{(n+j)(n+i)}),\quad \textup{if } i\neq j; \\ 0,\quad \textup{if } i=j;\end{cases}$ \\
				$E_{i(n+j)}-E_{j(n+i)}$ & $\longmapsto$& $2\theta(E_{ii}+E_{(n+i)(n+i)},E_{i(n+j)}-E_{j(n+i)});$ \\
				$E_{(n+i)j}-E_{(n+j)i}$ & $\longmapsto$ &$ 2\theta(E_{ii}+E_{(n+i)(n+i)},E_{(n+i)j}-E_{(n+j)i}),$ \\
			\end{longtable}
			\noindent and therefore $[\theta]=0$.

			\item Type $\mathfrak{D}$.
			
			Let $\A$ be an algebra of type $\mathfrak{D}$, ${\rm V}$ a vector space over $\mathbb{C}$ and $\theta\colon \A\times \A \to {\rm V}$ a bilinear map. Consider  the idempotents $a_i=\frac{1}{2}(Ie_i+e_n)$ for $i=1,\dots,n-1$, where $I$ stands for the imaginary unit. The corresponding Peirce decompositions are
			\begin{longtable}{lcl}
				$\A_{1}^{a_i}$&$=$& $\mathbb{C} a_i;$\\
				$\A_{0}^{a_i}$&$=$&$\mathbb{C}(Ie_i-e_n);$\\
				$\A_{1/2}^{a_i}$&$=$&$\lsp\{e_j \mid j\neq i,n\}.$
			\end{longtable}
			\noindent Assume, without loss of generality, that $\theta(a_i,a_i)=0$ for all $i=1,\dots,n-1$; it follows that \[\theta(e_i,e_n)=I\theta(e_n,e_n)\] for $i\neq n$. In order that the idempotents  $a_i$, are $\mathcal{J}(\frac{1}{2})$-axes in $\A_{\theta}$, it must hold \[\theta(e_i,e_i)=-\theta(e_n,e_n)\] for $i\neq n$, by condition~\eqref{axcond1}, and that \[\theta(e_i,e_j)=0\] for $i,j\neq n, i\neq j$, by condition~\eqref{axcond2}. Then, we have that $\theta=\delta f$ for 
			\begin{longtable}{rcl}
				$f\colon \A $&$ \to$&$ {\rm V}$ \\
				$e_i$ & $\mapsto$&$ \begin{cases} I\theta(e_n,e_n),\quad \textup{if } i\neq n; \\ \theta(e_n,e_n),\quad \textup{if } i=n,\end{cases}$
			\end{longtable}
			\noindent and therefore $[\theta]=0$.
			
			\item Type $\mathfrak{E}$.
			
			Let us recall some notions regarding the octonions and the Jordan algebra $\textit{Herm}_3(\mathbb{O}_{\mathbb{C}})$, also known as Albert algebra, as well as establish some notation. For $x\in \mathbb{O}_{\mathbb{C}}$, write $x=x_0+\sum_{q=1}^7 I_q x_q$, with $x_q\in\mathbb{C}$. The multiplication table of the $I_q$'s is given by 
				$I_qI_r=-\delta_{qr}+\epsilon_{qrs}I_s$ where $\delta_{qr}$ is Kronecker's delta 
				and $\epsilon_{qrs}$ is the totally antisymmetric tensor with value $1$ when 
				$pqr=123,145,176,246,257,347,365$. The so-called Cayley involution of $\mathbb{O}_\mathbb{C}$ is 
				given by $x^*=(x_0+\sum_{q=1}^7 x_q I_q)^*=x_0-\sum_{q=1}^7 x_q I_q$. Also, take $q,r\in\{1,\dots,7\}$, $q\neq r$. If $I_qI_r=I_s$, for some $s\in\{1,\dots,7\}$, write $q\cdot r=s$, $s/r=q$ and $q\backslash s=r$; if $I_qI_r=-I_s$, write $q\cdot r=-s$, $s/r=-q$ and $q\backslash s=-r$.
			
			Let $\A$ be the $27$-dimensional algebra $\textit{Herm}_3(\mathbb{O}_{\mathbb{C}})$, given by all the matrices 
				\[\begin{pmatrix}\a & x & y\\x^* & \b & z\\ y^* & z^* &\gamma\end{pmatrix},
				\qquad \a,\b,\gamma\in\mathbb{C},\quad x,y,z\in\mathbb{O}_{\mathbb{C}}.\] The canonical basis of this algebra is\begin{center} $\{E_{ii}^0\}_{i=1}^3\cup \{E_{ij}^0+E_{ji}^0,E_{ij}^q-E_{ji}^q\}_{1\leq i<j\leq 3,
						q=1,\dots,7}$,\end{center} where:
				\begin{enumerate}
					\item  the unique non-zero entry of $E_{ii}^0$ is $1$
					in the $i$th row and column, $i=1,2,3$;
					\item  the non-zero entries of $E_{ij}^0+E_{ji}^0$ are $1$ in the $i$th row and $j$th column and 
					$1$ in the $j$th row and $i$th column, for $i,j\in\{1,2,3\}$ with $i<j$;
					\item   the non-zero entries of $E_{ij}^q-E_{ji}^q$ are $I_q$ in the $i$th row and $j$th column 
					and  $-I_q$ in the $j$th row and $i$th column, for $i,j\in\{1,2,3\}$ with $i<j$ and $q=1,\dots,7$.
				\end{enumerate}
				Finally, let us write $xE_{ij}=\sum_{q=0}^7 x_qE_{ij}^q$ and $E_{ij}^{-q}=-E_{ij}^q$ for $q=1,\dots,7$.

			The idempotents $a_i=E_{ii}^0$ have Peirce decomposition
			\begin{longtable}{lcl}
				$\A_{1}^{a_i}$&$=$& $\mathbb{C} a_i;$\\
				$\A_{0}^{a_i}$&$=$&$\lsp\{E_{jj}^0,E_{jk}^0+E_{kj}^0,E_{jk}^q-E_{kj}^q\mid j,k\neq i,\ j\neq k,\ q=1,\dots,7\};$\\
				$\A_{1/2}^{a_i}$&$=$&$\lsp\{E_{ij}^0+E_{ji}^0,E_{ij}^q-E_{ji}^q\mid j\neq i,\ q=1,\dots,7\}.$
			\end{longtable}
			\noindent Consider also a complex vector space ${\rm V}$ a bilinear map $\theta\colon \A\times \A \to {\rm V}$. Without loss of generality (see equation~\eqref{eq:thid0}), we will assume that $\theta(a_i,a_i)=0$. Applying condition~\eqref{axcond1}, we know that we must impose
			\begin{longtable}{lcl}
				$\theta(E_{ii}^0,E_{jj}^0)$&$=$&$0;$\\
				$\theta(E_{ii}^0,E_{jk}^0+E_{kj}^0)$&$=$&$0;$\\
				$\theta(E_{ii}^0,E_{jk}^q-E_{kj}^q)$&$=$&$0,$\\
			\end{longtable}
			\noindent for $j,k\neq i$, $j\neq k$ and $q=1,\dots,7$, so that the idempotents $a_i$ are semisimple in $\A_{\theta}$;  from condition~\eqref{axcond2}, we obtain that each $a_i$ is a $\mathcal{J}(\frac{1}{2})$-axis if and only if
			\begin{longtable}{lcl}
				$\theta(E_{ij}^0+E_{ji}^0,E_{jk}^0+E_{kj}^0)$&$=$&$\theta(E_{ii}^0,E_{ik}^0+E_{ki}^0),$ \\
				$\theta(E_{ij}^0+E_{ji}^0,E_{jk}^q-E_{kj}^q)$&$=$&$\theta(E_{ii}^0,E_{ik}^q-E_{ki}^q),$ \\
				$\theta(E_{ij}^q-E_{ji}^q,E_{jk}^q-E_{kj}^q)$&$=$&$-\theta(E_{ii}^0,E_{ik}^0+E_{ki}^0),$ \\
				$\theta(E_{ij}^q-E_{ji}^q,E_{jk}^r-E_{kj}^r)$&$=$&$\theta(E_{ii}^0,E_{ik}^{q\cdot r}-E_{ki}^{q\cdot r}),$
			\end{longtable}
			\noindent for $j,k\neq i$, $j\neq k$ and $q,s\in \{1,\dots,7\}$, $q\neq r$.
			
			Additionally, from the idempotents $a_{ij}^0=\frac{1}{2}(E_{ii}^0+E_{jj}^0+E_{ij}^0+E_{ji}^0)$, with Peirce decomposition
			\begin{longtable}{lclll}
				$\A_{1}^{a_{ij}^0}$&$=$& \multicolumn{3}{l}{$\mathbb{C} a_{ij}^0;$}\\
				$\A_{0}^{a_{ij}^0}$&$=$&$\lsp\{$ &$  E_{kk}^0,E_{ii}^0+E_{jj}^0-E_{ij}^0-E_{ji}^0,E_{ik}^0+E_{ki}^0-E_{jk}^0-E_{kj}^0,$\\ 
				&&&$E_{ik}^q-E_{ki}^q-E_{jk}^q+E_{kj}^q\mid j,k\neq i,\ j\neq k,\ q=1,\dots,7$&$\};$\\
				$\A_{1/2}^{a_{ij}^0}$&$=$&$\lsp\{$&$E_{ii}^0-E_{jj}^0,E_{ij}^q-E_{ji}^q,E_{ik}^0+E_{ki}^0+E_{jk}^0+E_{kj}^0,$\\ &&& 
				\multicolumn{1}{r}{$E_{ik}^q-E_{ki}^q+E_{jk}^q-E_{kj}^q\mid j\neq i,\ q=1,\dots,7$}&$\},$
			\end{longtable}
			\noindent and from conditions~\eqref{axcond1} and~\eqref{axcond2}, respectively, we obtain the next necessary conditions in order that that the idempotents $a_{ij}^0+\theta(a_{ij}^0,a_{ij}^0)$ are $\mathcal{J}(\frac{1}{2})$-axes in $\A_{\theta}$: 
			\begin{longtable}{lcl}
				$\theta(E_{ij}^0+E_{ji}^0,E_{ij}^0+E_{ji}^0)$&$=$&$0,$\\
				$\theta(E_{ij}^0+E_{ji}^0,E_{ij}^q-E_{ji}^q)$&$=$&$0,$
			\end{longtable}
			\noindent for $q=1,\dots, 7$.
			
			Finally, from the idempotents $a_{ij}^q=\frac{1}{2}(E_{ii}^0+E_{jj}^0+E_{ij}^q-E_{ji}^q)$, for $q=1,\dots,7$, with Peirce decomposition
			\begin{longtable}{lclll}
				$\A_{1}^{a_{ij}^q}$&$=$& $\mathbb{C} a_{ij}^q;$\\
				$\A_{0}^{a_{ij}^q}$&$=$&$\lsp\{$&$E_{kk}^0,E_{ii}^0+E_{jj}^0-E_{ij}^q+E_{ji}^q,E_{ik}^0+E_{ki}^0+E_{jk}^q-E_{kj}^q,E_{ik}^q-E_{ki}^q-E_{jk}^0-E_{kj}^0,$\\ 
				&&& \multicolumn{1}{r}{$E_{ik}^r-E_{ki}^r+E_{jk}^{r/q}-E_{kj}^{r/q} \mid i,j\neq k,\ r=1,\dots,7, \ r\neq q$}&$\};$\\
				
				$\A_{1/2}^{a_{ij}^q}$&$=$&$\lsp\{$&$E_{ii}^0-E_{jj}^0,E_{ij}^0+E_{ji}^0,E_{ij}^r-E_{ji}^r,E_{ik}^0+E_{ki}^0-E_{jk}^q+E_{kj}^q,E_{ik}^q-E_{ki}^q+E_{jk}^0+E_{kj}^0,$\\ 
				&&&\multicolumn{1}{r}{$E_{ik}^r-E_{ki}^r-E_{jk}^{r/q}+E_{kj}^{r/q}\mid i,j\neq k,\ r=1,\dots,7, \ r\neq q$}&$\}.$
			\end{longtable}
			\noindent and, again, from conditions~\eqref{axcond1} and~\eqref{axcond2}, respectively, it follows that \begin{longtable}{lcl}
				$\theta(E_{ij}^q-E_{ji}^q,E_{ij}^q-E_{ji}^q)$&$=$&$0,$\\			$\theta(E_{ij}^q-E_{ji}^q,E_{ij}^r-E_{ji}^r)$&$=$&$0,$
			\end{longtable}
			\noindent for $r=1,\dots, 7$, $r\neq q$, are also necessary conditions so that $a_{ij}^q+\theta(a_{ij}^q,a_{ij}^q)$ are $\mathcal{J}(\frac{1}{2})$-axes in  $\A_{\theta}$.
			
			Then, we have that $\theta=\delta f$ for 
			\begin{longtable}{rcl}
				$f\colon \A$ & $\to$&$ {\rm V}$ \\
				$E_{ij}^0+E_{ji}^0$ & $\longmapsto$&$ \begin{cases} 2\theta(E_{ii}^0,E_{ij}^0+E_{ji}^0),\quad \textup{if } i\neq j; \\ 0,\quad \textup{if } i=j;\end{cases}$ \\
				$E_{ij}^q-E_{ji}^q $& $\longmapsto$&$ 2\theta(E_{ii}^0,E_{ij}^q-E_{ji}^q),$
			\end{longtable}
			\noindent and therefore $[\theta]=0$.
		\end{itemize}
		
	\end{proof}

	We devote the rest of this section to study $\mathcal{J}(\frac{1}{2})$-axial central extensions of different  Jordan algebras generated by idempotents. 
	
	Denote by $\mathcal{S}_n$, $\mathcal{J}_n$ and $\mathcal{T}_n$, respectively, the following Jordan algebras of dimension $n$: 
	
	
	\begin{longtable}{llllll}
		$\mathcal{S}_n$ & $e_ie_i=e_i $ &&&& $\quad i=1,\dots,n$;   \\
		$\mathcal{J}_n$ & $ee=e$ & $en_i=\frac{1}{2}n_i$ &&& $ \quad i=1,\dots,n-1$;  \\
		$\mathcal{T}_n$ & $ee=e$ & $en_{1}=n_{1}$ & $n_{2}n_2=n_1$ & $en_i=\frac{1}{2}n_i$ & $ \quad i=2,\dots,n-1$,
	\end{longtable}
	\noindent where the non-displayed products are assumed to be zero. 
	
	\begin{prop}\label{p:sn}
		There do not exist $\mathcal{J}(\frac{1}{2})$-axial non-split central extensions of $(\mathcal{S}_n,{\rm X})$ for any set ${\rm X}$ of generating axes of $\mathcal{S}_n$.
	\end{prop}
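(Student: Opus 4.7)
The plan is to argue by contradiction: suppose a non-split $\mathcal{J}(\frac{1}{2})$-axial central extension $(\mathcal{S}_n)_{\theta}$ exists with respect to ${\rm Y} = \{a + \theta(a,a) \mid a \in {\rm X}\}$. As usual we may assume $\Ann((\mathcal{S}_n)_{\theta}) = {\rm V}$, so that $\{[\theta_{\gamma}]\}_{\gamma \in \Gamma}$ is linearly independent. The crucial observation is that every idempotent of $\mathcal{S}_n$ has the form $a_S = \sum_{i \in S} e_i$ for a non-empty $S \subseteq \{1,\ldots,n\}$, with Peirce decomposition
\[(\mathcal{S}_n)_1^{a_S} = \lsp\{e_i : i \in S\}, \qquad (\mathcal{S}_n)_0^{a_S} = \lsp\{e_i : i \notin S\}, \qquad (\mathcal{S}_n)_{1/2}^{a_S} = 0.\]
In particular every element of ${\rm X}$ has this form, and I will exploit the resulting rigidity to force $[\theta] = 0$.

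The next step is to apply Proposition~\ref{prop:ss} to each $a_S \in {\rm X}$. Condition~\eqref{axcond1} gives $\sum_{j \in S} \theta(e_j, e_i) = 0$ for every $i \notin S$. Condition~\eqref{axcond2} with $\lambda = 1$, $\mu = 0$ (so that $1\star 0 = \emptyset$ and the right-hand side is an empty sum) forces $\theta(e_j, e_i) = 0$ for all $j \in S$ and $i \notin S$. Finally, condition~\eqref{axcond2} with $\lambda = \mu = 1$ applied to $x = e_j$, $y = e_k$ for distinct $j, k \in S$ gives $\theta(e_j, e_k) = 0$, since $e_j e_k = 0$ makes the right-hand side vanish.

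To bootstrap these local relations to a global vanishing, I will use that ${\rm X}$ generates $\mathcal{S}_n$: for every pair $i \neq j$ there must exist $a_S \in {\rm X}$ with $|S \cap \{i, j\}| = 1$, for otherwise the transposition $e_i \leftrightarrow e_j$ would be an algebra automorphism of $\mathcal{S}_n$ fixing every element of ${\rm X}$ and hence all of $\langle {\rm X}\rangle = \mathcal{S}_n$, which is absurd. Combined with the previous paragraph this yields $\theta(e_i, e_j) = 0$ for all $i \neq j$. To finish, define $f \colon \mathcal{S}_n \to {\rm V}$ by $f(e_i) = \theta(e_i, e_i)$; then $\delta f(e_i, e_j) = \delta_{ij}\theta(e_i, e_i) = \theta(e_i, e_j)$ for all $i, j$, so $\theta = \delta f$ and $[\theta] = 0$, contradicting the linear independence of $\{[\theta_{\gamma}]\}$. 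I do not foresee a serious obstacle here: the computations with Proposition~\ref{prop:ss} are routine, and the only mildly technical ingredient is the separation-by-axes argument, which is immediate from the automorphism observation above.
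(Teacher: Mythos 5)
Your proof is correct and follows essentially the same route as the paper's: compute the Peirce decomposition of an arbitrary idempotent $a_S=\sum_{i\in S}e_i$ of $\mathcal{S}_n$, use conditions~\eqref{axcond1} and~\eqref{axcond2} to force all off-diagonal values $\theta(e_i,e_j)$, $i\neq j$, to vanish, and then exhibit $\theta=\delta f$ for $f(e_i)=\theta(e_i,e_i)$. The only (cosmetic) difference is in how generation is exploited: the paper observes that ${\rm X}$ must span $\mathcal{S}_n$, so for each $i$ there is a single axis $a$ with $i$-th coefficient equal to $1$, and that one axis already yields $\theta(e_i,e_j)=0$ for every $j\neq i$ (covering both the $1\star 1$ and the $1\star 0$ cases at once), whereas you locate a separating axis for each pair $(i,j)$ via a transposition-automorphism argument; both are valid.
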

	
	\begin{proof}
		Fix $n\in \mathbb{Z}_+$, and let ${\rm X}$ be a set of axes 
		generating $\mathcal{S}_n$. Note that $\mathcal{S}_n$ is just the linear span of $\rm X$. Any $a\in {\rm X}$ satisfies $a=\sum_{i=1}^n \alpha_i^a e_i$, with $\alpha_i^a\in\{0,1\}$. Straightforward calculations show that 
		\begin{longtable}{lcl}
			$(\mathcal{S}_n)_1^a$&$=$&$\mispan\{ e_i\mid \alpha_i=1\};$ \\
			$(\mathcal{S}_n)_0^a$&$=$&$\mispan\{ e_i\mid \alpha_i=0\};$\\
			$(\mathcal{S}_n)_{1/2}^a$&$=$&$0.$
		\end{longtable}
		
		
		Let $(\mathcal{S}_n)_{\theta}$ be a $\mathcal{J}(\frac{1}{2})$-axial central extension of $(\mathcal{S}_n,{\rm X})$ with respect to a vector space $\rm V$, and fix $i\in\{1,\dots,n\}$. As ${\rm X}$ spans $\mathcal{S}_n$, there exists an axis $a_i\in X$ such that $\alpha^{a_i}_i=1$; by condition~\eqref{axcond2}, $\theta(e_i,e_j)=0$ for all $j=1,\dots,n$, $j\neq i$. It follows that $\theta(e_i,e_j)=0$ for all $i,j=1,\dots,n$, $i\neq j$. Necessarily, we have that $\theta=\delta f $ for 
		\begin{longtable}{rcl}
			$f\colon \mathcal{S}_n$ & $\to$ &$ {\rm V}$ \\
			$e_i$ & $\longmapsto$ & $\theta(e_i,e_i),$
		\end{longtable}
		\noindent and thus $[\theta]=0$.
	\end{proof}
	
	\begin{prop}\label{p:jn}
		Every commutative central extension of $\mathcal{J}_n$ is a Jordan algebra. 
	\end{prop}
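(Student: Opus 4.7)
The plan is to verify directly that the Jordan identity $(X\cdot X)(Y\cdot X)=((X\cdot X)\cdot Y)\cdot X$ holds in any commutative central extension $(\mathcal{J}_n)_\theta=\mathcal{J}_n\dot{+}\mathrm{V}$ with bracket $[u+v,u'+v']_\theta=uu'+\theta(u,u')$. Since $\mathrm{V}\subseteq\Ann((\mathcal{J}_n)_\theta)$, any term involving a $\mathrm{V}$-component vanishes in triple products, so it suffices to check the identity for $X=x$, $Y=y$ with $x,y\in\mathcal{J}_n$.

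First I would expand both sides. Using commutativity of $\mathcal{J}_n$ and symmetry of $\theta$, one gets
\begin{align*}
(x\cdot x)(y\cdot x) &= x^2(xy)+\theta(x^2,xy),\\
((x\cdot x)\cdot y)\cdot x &= (x^2y)x+\theta(x^2y,x).
\end{align*}
The underlying Jordan identity in $\mathcal{J}_n$ gives $x^2(xy)=(x^2y)x$, so the problem reduces to proving
\[\theta(x^2,xy)=\theta(x^2y,x)\qquad \text{for all } x,y\in\mathcal{J}_n \text{ and every symmetric } \theta.\]

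The key observation is that $\mathcal{J}_n$ has a very rigid multiplicative structure: for any $x=\alpha e+\sum_{i=1}^{n-1}\beta_i n_i$, a direct computation using $e^2=e$, $en_i=\tfrac12 n_i$ and $n_in_j=0$ gives
\[x^2 = \alpha^2 e + 2\alpha\sum_i\beta_i(en_i) = \alpha^2 e+\alpha\sum_i\beta_i n_i = \alpha x.\]
Consequently $x^2y=\alpha(xy)$ as well. Plugging these into the reduced identity and using the symmetry of $\theta$:
\[\theta(x^2,xy)=\alpha\,\theta(x,xy)=\alpha\,\theta(xy,x)=\theta(\alpha(xy),x)=\theta(x^2y,x),\]
which closes the argument.

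There is no real obstacle here; the only thing to recognise is the identity $x^2=\alpha x$ in $\mathcal{J}_n$, which makes the otherwise potentially complicated cocycle-style condition on $\theta$ collapse to a triviality. In particular, no restriction on $\theta$ beyond symmetry (i.e.\ commutativity of the extension) is needed.
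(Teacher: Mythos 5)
Your proof is correct and follows the same strategy as the paper: reduce the Jordan identity in $(\mathcal{J}_n)_\theta$ to the condition $\theta(x^2,xy)=\theta(x^2y,x)$ for all $x,y\in\mathcal{J}_n$ and verify it for every symmetric $\theta$. Your observation that $x^2=\alpha x$ in $\mathcal{J}_n$ (where $\alpha$ is the coefficient of $e$ in $x$) lets you settle this identity in one line using only the symmetry of $\theta$, whereas the paper expands $xy$, $x^2$ and $yx^2$ explicitly and compares the two sides term by term --- a small but genuine simplification of the same argument.
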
 
	
	\begin{proof}
		Fix $n\in \mathbb{Z}_+$, and let $(\mathcal{J}_n)_{\theta}$ be a commutative central extension of $\mathcal{J}_n$. Take two arbitrary elements $x,y\in \mathcal{J}_n$, and write $x=\alpha e + v$, $y=\beta e + w$ for some $v,w\in\lsp\{n_i\mid i=1,\dots,n-1\}$. One can check that
			\begin{longtable}{lcl}
				$xy$&$=$& $\alpha\beta e+ \frac{1}{2}\beta v + \frac{1}{2} \a w;$ \\
				$x^2$&$=$&$\alpha^2e+\a v;$ \\
				$yx^2$&$=$&$\alpha^2\b e + \frac{1}{2}\a\beta v + \frac{1}{2} \a^2 w,$
			\end{longtable}
			\noindent and that
			\[\theta(xy,x^2)=\theta(x,yx^2)=\alpha^3\b \theta(e,e) + \frac{3}{2} \a^2\b \theta(e,v) + \frac{1}{2} \a^3 \theta(e,w) + \frac{1}{2} \a\b \theta(v,v) + \frac{1}{2} \a^2 \theta(v,w).\]
			Then, $\theta$ is a $2$-cocycle in the variety of Jordan algebras, and $(\mathcal{J}_n)_{\theta}$ is Jordan.
	\end{proof}
	
	
	In view of Proposition~\ref{p:jn}, we highlight the major difference between the cohomology of $\mathcal{J}_n$ with trivial coefficients and the cohomology with coefficients in itself (see~\cite[Theorem 10]{gkp}).
	
	
	Proposition~\ref{p:jn} has the following immediate corollary.
	
	\begin{cor}\label{c:jn}
		Every $\mathcal{J}(\frac{1}{2})$-axial central extension of $(\mathcal{J}_n,{\rm X})$, for any set $\rm X$ of generating axes of $\mathcal{J}_n$, is a Jordan algebra.
	\end{cor}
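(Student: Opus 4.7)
The plan is simply to observe that the hypothesis of Proposition~\ref{p:jn} is automatically satisfied in this setting. Recall that, by the standing definition in Section~\ref{s:prel}, an $\mathcal{F}$-axial algebra is a pair $(\A,{\rm X})$ in which $\A$ is a \emph{commutative} algebra; consequently, any $\mathcal{J}(\frac{1}{2})$-axial central extension $(\mathcal{J}_n)_\theta$ of $(\mathcal{J}_n,{\rm X})$ is, in particular, a commutative central extension of $\mathcal{J}_n$.

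With that observation in hand, Proposition~\ref{p:jn} immediately gives that $(\mathcal{J}_n)_\theta$ is a Jordan algebra, and since the choice of ${\rm X}$ played no role in the argument, the conclusion holds uniformly over all sets ${\rm X}$ of generating axes of $\mathcal{J}_n$. Thus the corollary follows with essentially no additional work; the only thing to verify (and it is a one-line check from the definitions) is the commutativity of $(\mathcal{J}_n)_\theta$, which is why the statement is labelled as an immediate corollary. There is no real obstacle: the whole substance sits in Proposition~\ref{p:jn}, whose proof checks the Jordan identity $\theta(xy,x^2)=\theta(x,yx^2)$ directly on the generic pair of elements of $\mathcal{J}_n$.
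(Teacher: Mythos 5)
Your argument is correct and is exactly the route the paper takes: the paper states Corollary~\ref{c:jn} as an ``immediate corollary'' of Proposition~\ref{p:jn}, the implicit justification being precisely your observation that an axial algebra is commutative by definition, so any $\mathcal{J}(\frac{1}{2})$-axial central extension of $(\mathcal{J}_n,{\rm X})$ is a commutative central extension of $\mathcal{J}_n$. Nothing further is needed.
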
	
	
	\begin{prop}\label{p:tn}
		Every $\mathcal{J}(\frac{1}{2})$-axial central extension of $(\mathcal{T}_n,{\rm X})$, for any set $\rm X$ of generating axes of $\mathcal{T}_n$, is a Jordan algebra.
	\end{prop}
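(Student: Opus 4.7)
The plan is to exploit that $\mathcal{T}_n$ is itself a Jordan algebra; a direct check confirms this, so the Jordan identity $(xy)x^2=x(yx^2)$ holds in $\mathcal{T}_n$. Because $V\subseteq\Ann((\mathcal{T}_n)_\theta)$, the Jordan identity in the central extension reduces to the Jordan $2$-cocycle relation
\[
\theta(xy,x^2)=\theta(x,yx^2)\qquad\text{for all }x,y\in\mathcal{T}_n,
\]
and my aim is to derive this from the axial conditions~\eqref{axcond1} and~\eqref{axcond2} applied to a single axis of $X$.

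First I would determine all axes of $\mathcal{T}_n$ and their Peirce decompositions. A short calculation shows that the non-zero idempotents are exactly
\[
a=e+\b_2 n_2-\b_2^2 n_1+\sum_{j\geq 3}\b_j n_j,\qquad \b_2,\dots,\b_{n-1}\in F,
\]
each of which is a $\mathcal{J}(\tfrac{1}{2})$-axis (since $\mathcal{T}_n$ is Jordan), with
\[
(\mathcal{T}_n)_0^a=\{0\},\quad (\mathcal{T}_n)_1^a=\lsp\{u_a,\,n_1\},\quad (\mathcal{T}_n)_{1/2}^a=\lsp\{n_2-2\b_2 n_1,\,n_j:j\geq 3\},
\]
where $u_a:=e+\b_2 n_2+\sum_{j\geq 3}\b_j n_j$. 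Since $(\mathcal{T}_n)_0^a=0$, condition~\eqref{axcond1} is automatic.

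Next, pick any axis $a\in X$ (which exists since $X$ generates $\mathcal{T}_n$), and apply~\eqref{axcond2}: the pair $(n_1,n_1)\in(\mathcal{T}_n)_1^a\times(\mathcal{T}_n)_1^a$ with vanishing product yields $\theta(n_1,n_1)=0$; the pair $(n_1,n_j)\in(\mathcal{T}_n)_1^a\times(\mathcal{T}_n)_{1/2}^a$ with vanishing product, for $j\geq 3$, yields $\theta(n_1,n_j)=0$; and $(n_1,n_2-2\b_2 n_1)\in(\mathcal{T}_n)_1^a\times(\mathcal{T}_n)_{1/2}^a$ with vanishing product, combined with the relation $\theta(n_1,n_1)=0$, yields $\theta(n_1,n_2)=0$.

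Finally, I would verify that the Jordan $2$-cocycle relation is equivalent to precisely the relations $\theta(n_1,n_i)=0$ for $i=1,\dots,n-1$. Polarising in $x$, the cocycle relation becomes a multilinear identity in $(x_1,x_2,x_3,y)$ which, by multilinearity, need only be checked on basis tuples. Using that $n_i n_j=0$ unless $\{i,j\}=\{2,2\}$ (in which case $n_2^2=n_1$) together with the simple products involving $e$, almost every basis tuple produces a trivial or symmetric identity; the remaining ones collapse to $\theta(n_1,n_i)=0$ for all $i$, which were already established in the previous step. The main obstacle is this last, bookkeeping-heavy verification — one must carefully work through enough tuples to ensure that no additional relation is missed.
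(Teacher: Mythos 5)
Your proposal is correct and follows essentially the same route as the paper: compute the Peirce decomposition of an arbitrary idempotent $a=e+\b_2 n_2-\b_2^2 n_1+\sum_{j\geq 3}\b_j n_j$ (with $(\mathcal{T}_n)_0^a=0$, $(\mathcal{T}_n)_1^a=\lsp\{a,n_1\}$ and $(\mathcal{T}_n)_{1/2}^a=\lsp\{n_2-2\b_2 n_1,n_j \mid j\geq 3\}$), apply condition~\eqref{axcond2} to a single axis of ${\rm X}$ to deduce $\theta(n_1,n_i)=0$ for all $i$, and then verify directly that these relations force $\theta(xy,x^2)=\theta(x,yx^2)$. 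The only cosmetic difference is that you classify all idempotents first and phrase the final verification via polarisation, whereas the paper works with a generic $a\in{\rm X}$ and refers the final check to the computation pattern of Proposition~\ref{p:jn}.
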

	
	\begin{proof}
		Fix $n\in \mathbb{Z}_+$, and let ${\rm X}$ be a set of axes 
		generating $\mathcal{T}_n$. Take $a\in {\rm X}$; it is of the form $a=e-\alpha_2^2n_1+\sum_{i=2}^{n-1} \alpha_i n_i$. Some calculations lead to 
		\begin{longtable}{lcl}
			$(\mathcal{T}_n)_1^a$&$=$&$\mispan\{a,n_1\};$ \\
			$(\mathcal{T}_n)_0^a$&$=$&$0;$ \\
			$(\mathcal{T}_n)_{1/2}^a$&$=$&$\mispan\{2\alpha_2 n_1-n_2,n_i\mid i=3,\dots,n-1\}.$
		\end{longtable}
		\noindent Assume now that $\theta\in\Za(\mathcal{T}_n,{\rm V};{\rm X})$ for some vector space $\rm V$. Applying condition~\eqref{axcond2}, we have that \[\theta(n_1,n_i)=\theta(a,0)=0\] for $i=1$ and $i=3,\dots,n-1$, and \[\theta(n_1,2\alpha_2 n_1-n_2)=\theta(a,0)=0,\] which implies $\theta(n_1,n_2)=0$. Now, it is just a matter of routine to check that $\theta(xy,x^2)=\theta(x,yx^2)$ for any two arbitrary $x,y\in\mathcal{T}_n$, as it was made in Proposition~\ref{p:jn}.
	\end{proof}
	
	An easy generalisation of Corollary~\ref{c:jn} and Proposition~\ref{p:tn} is the following.
	
	\begin{cor}\label{c:ds}
		Let $n,m$ be two positive integers.
		Every $\mathcal{J}(\frac{1}{2})$-axial central extension of $(\mathcal{J}_n\oplus \mathcal{J}_m,{\rm X})$ or $(\mathcal{T}_n\oplus \mathcal{T}_m,{\rm X})$, for any sets $\rm X$ of generating axes, is a Jordan algebra.
	\end{cor}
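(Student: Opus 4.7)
The plan is to adapt the arguments of Propositions~\ref{p:jn} and~\ref{p:tn} to the direct-sum setting. Fix a generating set ${\rm X}$ of axes for $\A = \mathcal{J}_n \oplus \mathcal{J}_m$, a vector space ${\rm V}$, and a cocycle $\theta \in \Za(\A,{\rm V};{\rm X})$; the goal is to verify the Jordan identity $\theta(xy, x^2) = \theta(x, y x^2)$ for every $x, y \in \A$. Writing $x = x_1 + x_2$ and $y = y_1 + y_2$ with the first components in $\mathcal{J}_n$ and the second in $\mathcal{J}_m$, the annihilation of cross products in $\A$ yields $xy = x_1 y_1 + x_2 y_2$, $x^2 = x_1^2 + x_2^2$ and $y x^2 = y_1 x_1^2 + y_2 x_2^2$. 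Bilinear expansion then separates $\theta(xy, x^2) - \theta(x, y x^2)$ into a diagonal part $\sum_{i=1,2}[\theta(x_i y_i, x_i^2) - \theta(x_i, y_i x_i^2)]$ and a cross part
\[\Delta(x, y) = \theta(x_1 y_1, x_2^2) + \theta(x_2 y_2, x_1^2) - \theta(x_1, y_2 x_2^2) - \theta(x_2, y_1 x_1^2).\]

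The diagonal part vanishes by Proposition~\ref{p:jn}: restricting $\theta$ to $\mathcal{J}_n \times \mathcal{J}_n$ is a symmetric bilinear map giving $\mathcal{J}_n$ the structure of a commutative central extension, which is automatically Jordan, and symmetrically for $\mathcal{J}_m$. For the analogous statement about $\mathcal{T}_n \oplus \mathcal{T}_m$ one substitutes Proposition~\ref{p:tn}, additionally observing that the axial hypotheses on ${\rm X}$ restrict to axial hypotheses on the projections $\pi_i({\rm X})$, so that the specific vanishings $\theta(n_1, n_j) = 0$ extracted in the proof of Proposition~\ref{p:tn} remain available in each summand.

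To eliminate $\Delta(x, y)$ I would invoke the axial conditions on the individual axes of ${\rm X}$. Each $a \in {\rm X}$ writes as $(a_1, a_2)$ with $a_i$ either zero or a nonzero idempotent of the $i$-th summand of the form $e + v$, $v$ nilpotent. When $a_2 = 0$ one has $\ker L_a \supseteq \{0\} \oplus \mathcal{J}_m$, so condition~\eqref{axcond1} forces $\theta((a_1, 0), (0, y)) = 0$ for every $y \in \mathcal{J}_m$; the symmetric statement holds when $a_1 = 0$. When both components are nonzero the $1$-eigenspace decomposes as $\A_1^a = \mathbb{C}(a_1, 0) \oplus \mathbb{C}(0, a_2)$ with $(a_1, 0)(0, a_2) = 0$; condition~\eqref{axcond2} with $\lambda = \mu = 1$ then produces $\theta((a_1, 0), (0, a_2)) = 0$, and pairing $1$-eigenvectors against $\tfrac{1}{2}$-eigenvectors from the two summands yields $\theta((a_1, 0), (0, n)) = \theta((0, a_2), (n', 0)) = 0$ for every nilpotent $n \in \mathcal{J}_m$ and $n' \in \mathcal{J}_n$.

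The main technical obstacle will be assembling these individual vanishings, taken over all axes of ${\rm X}$, to conclude that $\theta$ vanishes identically on $\mathcal{J}_n \times \mathcal{J}_m$. The crucial input is that ${\rm X}$ generates $\A$, which forces the projections $\pi_i({\rm X})$ to generate $\mathcal{J}_n$ and $\mathcal{J}_m$ respectively; combined with the nilpotent differences of axes sharing a common component, this should yield enough ``resolved'' pure elements to cover all cross pairs. Once $\theta$ is shown to vanish on $\mathcal{J}_n \times \mathcal{J}_m$, the cross contribution $\Delta(x, y)$ is zero, the Jordan identity holds throughout $\A_\theta$, and the case $\mathcal{T}_n \oplus \mathcal{T}_m$ follows by the same scheme with Proposition~\ref{p:tn} substituted for Proposition~\ref{p:jn}.
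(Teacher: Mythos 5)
Your skeleton coincides with the paper's: the paper also reduces the Jordan cocycle identity to showing $\theta(\mathcal{R}_n,\mathcal{R}_m)=0$ for $\mathcal{R}=\mathcal{R}_n\oplus\mathcal{R}_m$, after noting that $\mathcal{R}_0^a=0$ for each axis, and then quotes Corollary~\ref{c:jn} and Proposition~\ref{p:tn} for the two summands. Your per-axis computations (the case $a_2=0$ via condition~\eqref{axcond1}, and the vanishings $\theta(a_1,a_2)=\theta(a_1,n)=\theta(n',a_2)=0$ via condition~\eqref{axcond2} applied to $1\star 1$ and $1\star\frac{1}{2}$) are correct for $\mathcal{J}_n\oplus\mathcal{J}_m$, and there the assembly does close: for $\mathcal{J}_n$ the product of two idempotents $e+v$ and $e+w$ is $e+\frac{1}{2}(v+w)$, so the subalgebra generated by a set of idempotents equals its linear span, whence $\lsp \pi_1({\rm X})=\mathcal{J}_n$ and bilinearity kills all cross pairs.

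The step you yourself flag as the main obstacle is, however, a genuine gap for $\mathcal{T}_n\oplus\mathcal{T}_m$, and the mechanism you propose does not repair it. First, your identity $\A_1^a=\mathbb{C}(a_1,0)\oplus\mathbb{C}(0,a_2)$ is false there: by the proof of Proposition~\ref{p:tn}, $(\mathcal{T}_n)_1^{a_1}=\lsp\{a_1,n_1\}$ is two-dimensional, so $\A_1^a$ is four-dimensional. Second, $\lsp \pi_1({\rm X})$ need not be all of $\mathcal{T}_n$ even when $\pi_1({\rm X})$ generates it: $\{e,\ e-n_1+n_2\}$ generates $\mathcal{T}_3$ (since $(n_1-n_2)^2=n_1$) but spans only a two-dimensional subspace, and the ``nilpotent differences of axes'' you invoke lie inside that same span, so they resolve nothing new --- in this example $\theta(n_2,\cdot)$ against the other summand is left untreated, and these are exactly the $\frac{1}{2}\star\frac{1}{2}$ cross pairs on which condition~\eqref{axcond2} is silent. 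The missing ingredient is precisely the extra eigenvector $n_1\in(\mathcal{T}_n)_1^{a_1}$: every axis with both components non-zero also forces $\theta(n_1^{(n)},\mathcal{T}_m)=0=\theta(\mathcal{T}_n,n_1^{(m)})$ via $1\star 1$ and $1\star\frac{1}{2}$, and one checks that $\lsp \pi_1({\rm X})+Fn_1=\mathcal{T}_n$ for any generating set of idempotents; this closes the cross-vanishing. A secondary point of order: to apply Proposition~\ref{p:tn} to the diagonal you need $\theta\vert_{\mathcal{T}_n\times\mathcal{T}_n}$ to be a cocycle relative to $\pi_1({\rm X})$, and condition~\eqref{axcond2} for $a=(a_1,a_2)$ produces $\theta(a,z_\nu)=\theta(a_1,z_\nu)+\theta(a_2,z_\nu)$, whose second summand is a cross term; so the cross-vanishing must be established before, not after, the diagonal reduction.
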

	
	\begin{proof}
		Let $(\mathcal{R}=\mathcal{R}_n\oplus \mathcal{R}_m,{\rm X})$ stand for $(\mathcal{J}_n\oplus \mathcal{J}_m,{\rm X})$ or $(\mathcal{T}_n\oplus \mathcal{T}_m,{\rm X})$. Take $a\in {\rm X}$,  $a=a^n + a^m$ where $a^n$, $a^m$ are idempotents 
		of $\mathcal{R}_n$ and $\mathcal{R}_m$, respectively. We claim that $\mathcal{R}_0^a=(\mathcal{R}_n)_0^{a^n}\oplus (\mathcal{R}_m)_0^{a^m}=0$. For $\mathcal{T}_n\oplus \mathcal{T}_m$, it follows from the proof of Proposition~\ref{p:tn}; for $\mathcal{J}_n\oplus \mathcal{J}_m$, it suffices to observe that, for any  idempotent $a\in \mathcal{J}_n$, $a$ is of the form $a=e+\sum_{i=1}^{n-1}\alpha_i n_i$ and satisfies $(\mathcal{J}_n)_0^a=0$.  Let $\theta\in\Za(\mathcal{R},{\rm V};{\rm X})$ for some vector space $\rm V$. By condition~\eqref{axcond2}, $\theta(\mathcal{R}_n,\mathcal{R}_m)=0$. The rest of the proof follows from Corollary~\ref{c:jn} and Proposition~\ref{p:tn}.
	\end{proof}
	
	We include now some straightforward considerations which will be useful for our last result.
	
	\begin{rem}\label{r:easy}
			\hfill
			\begin{enumerate}
				\item Any axial algebra is perfect.
				\item An algebra $\A=\A_1\oplus\A_2$ is  $(\mathcal{F},\star)$-axial if and only if $\A_1$ and $\A_2$ are both $(\mathcal{F},\star)$-axial algebras.
			\end{enumerate}
	\end{rem}
	%
	%
	
	\begin{lm}\label{l:nilp}
		Let $\A$ be a Jordan algebra. 
		If there exists a pair $\{e,x\}\subset \A$ such that $e$ is idempotent, $x\neq 0$ belongs to the $1$-eigenspace of $e$, and there exists a linear complement $L$ of $\mispan\{e,x\}$ in $\A$ which is an ideal and satisfies $x^2\in L$, then $\A$ is not axial.
	\end{lm}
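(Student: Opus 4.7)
The plan is to argue by contradiction: assume $(\A,X)$ is axial for some generating set of axes $X$, and derive an inconsistency after passing to the quotient algebra $\A/L$, which is well-defined because $L$ is an ideal.

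A brief preparatory step is to verify that $e$ and $x$ are linearly independent. Otherwise $x=\a e$ for some scalar $\a$, and then $x^{2}=\a^{2}e$ lies simultaneously in $\mispan\{e\}\subseteq\mispan\{e,x\}$ and in $L$, forcing $\a^{2}e=0$ and hence $\a=0$, which contradicts $x\neq 0$. Consequently $\A/L$ is $2$-dimensional with basis $\{\bar e,\bar x\}$, equipped with the complete multiplication table
\[\bar e\,\bar e=\bar e,\qquad \bar e\,\bar x=\bar x,\qquad \bar x\,\bar x=0,\]
the last relation coming from $x^{2}\in L$.

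The crux is to determine every idempotent of $\A/L$. Expanding $(\a\bar e+\b\bar x)^{2}=\a^{2}\bar e+2\a\b\bar x$ and matching with $\a\bar e+\b\bar x$ yields the system $\a^{2}=\a$, $2\a\b=\b$, whose only non-zero solution is $(\a,\b)=(1,0)$. Therefore $\bar e$ is the \emph{unique} non-zero idempotent of $\A/L$.

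To finish, the canonical projection $\pi\colon\A\to\A/L$ is a surjective algebra homomorphism, so $\pi(X)$ must generate $\A/L$. Since every element of $X$ is idempotent, each element of $\pi(X)$ is either $0$ or a non-zero idempotent of $\A/L$; by the previous step this forces $\pi(X)\subseteq\{0,\bar e\}$. But the subalgebra generated by $\{0,\bar e\}$ is contained in $F\bar e$, a proper subspace of $\A/L$ that does not contain $\bar x$, contradicting the generation of $\A/L$ by $\pi(X)$. Hence no axial structure on $\A$ exists. I do not foresee real obstacles: the whole argument is carried out in the $2$-dimensional quotient, the only mildly subtle point being the linear-independence check at the start. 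The Jordan hypothesis never explicitly enters the reasoning, but provides the natural habitat (Peirce decomposition of idempotents in the fusion law $\mathcal{J}(\frac{1}{2})$) in which the statement is being applied.
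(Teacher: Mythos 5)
Your proof is correct and is essentially the paper's argument in disguise: the paper computes directly in $\A$ that any idempotent $a=\lambda e+\mu x+\ell$ with $\ell\in L$ must satisfy $\lambda^2=\lambda$ and $2\lambda\mu=\mu$, hence $\mu=0$, which is exactly your idempotent computation carried out in the quotient $\A/L$. Passing to the quotient is a harmless repackaging that, if anything, makes the final step (the axes cannot generate, since they all land in $F\bar e$) slightly more explicit than the paper's terse concluding sentence.
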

	
	
	\begin{proof}
		Let $\{e_1,\dots,e_{r}\}$ be a basis of $L$, and suppose that $a=\lambda e+\mu x +\sum_{i=1}^{r}\alpha_i e_i$ is an axis. Then $a^2=a$ and, consequently, $\lambda^2=\lambda$ and $2\lambda\mu=\mu$. This yields $\mu=0$. It follows that there cannot exist a set of axes generating $\A$. 
	\end{proof}
	
	\begin{lm}\label{l:suf}
		Let $\A$ be a Jordan algebra, 
		and let $\A=\A_s \dot{+} \A_n$ be its decomposition into semisimple and nilpotent parts. If there exist a set of generators of $\A_n$, $\{x_1,\dots,x_r\}$, such that $x_i$ belongs to the $1/2$-eigenspace of an axis $a(x_i)$ of $\A_s$ and that $x_i^2=0$ for all $i=1,\dots,r$, then $\A$ is an axial algebra.
	\end{lm}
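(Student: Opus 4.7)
The plan is to construct an explicit set of $\mathcal{J}(\frac{1}{2})$-axes generating $\A$. Since $\A_s$ is a finite-dimensional semisimple Jordan algebra in characteristic not $2$, by the classification used earlier in the paper it is generated by idempotents; fix such a generating set of idempotents $Y \subseteq \A_s$. Every idempotent of a Jordan algebra is a $\mathcal{J}(\frac{1}{2})$-axis via the Peirce decomposition, so $Y$ is a set of $\mathcal{J}(\frac{1}{2})$-axes.

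The key observation is that, for each $i \in \{1,\dots,r\}$, the element $b_i \coloneqq a(x_i) + x_i$ is idempotent. Indeed, since $a(x_i)\, x_i = \tfrac{1}{2}x_i$ and $x_i^2 = 0$,
\[
b_i^2 \;=\; a(x_i)^2 + 2\,a(x_i)\,x_i + x_i^2 \;=\; a(x_i) + x_i + 0 \;=\; b_i.
\]
Invoking once more the Peirce decomposition of a Jordan idempotent, each $b_i$ is a $\mathcal{J}(\frac{1}{2})$-axis of $\A$.

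I then propose the set $Z \coloneqq Y \cup \{b_1,\dots,b_r\}$ as a generating set of $\mathcal{J}(\frac{1}{2})$-axes for $\A$. Denote by $\langle Z\rangle$ the subalgebra of $\A$ generated by $Z$. Since $Y \subseteq Z$ generates $\A_s$, we have $\A_s \subseteq \langle Z\rangle$; in particular $a(x_i) \in \langle Z\rangle$, whence $x_i = b_i - a(x_i) \in \langle Z\rangle$ for each $i$. Using the hypothesis that $\{x_1,\dots,x_r\}$ generates $\A_n$ (together with the products inside $\A$ coming from $\A_s \subseteq \langle Z\rangle$), we conclude $\A_n \subseteq \langle Z\rangle$, and therefore $\langle Z\rangle = \A_s \dot{+} \A_n = \A$. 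Hence $(\A, Z)$ is a $\mathcal{J}(\frac{1}{2})$-axial algebra.

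The only subtlety, and the main thing to be careful about, is the precise sense in which $\{x_i\}$ generates $\A_n$: since $\A_n$ is the nilpotent radical and is stable under multiplication by $\A_s$, the natural reading is that $\A_n$ is generated as a subalgebra of $\A$ (equivalently, closing under products of $\A$, which may involve elements of $\A_s$). Because $\A_s \subseteq \langle Z\rangle$, this reading is harmless for the argument. If instead one insists that $\{x_i\}$ alone generates $\A_n$ (without $\A_s$-action), the conclusion follows a fortiori.
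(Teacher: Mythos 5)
Your proof is correct and follows essentially the same route as the paper: the paper likewise takes a generating set of axes ${\rm X}_s$ for $\A_s$ (chosen to contain the $a(x_i)$) and adjoins the idempotents $a(x_i)+x_i$, whose idempotency follows exactly from your computation $b_i^2=a(x_i)^2+2a(x_i)x_i+x_i^2=a(x_i)+x_i$. Your write-up merely makes explicit the idempotency check and the generation argument that the paper leaves implicit.
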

	
	\begin{proof}
		The semisimple component $\A_s$ is trivially axial for being a direct sum of simple Jordan algebras. Take a set $\rm X_s$ of generating axes such that $a(x_i)\in \rm X_s$ for all $i=1,\dots,r$. Then, ${\rm X_s} \cup \{a(x_i)+x_i\}_{i=1}^{r}$ is a generating set of axes for $\A$. 
	\end{proof}
	
	\begin{thm}\label{th:j4}
		Let $\A$ be an axial Jordan algebra of dimension $n\leq 4$ over an algebraically closed field $F$. 
		Then, every $\mathcal{J}(\frac{1}{2})$-axial non-split central extension of $(\A,{\rm X})$, for any set $\rm X$ of generating axes of $\A$, is a Jordan algebra.
	\end{thm}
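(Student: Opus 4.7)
The approach is to first classify all axial Jordan algebras of dimension $n\leq 4$ over an algebraically closed field $F$ of characteristic not $2$, and then verify the theorem for each algebra on the resulting short list by invoking one of the preceding results.

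To carry out the classification, I would begin with the Wedderburn decomposition $\A=\A_s\dot{+}\A_n$, where $\A_s$ is semisimple and $\A_n$ is the Jacobson radical. Over an algebraically closed field of characteristic not $2$, and in dimension at most $4$, the simple Jordan summands of $\A_s$ are restricted to $F$, the spin factor $J(F^2,q)$ (dimension $3$), and $J(F^3,q)\cong M_2(F)^+$ (dimension $4$). Lemma~\ref{l:nilp} excludes those radicals containing a vector in the Peirce $1$-eigenspace of some idempotent with the associated ideal complement, while Lemma~\ref{l:suf} together with Remark~\ref{r:easy} confirms axiality of the surviving candidates. A finite enumeration should yield that every axial Jordan algebra of dimension $\leq 4$ is a direct sum whose summands belong to the list $\{F,\ \mathcal{J}_k\ (2\le k\le 4),\ \mathcal{T}_k\ (3\le k\le 4),\ J(F^2,q),\ M_2(F)^+\}$.

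With the classification in hand, I would handle each indecomposable axial algebra by citing the appropriate earlier result: Proposition~\ref{p:sn} rules out non-split central extensions of $\mathcal{S}_n$, in particular of $F$; Corollary~\ref{c:jn} covers $\mathcal{J}_k$; Proposition~\ref{p:tn} covers $\mathcal{T}_k$; and the proof of Theorem~\ref{th:j} is formal in the underlying scalar field and therefore transfers from $\mathbb{C}$ to any algebraically closed field of characteristic not $2$, handling the simple summands. For a decomposable algebra $\A=\A_1\oplus\A_2$ with generating set ${\rm X}$ lying in $\A_1\cup\A_2$, I would generalize Corollary~\ref{c:ds} as follows: whenever every generating axis $a$ satisfies $\A_0^a\subseteq\A_j$ for the appropriate summand index $j$, condition~\eqref{axcond2} applied with $\lambda=1$ and $\mu\in\{0,\tfrac{1}{2}\}$ forces $\theta(\A_1,\A_2)$ to be a coboundary, so $\A_\theta=(\A_1)_{\theta_1}\oplus(\A_2)_{\theta_2}$ and the problem reduces to the indecomposable case already dispatched.

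The main obstacle will be the case of direct sums involving $F=\mathcal{S}_1$: when $\A_1=F$ and $\A_2\neq 0$, a generating axis of the $F$-summand has $0\in\Spec$ on the full algebra and Corollary~\ref{c:ds} as stated does not apply. This must be handled by combining the Proposition~\ref{p:sn}-style argument on the $F$ summand with a direct verification that the cross terms of $\theta$ are coboundaries, exploiting that any axis decomposes into orthogonal idempotents from each summand. A secondary technical point is to check the Jordan identity $\theta(xy,x^2)=\theta(x,yx^2)$ in any case not directly covered by Propositions~\ref{p:jn} or~\ref{p:tn}; because the dimension is bounded by $4$, this reduces to a routine finite linear computation of exactly the type carried out in the proof of Proposition~\ref{p:jn}.
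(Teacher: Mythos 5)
Your overall strategy---classify the axial Jordan algebras of dimension at most $4$ and then dispatch each one by an earlier result---is the same as the paper's, but your classification claim is false, and this is a genuine gap. It is not true that every axial Jordan algebra of dimension $\leq 4$ over an algebraically closed field decomposes as a direct sum of algebras from the list $\{F,\ \mathcal{J}_k,\ \mathcal{T}_k,\ J(F^2,q),\ M_2(F)^+\}$. The Wedderburn splitting $\A=\A_s\dot{+}\A_n$ does not force the radical to interact with only one simple summand at a time, nor to interact with a given summand in the $\mathcal{J}_k$/$\mathcal{T}_k$ pattern. Concretely, the paper's proof must treat (among others) the algebra $\mathfrak{J}_{25}$ of Martin's list, with products $e_1e_1=e_1$, $e_2e_2=e_2$, $e_1n_1=e_2n_1=\frac{1}{2}n_1$, $e_1n_2=n_2$, $n_1n_1=n_2$: here the radical element $n_1$ lies in the $\frac{1}{2}$-eigenspace of \emph{both} orthogonal idempotents and its square lands in the $1$-eigenspace of $e_1$, so the algebra is indecomposable and is none of your building blocks. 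The algebras $\mathfrak{J}_{53}$ and $\mathfrak{J}_{59}$ (and several entries of the paper's Table~\ref{t:j}, e.g.\ $\mathfrak{T}_{10}$, $\mathfrak{J}_{7}$, $\mathfrak{J}_{48}$, $\mathfrak{J}_{49}$) are further counterexamples. These are exactly the hard cases; your reduction to direct sums of standard pieces fails before reaching them, and your closing remark that any leftover case is ``a routine finite linear computation'' cannot substitute for identifying which algebras actually occur.

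You also miss the main labour-saving device of the paper: a $\mathcal{J}(\frac{1}{2})$-axial central extension of an axial algebra generated by two or three primitive axes is again a two- or three-generated primitive axial algebra of Jordan type, so the classifications of Hall--Rehren--Shpectorov and Gorshkov--Staroletov apply directly and show the extension is Jordan. This disposes of most of the list at once (everything in Table~\ref{t:j}), leaving only $\mathcal{S}_4$, the direct sums covered by Proposition~\ref{p:sn} and Corollary~\ref{c:ds}, and the three genuinely exceptional algebras $\mathfrak{J}_{25}$, $\mathfrak{J}_{53}$, $\mathfrak{J}_{59}$, for which the paper computes the cocycle conditions~\eqref{axcond1} and~\eqref{axcond2} explicitly and then checks $\theta(xy,x^2)=\theta(x,yx^2)$. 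To repair your argument you would need to replace your (incorrect) structural classification with the actual enumeration from the known lists of Jordan algebras of dimension $\leq 4$, filtered by Remark~\ref{r:easy} and Lemma~\ref{l:nilp}, and then carry out the case analysis for the indecomposable algebras that are not of the forms $\mathcal{S}_n$, $\mathcal{J}_n$, $\mathcal{T}_n$ or their direct sums.
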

	
	\begin{proof} 
		Employing Remark~\ref{r:easy} and Lemma~\ref{l:nilp}, we select from the lists of $n$-dimensional Jordan algebras in~\cite{KS,martin}, for $n\leq 4$, those who are axial. It is obvious that there are not non-split $\mathcal{J}(\frac{1}{2})$-axial central extensions of the axial Jordan algebra of dimension $1$. Also, by~\cite{hrs2,gs}, we know that the theorem holds true for every axial Jordan algebra generated by two or three primitive axes; then we use the help of Lemma~\ref{l:suf} to identify them. Indeed, in the next Table~\ref{t:j} we display such algebras $\mathfrak{J}$ together with a set of primitive generating axes ${\rm X}$. We follow the notation of~\cite{martin}.
		\newpage
		
		\begin{longtable}{|c|c|}
			\hline
			$\A$  & ${\rm X}$  \\
			\hline \hline $\mathfrak{F}_1\oplus\mathfrak{F}_1$ & $\{e_1,e_2\}$ \\
			\hline $\mathfrak{B}_2$ & $\{e_1,e_1+n_1\}$ \\
			\hline $\mathfrak{F}_1\oplus\mathfrak{F}_1\oplus\mathfrak{F}_1$ & $\{e_1,e_2,e_3\}$ \\
			\hline $\mathfrak{B}_2\oplus\mathfrak{F}_1$ & $\{e_1,e_1+n_1,e_2\}$ \\
			\hline $\mathfrak{T}_5$ & $\{e_1,1/2(e_1+e_2+e_3)\}$ \\
			\hline $\mathfrak{T}_7$ & $\{e_1,e_1+n_1,e_1+n_2\}$ \\
			\hline $\mathfrak{T}_8$ & $\{e_1,e_1+n_1+n_2\}$ \\
			\hline $\mathfrak{T}_{10}$ & $\{e_1+n_1,e_2+n_1\}$ \\
			\hline $\mathfrak{J}_{1}$  & $\{e_1,1/2(e_1+e_2+e_3),e_4\}$ \\ 
			\hline $\mathfrak{J}_{2}$  & $\{e_1+e_3,e_1+e_4,e_2\}$  \\
			\hline $\mathfrak{J}_{7}$  & $\{e_1+n_1,e_2+n_1,e_3\}$  \\
			\hline $\mathfrak{J}_{9}$  & $\{e_1,1/2(e_1+e_2+e_3),e_1+n_1\}$  \\
			\hline $\mathfrak{J}_{16}$  & $\{e_1+n_1,e_1+n_2,e_2\}$  \\
			\hline $\mathfrak{J}_{18}$  & $\{e_1+n_1,e_1+n_2,e_2\}$  \\
			\hline $\mathfrak{J}_{23}$  & $\{e_1,e_1+n_1+n_2,e_2\}$  \\
			\hline $\mathfrak{J}_{48}$  & $\{e_1,e_1+n_1+n_3,e_1+n_2\}$  \\
			\hline $\mathfrak{J}_{49}$  & $\{e_1,e_1+n_1+n_3,e_1+n_2\}$  \\
			\hline 
			
			\caption{Jordan algebras of dimension $n=2,3,4$ generated by two or three primitive axes.}
			\label{t:j}
		\end{longtable}

		It remains to consider the following algebras:
		
		\begin{longtable}{llllll}
			$\mathfrak{T}_9\equiv\mathcal{T}_3$ & $\mathfrak{J}_{3}\equiv \mathcal{S}_4$ & $\mathfrak{J}_{6}\equiv\mathcal{J}_2\oplus\mathcal{J}_1\oplus\mathcal{J}_1$ & $\mathfrak{J}_{12}\equiv\mathcal{J}_3\oplus\mathcal{J}_1$ & $\mathfrak{J}_{13}\equiv\mathcal{J}_2\oplus\mathcal{J}_2$ & $\mathfrak{J}_{24}\equiv\mathcal{T}_3\oplus \mathcal{T}_1$ \\
			$\mathfrak{J}_{25}$ & $\mathfrak{J}_{33}\equiv\mathcal{J}_4$ & $\mathfrak{J}_{53}$ & $\mathfrak{J}_{58}\equiv\mathcal{T}_4 $ & $\mathfrak{J}_{59}$. &
		\end{longtable}
		
		
		\noindent But Proposition~\ref{p:sn} and Corollary~\ref{c:ds} ensure that we only need to study the algebras $\mathfrak{J}_{25}$, $\mathfrak{J}_{53}$ and $\mathfrak{J}_{59}$. We include their tables of products for the sake of completeness.
		\begin{longtable}{lllllll}
			$\mathfrak{J}_{25}$  & $e_1e_1=e_1$ & $e_1n_1= \frac{1}{2} n_1$ &  $e_1n_2=n_2$ & $e_2e_2=e_2$ & $e_2n_1= \frac{1}{2} n_1$ & $n_1n_1=n_2$  \\
			$\mathfrak{J}_{53}$  & $ee=e$ &  $en_1= \frac{1}{2} n_1$ & $en_2=n_2$ & $n_1n_1=n_2+n_3$& & \\
			$\mathfrak{J}_{59}$  & $ee=e$ &  $en_1= n_1$ & $en_2=\frac{1}{2} n_2$ & $en_3=\frac{1}{2} n_3$ & $n_2n_3=n_1$ & $n_3n_3=n_1$
		\end{longtable}

		Let us begin studying $\mathfrak{J}_{25}$. We find three different types of idempotents: the unity $e=e_1+e_2$ and
		\begin{longtable}{lcl}
			$a(\a)$&$=$&$e_1+\a n_1 - \a^2n_2,$\\
			$b(\b)$&$=$&$ e_2 + \b n_1 + \b^2n_2.$
		\end{longtable}
		\noindent The respective eigenspace decompositions of $a(\a)$ and $b(\b)$ are:
		\begin{longtable}{lcl}
			$(\mathfrak{J}_{25})_1^{a(\a)}$&$=$&$\mispan\{a(\a),n_2\};$ \\
			$(\mathfrak{J}_{25})_0^{a(\a)}$&$=$&$F (e_2-\a n_1 + \a^2 n_2);$ \\
			$(\mathfrak{J}_{25})_{1/2}^{a(\a)}$&$=$&$F (n_1-2\a n_2);$\\
			
			$(\mathfrak{J}_{25})_1^{b(\b)}$&$=$&$Fb(\b);$ \\
			$(\mathfrak{J}_{25})_0^{b(\b)}$&$=$&$\mispan \{e_1-\b n_1, n_2\};$ \\
			$(\mathfrak{J}_{25})_{1/2}^{b(\b)}$&$=$&$F (n_1+2\b n_2).$
		\end{longtable}
		
		As usual, let $\rm X$ be a set of idempotents which generate $\mathfrak{J}_{25}$, and let $\theta\in\Za(\mathfrak{J}_{25},{\rm V};{\rm X})$ for some vector space $\rm V$. Suppose first that $e\in {\rm X}$. Condition~\eqref{axcond2} tells immediately that \[\theta(e_1,e_2)=\theta(e_2,n_2)=\theta(n_1,n_2)=\theta(n_2,n_2)=\theta(e,0)=0;\] also, that \[\theta(n_1,n_1)=\theta(e,n_2)=\theta(e_1,n_2)\] and \[\theta(e_1,n_1)=\theta(e,\frac{1}{2}n_1),\] from which \[\theta(e_1,n_1)=\theta(e_2,n_1).\]
		
		On the contrary, assume now that $e\notin {\rm X}$. It is not possible to generate $\mathfrak{J}_{25}$ only with axes of type $a$ or of type $b$, so we can take $a(\a), b(\b) \in {\rm X}$. Condition~\eqref{axcond1} ensures that $\theta(b(\b),n_2)=\theta(b(\b),e_1-\b n_1)=0$; on the other hand, according to condition~\eqref{axcond2}, $\theta(n_2,n_2)=\theta(n_2,n_1-2\a n_2)=\theta(a(\a),0)=0$, $\theta(n_1-2\a n_2,n_1-2\a n_2)=\theta(a(\a),n_2)$ and $\theta(e_1-\b n_1,n_1+2\b n_2)=2\theta(b(\b),\frac{1}{2}n_1+\b n_2)$. Putting this information together, again we obtain \[\theta(e_1,e_2)=\theta(e_2,n_2)=\theta(n_1,n_2)=\theta(n_2,n_2)=0,\] \[\theta(n_1,n_1)=\theta(e_1,n_2)\] and \[\theta(e_1,n_1)=\theta(e_2,n_1).\]
		
		It is now routine to apply these equalities to the expressions of $\theta(xy,x^2)$ and $\theta(x,yx^2)$ to check that they coincide for every $x,y\in \mathfrak{J}_{25}$. 
		
		\
		
		The two last cases will be dealt with similarly, although they are simpler. That is why we will omit the details in our exposition. 
		
		Let $a$ be an idempotent of $\mathfrak{J}_{53}$; then $a$ is of the form $a=e+\a n_1 - \a^2 n_2 +\a^2 n_3$. We have:
		\begin{longtable}{lcl}
			$(\mathfrak{J}_{53})_1^a$&$=$&$\mispan\{a,n_2\};$ \\
			$(\mathfrak{J}_{53})_0^a$&$=$&$F n_3;$ \\
			$(\mathfrak{J}_{53})_{1/2}^a$&$=$&$F (n_1-2\a n_2+2\a n_3).$
		\end{longtable}
		\noindent Let $\theta\in \Za(\mathfrak{J}_{53},{\rm V};{\rm X})$ for any set ${\rm X}$ of generating axes and for some vector space $\rm V$. Then, employing conditions~\eqref{axcond1} and~\eqref{axcond2} we obtain that $\theta=\delta f$ for the linear map
		\begin{longtable}{rcl}
			$f\colon \mathfrak{J}_{53}$ & $\to $&${\rm V}$ \\
			$e$ & $\longmapsto$&$ \theta(e,e),$ \\
			$n_1$ & $\longmapsto$&$ 2\theta(e,n_1),$ \\
			$n_2$ & $\longmapsto$&$ \theta(e,n_2),$ \\
			$n_3$ & $\longmapsto$&$ \theta(n_1,n_1)-\theta(e,n_2),$ 
		\end{longtable}
		\noindent and therefore the extension $(\mathfrak{J}_{53})_{\theta}$ is split.
		
		Finally, let $a\in \mathfrak{J}_{59}$ be an idempotent; it will satisfy $a=e-\b(2\a+\b)n_1 +\a n_2 + \b n_3$. Then:
		\begin{longtable}{lcl}
			$(\mathfrak{J}_{59})_1^a$&$=$&$\mispan\{a,n_1\};$ \\
			$(\mathfrak{J}_{59})_0^a$&$=$&$0;$ \\
			$(\mathfrak{J}_{59})_{1/2}^a$&$=$&$\mispan\{ 2\b n_1-n_2, 2(\a+\b)n_1- n_3\}.$
		\end{longtable}
		\noindent Let $\theta\in \Za(\mathfrak{J}_{59},{\rm V};{\rm X})$ for any set ${\rm X}$ of generating axes and for some vector space $\rm V$. Condition~\eqref{axcond2} allows us to verify that $\theta(xy,x^2)=\theta(x,yx^2)$ for any two arbitrary $x,y\in\mathfrak{J}_{59}$, so the extension $(\mathfrak{J}_{59})_{\theta}$ is Jordan.
		
	\end{proof}
	
	Theorems~\ref{th:j} and~\ref{th:j4},  together with Propositions~\ref{p:sn} and~\ref{p:tn} and Corollary~\ref{c:jn}, lead us to make the following conjecture.

	\begin{conj}
		Let $F$ be a field of characteristic different from $2$. Every $\mathcal{J}(\frac{1}{2})$-axial central extension of a Jordan algebra over $F$ generated by idempotents, with respect to any set of such generating idempotents, is a Jordan algebra.
	\end{conj}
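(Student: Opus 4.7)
The plan is to prove $\theta(xy,x^2)=\theta(x,yx^2)$ for all $x,y\in\A$, since together with the Jordan identity already valid in $\A$ this is exactly what makes $\A_\theta$ Jordan. Setting $\Phi(x,y):=\theta(xy,x^2)-\theta(x,yx^2)$, the task is to show $\Phi\equiv 0$.

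First I would reduce to a multilinear identity. As $\Phi$ is cubic in $x$ and linear in $y$, full polarisation in $x$ (valid since $\charac F\neq 2$) yields an equivalent trilinear form whose vanishing on any spanning set of $\A$ implies $\Phi\equiv 0$. Since $\A$ is generated by its set ${\rm X}$ of idempotents, such a spanning set is given by monomials in the elements of ${\rm X}$, so by multilinearity it suffices to verify the polarised identity on these monomials.

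For the base cases, $\Phi(e,y)=\theta(ey,e)-\theta(e,ey)=0$ for every $e\in {\rm X}$ by symmetry of $\theta$ and commutativity of $\A$. The first substantive case is the coefficient of $\alpha$ in $\Phi(e+\alpha f,y)$ for distinct $e,f\in {\rm X}$, which reads
\[2\theta(ey,ef)+\theta(fy,e^2)=2\theta(e,y(ef))+\theta(f,ye^2).\]
I would attack this (and its higher-degree analogues) by decomposing $y$ and $f$ via the Peirce decomposition relative to $e$ and then applying $\theta\in\Za(\A,{\rm V};{\rm X})$: conditions~\eqref{axcond1} and~\eqref{axcond2} together rewrite each $\theta(u,v)$ appearing, in the cases $(u,v)\in\A_\lambda^e\times\A_\mu^e$ with $0\notin\lambda\star\mu$, as a known multiple of some $\theta(e,-)$. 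After this reduction the Jordan identity available in $\A$ matches both sides.

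The main obstacle is the gap left by the fusion law: since $0\in\tfrac12\star\tfrac12$, condition~\eqref{axcond2} gives no direct information about $\theta$ on $\A_{1/2}^e\times\A_{1/2}^e$, yet precisely such pairs arise in the polarised identity whenever the $x_i$ involve $1/2$-components relative to $e$. The proposed remedy is to bring a second axis $f\in{\rm X}$ into play: a $1/2$-eigenvector for $e$ generally decomposes relative to $f$ into components in $\A_1^f$, $\A_0^f$ and $\A_{1/2}^f$, and the cocycle conditions relative to $f$ then constrain the offending $\theta$-values. Showing that the combined constraints from all axes in ${\rm X}$ force $\theta$ to be cohomologous to a Jordan $2$-cocycle is the crux, and parallels the direct computations carried out in Propositions~\ref{p:sn}, \ref{p:jn}, \ref{p:tn} and Theorems~\ref{th:j}, \ref{th:j4}; the difficulty of producing a uniform argument valid for arbitrary such $\A$—which would presumably need a structural description of $\{\delta f : f\colon\A\to{\rm V}\}$ inside $\Za(\A,{\rm V};{\rm X})$—is precisely what leaves the statement a conjecture.
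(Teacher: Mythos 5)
This statement is stated in the paper as a conjecture: the authors offer no proof of it, and your proposal does not supply one either. What you have written is a correct reduction of the problem (showing that $\A_\theta$ is Jordan is equivalent to the Jordan-cocycle identity $\theta(xy,x^2)=\theta(x,yx^2)$, which can be polarised since $\charac F\neq 2$ and then tested on monomials in the generating idempotents) together with a correct identification of the obstruction: because $\tfrac12\star\tfrac12=\{1,0\}$ in $\mathcal{J}(\tfrac12)$, condition~\eqref{axcond2} imposes no constraint on $\theta$ restricted to $\A_{1/2}^a\times\A_{1/2}^a$, and these are exactly the pairs that survive in the polarised identity. Your proposed remedy --- playing the Peirce decompositions of several axes against one another to pin down the missing values of $\theta$ --- is precisely the device the authors use in their case-by-case verifications (Propositions~\ref{p:sn}, \ref{p:jn}, \ref{p:tn}, Theorems~\ref{th:j} and~\ref{th:j4}), but in each of those results it is carried out by explicit computation in a fixed algebra with a known idempotent structure. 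You give no argument that the constraints coming from all axes in ${\rm X}$ always suffice for an arbitrary Jordan algebra generated by idempotents, and you concede as much in your final sentence.

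Concretely, the gap is the step ``the combined constraints from all axes in ${\rm X}$ force $\theta$ to be cohomologous to a Jordan $2$-cocycle'': nothing in the proposal shows that a $\tfrac12$-eigenvector for one axis must have enough nonzero components in the $1$- or $0$-eigenspaces of other axes for conditions~\eqref{axcond1} and~\eqref{axcond2} to determine $\theta$ on $\A_{1/2}^a\times\A_{1/2}^a$ up to coboundaries, and no structural description of the space $\{\delta f\}$ inside $\Za(\A,{\rm V};{\rm X})$ is provided. Since that is the entire content of the conjecture, the proposal should be read as a plausible plan of attack consistent with the paper's evidence, not as a proof.
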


	\section{Acknowledgements}
	
	The first part of the work was supported by the Austrian Science Foundation FWF, grant P 33811-N, by Agencia Estatal de Investigación (Spain),
	grant PID2020-115155GB-I00 (European FEDER support included, UE) and by Xunta de Galicia, grant ED431C 2019/10 (European FEDER support included, UE); by the Junta de Andaluc\'{\i}a  through projects UMA18-FEDERJA-119  and FQM-336 and  by the Spanish Ministerio de Ciencia e Innovaci\'on   through project  PID 2019-104236GB-I00,  all of them with FEDER funds;
	FCT   UIDB/MAT/00212/2020 and UIDP/MAT/00212/2020.
	The second part of the work was supported by RSF  22-71-10001.

\end{document}